\newtheorem{introthm}{Theorem}
\newtheorem{thm}{Theorem}[section]
\newtheorem{lemma}[thm]{Lemma}
\newtheorem{prp}[thm]{Proposition}
\newtheorem{co}[thm]{Corollary}
\theoremstyle{definition}
\newtheorem{defn}[thm]{Definition}
\newtheorem{expl}[thm]{Example}
\newtheorem{rmk}[thm]{Remark}
\newtheorem{ques}[thm]{Question}
\newlist{enumthm}{enumerate}{1}  
\setlist[enumthm,1]{label=\textup{(\roman*)}}
\renewcommand{\phi}{\varphi}
\newcommand{\eps}{\varepsilon}
\renewcommand{\rho}{\varrho}
\renewcommand{\geq}{\geqslant}
\renewcommand{\leq}{\leqslant}
\DeclareMathOperator{\conv}{conv}
\DeclareMathOperator{\Sym}{Sym}
\DeclareMathOperator{\GL}{GL}
\DeclareMathOperator{\Tr}{Tr}
\DeclareMathOperator{\Gen}{Gens}
\DeclareMathOperator{\Ker}{Ker}
\DeclareMathOperator{\End}{End}
\DeclareMathOperator{\Irr}{Irr}
\DeclareMathOperator{\Lin}{Lin}
\DeclareMathOperator{\id}{id}
\DeclareMathOperator{\Ann}{Ann}
\DeclareMathOperator{\NKer}{NKer}
\DeclareMathOperator{\Syl}{Syl}
\DeclareMathOperator{\Zent}{\mathbf{Z}}  
\DeclareMathOperator{\Cent}{\mathbf{C}}
\DeclareMathOperator{\ord}{\mathbf{o}}
\DeclareMathOperator{\FS}{\nu_2}       
\newcommand{\kk}{\mathds{k}}
\newcommand{\EE}{\mathds{E}}
\newcommand{\con}[1]{\overline{#1}}
\newcommand{\reals}{\mathds{R}}
\newcommand{\compl}{\mathds{C}}
\newcommand{\QQ}{\mathds{Q}}
\newcommand{\GF}[1]{\mathds{F}_{#1}}
\newcommand{\iso}{\cong}
\DeclarePairedDelimiter{\card}{\lvert}{\rvert}
\DeclarePairedDelimiter{\erz}{\langle}{\rangle}
\tikzset{orbitnodes/.style={circle, draw, fill=black,
                          inner sep=0pt, minimum width=4pt}}
\begin{document}
\title[Affine Symmetry Groups of Orbit Polytopes]{Classification of Affine Symmetry Groups\\
of Orbit Polytopes}
\author{Erik Friese}
\email{erik.friese@uni-rostock.de}
\author{Frieder Ladisch}
\thanks{Authors partially supported by the DFG 
(Project: SCHU 1503/6-1).}
\email{frieder.ladisch@uni-rostock.de}
\address{Universität Rostock,
         Institut für Mathematik,         
         18051 Rostock,
         Germany}
\subjclass[2010]{Primary 52B15, Secondary 05E15, 20B25, 20C15}
\keywords{Orbit polytope, group representation, affine symmetry,
         linear group, generic symmetry}
%
%
%

\begin{abstract}
    Let $G$ be a finite group
    acting linearly on a vector space $V$.
    We consider the linear symmetry groups
    $\operatorname{GL}(Gv)$ of orbits $Gv\subseteq V$,
    where  
    the \emph{linear symmetry group} $\operatorname{GL}(S)$ 
    of a subset $S\subseteq V$ 
    is defined as the set of all linear maps
    of the linear span of $S$ which permute $S$.
    We assume that $V$ is the linear span of at least
    one orbit $Gv$.
    We define a set of \emph{generic points}
    in $V$, which is Zariski-open in $V$,
    and show that the groups
    $\operatorname{GL}(Gv)$ for $v$ generic are all isomorphic, 
    and isomorphic to a subgroup of every
    symmetry group $\operatorname{GL}(Gw)$ such that 
    $V$ is the linear span of $Gw$.
    If the underlying characteristic is zero,
    ``isomorphic'' can be replaced by
    ``conjugate in $\operatorname{GL}(V)$''.
    Moreover, in the characteristic zero case, we show 
    how the character of $G$ on $V$ determines this
    generic symmetry group.
    We apply our theory to classify all affine symmetry groups
    of vertex-transitive polytopes,
    thereby answering a question of Babai (1977).
%
%
\end{abstract}

\maketitle  

\section{Introduction}

An \emph{orbit polytope} is the convex hull of an orbit of a finite
group acting affinely on a real vector space.
Orbit polytopes can be seen as some kind
of building block for polytopes with symmetries in general,
and turn up in a number of combinatorial
optimization problems and other applications,
and so have been studied by a number of people
\cite{Babai77,barvivershik88,EllisHS06,mccarthyOZZ03,onn93,robertson84pas,sanyaletal11,zobin94}.

A \emph{representation polytope} is the
convex hull of a finite matrix group over the reals.
This is a special case of an orbit polytope,
since a matrix group $G =G\cdot I$ can be interpreted as the orbit
of the identity matrix under left multiplication.
Representation polytopes
have also received considerable attention, especially
permutation polytopes (the convex hull of a finite group
of permutation matrices)
\cite{guralnickperkinson06,BHNP09,McCarthyOSZ02,HofmannNeeb14}.

In a
previous paper \cite{FrieseLadisch16}, 
we developed a general theory
of affine symmetries of orbit polytopes.
(An \emph{affine symmetry} of some point set
$S\subseteq \reals^d$ is a permutation of $S$
that is the restriction of an affine map of the ambient space.
The \emph{affine symmetry group} of $S$ is the group of 
all affine symmetries of $S$.)
The different symmetry groups of polytopes are interesting
since their knowledge can be useful for practical
computations with polytopes~\cite{bdss09,BDPRS14}.
Moreover, the affine symmetries of a finite point set
in $\reals^d$ can be computed effectively~\cite{bdss09}.

One of the main results of the present paper is 
the classification of all finite groups 
which are isomorphic to 
the affine symmetry group of an orbit polytope
(Theorem~\ref{ithm:class_aff} below).
This answers
an old question of Babai~\cite{Babai77}.
We also classify affine symmetry groups of orbit polytopes
with integer coordinates. 

To obtain these results, we develop a general theory
of linear symmetry groups of orbits,
which is a natural continuation of our
previous paper~\cite{FrieseLadisch16}.
The methods are more algebraic than geometric,
and in particular, we first work over an arbitrary field $\kk $.
Thus let $G$ be a finite group acting linearly on a 
vector space $V$ over some field $\kk $.
(Later, we will specialize to $\kk =\reals$, 
the field of real numbers.)
Consider an orbit $Gv$ of a vector $v\in V$.
Its \emph{linear symmetry group} 
$\GL(Gv)$ is the group of all linear automorphisms of 
the linear span of $ Gv$,
which  map the orbit $Gv$ onto itself.
(When $\kk =\reals$, this is also the linear symmetry
group of the corresponding orbit polytope.)
Clearly, any element of $ G$ yields such a symmetry.
It depends on the abstract group $G$,
on the concrete action of $G$ on $V$ and on the choice of $v$, 
whether there are other linear symmetries or not.

To motivate the next definition,
let us look at a (very simple) example.
Let $G= C_4=\erz{t}$, the cyclic group of order $4$,
act on $V=\reals^2$, such that 
$t$ acts as a rotation
by a right angle.
Then the orbit of every point except $v=0$
consists of four points forming a square, 
and the linear symmetry group is always isomorphic
to 
the dihedral group 
$D_4$ of order $8$ (Figure~\ref{fig:rot4}).
Note that the different orbits of $G=\erz{t}$, 
as $v$ varies, do not 
admit exactly \emph{the same} symmetries 
(there are reflections in different lines).
But if we identify the
points of an orbit with the corresponding group elements,
then the linear symmetry groups of all 
orbits induce the same permutations on $G$.

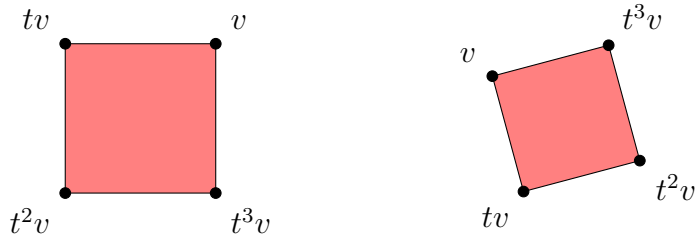
\begin{figure}
  \begin{tikzpicture}[scale=0.7]
    \newcommand{\startangle}{45}
    \newcommand{\radius}{2}
    \filldraw[fill=red!50]
      (\startangle:\radius)  node[label= 
      \startangle:{$v$},orbitnodes]{}
      \foreach \i/\label in {1/t,2/t^2,3/t^3}{
        -- (90*\i + \startangle:\radius) 
            node[label=90*\i + \startangle: $\label v$, orbitnodes] {}
      }
      -- cycle;
    \begin{scope}[xshift=8cm]
        \renewcommand{\startangle}{150}
        \renewcommand{\radius}{1.6}
        \filldraw[fill=red!50]
          (\startangle:\radius)  node[label= 
          \startangle:{$v$},orbitnodes]{}
          \foreach \i/\label in {1/t,2/t^2,3/t^3}{
             -- (90*\i + \startangle:\radius) 
                 node[label=90*\i + \startangle: $\label v$, 
                 orbitnodes] {}
          }
         -- cycle;
    \end{scope}
  \end{tikzpicture}
  \caption{Two orbits of 
             the group $G=\erz{t}$
             of rotations preserving a square.
          }
  \label{fig:rot4}  
\end{figure}

In general, instead of the linear symmetries
of an orbit,
we will consider the permutations of the group $G$
which correspond to linear symmetries.
We call a permutation $\pi \colon G \to G$ an 
\emph{orbit symmetry with respect to $v$} if there is an 
$\alpha \in \GL(V)$ such that 
$\alpha(gv) = \pi(g) v$ for all $g \in G$,
and we write  $\Sym(G,v)$ for the set 
of all orbit symmetries with respect to $v$. 
This is a subgroup of the symmetric group
$\Sym(G)$ on $G$.

Using this definition, we can compare
$\Sym(G,v)$ for different $v$.
We usually consider only $v$ such that
$V = \kk Gv$.
In this case, we call $v$ a \emph{generator} of $V$,
and $V$ a \emph{cyclic} $\kk G$-module.
Define
\[ \Gen(V) :=
   \{ v\in V \colon
      V = \kk G v
   \}\,.
\]
The set of generators $\Gen(V)$ is always Zariski-open
in $V$, and thus is 
``almost all'' of $V$, when $\Gen(V)\neq \emptyset$.
For $V$ such that $\Gen(V)\neq \emptyset$,
we set
\[ \Sym(G,V):= 
    \bigcap_{v\in \Gen(V)}
    \Sym(G,v)\,.
\]

\begin{introthm}\label{ithm:gen_pts_zariski}
Let\/ $\kk $ be a field of infinite order,
$G$ a finite group and $V$
a $\kk G$-module.
The set of $v\in \Gen(V)$
such that $\Sym(G,v) \neq \Sym(G,V)$
is Zariski-closed in $\Gen(V)$. 
\end{introthm}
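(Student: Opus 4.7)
The plan is to express the set in question as a finite union of Zariski-closed sets indexed by $\pi \in \Sym(G) \setminus \Sym(G,V)$. First I would observe that $\Sym(G,V) \subseteq \Sym(G,v)$ for every $v \in \Gen(V)$ by definition of $\Sym(G,V)$, so $\Sym(G,v) \neq \Sym(G,V)$ precisely when some $\pi \in \Sym(G) \setminus \Sym(G,V)$ already lies in $\Sym(G,v)$. Since $\Sym(G)$ is finite, it suffices to show that for each fixed $\pi \in \Sym(G)$, the locus
\[
  S_\pi \coloneqq \{v \in \Gen(V) : \pi \in \Sym(G,v)\}
\]
is Zariski-closed in $\Gen(V)$; the set in the theorem is then the finite union of those $S_\pi$ with $\pi \notin \Sym(G,V)$.

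The central step is to reformulate the condition $\pi \in \Sym(G,v)$ as a rank condition on a matrix with entries linear in $v$. Consider the $\kk G$-linear surjection $\phi_v \colon \kk G \to V$ sending $g \mapsto gv$, and let $\widehat\pi \colon \kk G \to \kk G$ denote the $\kk$-linear extension of the permutation $\pi$. A linear map $\alpha \colon V \to V$ with $\alpha(gv) = \pi(g)v$ for all $g$ is precisely a factorisation $\alpha \circ \phi_v = \phi_v \circ \widehat\pi$ through $\phi_v$, and since $\phi_v$ is surjective such an $\alpha$ exists if and only if $\ker \phi_v \subseteq \ker(\phi_v \circ \widehat\pi)$. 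Any such $\alpha$ lies automatically in $\GL(V)$, since its image contains $\{\pi(g)v : g \in G\} = Gv$, which spans $V$.

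Next I would fix a basis of $V$, set $d = \dim V$ and $n = |G|$, and let $M_v$ (respectively $N_v$) be the $d \times n$ matrix whose columns are the vectors $gv$ (respectively $\pi(g)v$), $g \in G$. The entries of both matrices are linear in the coordinates of $v$. Since $\ker\binom{M_v}{N_v} = \ker M_v \cap \ker N_v$ and $\operatorname{rank} M_v = d$ throughout $\Gen(V)$, the inclusion $\ker M_v \subseteq \ker N_v$ is equivalent to $\operatorname{rank}\binom{M_v}{N_v} \leq d$, that is, to the simultaneous vanishing of all $(d+1)\times(d+1)$ minors of the stacked matrix. These minors are polynomial functions of $v$, so $S_\pi$ is the intersection of $\Gen(V)$ with a Zariski-closed subset of $V$, and hence Zariski-closed in $\Gen(V)$.

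The only real obstacle is carrying out the reformulation in the second paragraph correctly; once the orbit-symmetry condition is recast as a kernel inclusion for matrices with polynomial entries, standard semicontinuity of matrix rank finishes the proof. The hypothesis that $\kk$ is infinite plays no essential role in this argument itself, beyond the background fact (already used in the paper to introduce $\Gen(V)$) that the Zariski topology on $V$ is nontrivial.
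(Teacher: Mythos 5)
Your argument is correct, and it proves the key lemma (the paper's Lemma~2.9, that for each fixed $\pi\in\Sym(G)$ the set $S_\pi=\{v\in\Gen(V):\pi\in\Sym(G,v)\}$ is relatively closed in $\Gen(V)$) by a genuinely different route. You recast $\pi\in\Sym(G,v)$ as the kernel inclusion $\ker M_v\subseteq\ker N_v$ and note that, since $\operatorname{rank}M_v=d$ on all of $\Gen(V)$, this inclusion is equivalent to the vanishing of the $(d+1)\times(d+1)$ minors of the stacked matrix $\binom{M_v}{N_v}$, which are global polynomials in $v$. The paper instead covers $\Gen(V)$ by the basic opens $O_f$, where $f=\det(g_1X,\dotsc,g_dX)$, constructs the matrix $A(X)=(\pi(g_1)X,\dotsc,\pi(g_d)X)(g_1X,\dotsc,g_dX)^{-1}$ over the function field $\kk(X)$, and observes that $f(X)\bigl(A(X)gX-\pi(g)X\bigr)$ has polynomial entries whose vanishing cuts out $S_\pi\cap O_f$. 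Your minor condition is more elementary and avoids both the covering and the rational-function bookkeeping; on the other hand, the paper's construction of $A(X)$ is not throwaway scaffolding --- it is reused (Remark~2.10, Lemma~2.14, Lemma~3.1) to identify $A(X)$ with $D_X(\pi)$ and to relate $\Sym(G,V)$ to $\Sym(G,X)$, so the extra machinery pays off later even though it is overkill for Theorem~A alone. The two arguments agree on the reduction step: since $\Sym(G,V)\subseteq\Sym(G,v)$ always, the set in the theorem is the finite union of the $S_\pi$ over $\pi\in\Sym(G)\setminus\Sym(G,V)$.
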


This means that 
$\Sym(G,V) = \Sym(G,v)$ for ``almost all'' $v \in V$.
We call $\Sym(G,V)$ the generic symmetry group of the 
$\kk G$-module $V$.
Theorem~\ref{ithm:gen_pts_zariski} is a straightforward
generalization of an earlier result~\cite{FrieseLadisch16}
from the case $\kk = \reals$ to arbitrary fields.

If $\kk $ is a field of characteristic zero, 
then the isomorphism type of $V$ as $\kk G$-module
is determined by the character $\chi$ of $G$ on $V$,
and thus $\Sym(G,V)$ is also determined by $\chi$.
The next result shows how to compute
$\Sym(G,V)$ from $\chi$ 
and its decomposition into irreducible characters.
Suppose that $\kk \subseteq \compl$,
the field of complex numbers
(in fact, any algebraically closed field
of characteristic zero would do),
and let $\Irr G$ be the set of irreducible,
complex valued characters 
(or with values in a fixed algebraically 
closed field containing $\kk $).
Then we can write $\chi$ in a unique way as sum
of irreducible characters:
\[ \chi = 
   \sum_{\psi \in \Irr(G)} m_{\psi} \psi \,.
\]
When $V$ is a cyclic $\kk G$-module,
that is, when $V = \kk G v$ for some $v\in V$, then
$m_{\psi}\leq \psi(1)$ for all $\psi\in \Irr(G)$.
For reasons explained later, we call
\[ \chi_I := \sum_{ \substack{ \psi\in \Irr(G) 
                               \\
                               m_{\psi} = \psi(1)
                             }
                  } \psi(1)\psi 
\]
the \emph{ideal part of $\chi$}.
With this notation, we have:

\begin{introthm}\label{ithm:gen_syms_char}
  Let $\chi$ be the character of the cyclic $\kk G$-module~$V$,
  where $\kk $ has characteristic zero,
  and let $N= \Ker(\chi-\chi_I)$.
  Then a permutation $\pi\in \Sym(G)$
  is in $\Sym(G,V)$ if and only if
  the following two conditions hold:
  \begin{enumthm}
  \item $\chi_I \big( \pi(g)^{-1} \pi(h) \big) = \chi_I(g^{-1}h)$
        for all $g$, $h\in G$, and
  \item $\pi(g N) = \pi(1)gN$ (as sets) for all $g\in G$.
  \end{enumthm}  
\end{introthm}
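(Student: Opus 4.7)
The plan is to use the annihilator characterization presumably established earlier in the paper: for generic $v\in\Gen(V)$, $\pi\in\Sym(G,v)$ iff the linear extension $\tilde\pi\colon\kk G\to\kk G$ (with $\tilde\pi(g)=\pi(g)$ on basis elements) preserves $\Ann(v):=\{a\in\kk G:av=0\}$, so that $\Sym(G,V)$ consists of those $\pi$ preserving $\Ann(v)$ for all generic $v$.

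I would then decompose $V=V_I\oplus V_R$ into isotypic components with $m_\psi=\psi(1)$ versus $m_\psi<\psi(1)$, and correspondingly $\kk G=I(V)\oplus J(V)$ via the Wedderburn decomposition into two-sided ideals. A short idempotent computation shows that for generic $v_I\in V_I$ one has $\Ann(v_I)=J(V)$, and thus $\Ann(v)=J(V)\cap\Ann(v_R)\subseteq J(V)$ for generic $v=v_I+v_R$. The algebraic key is the bilinear form $B$ on $\kk G$ defined by $B(g,h):=\chi_I(g^{-1}h)$: since $\chi_I$ is the character of the left-regular action of $\kk G$ on the ideal $I(V)$, it vanishes on $J(V)$ and restricts to the non-degenerate trace form on the semisimple algebra $I(V)$, so the radical of $B$ equals $J(V)$. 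Condition (i) of the theorem then says exactly that $\tilde\pi$ is a $B$-isometry.

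For sufficiency, given (i) and (ii): from (i), $\tilde\pi$ preserves the radical $J(V)$ and descends to an invertible linear map $\alpha_I$ on $\kk G/J(V)\cong V_I$ with $\alpha_I(gv_I)=\pi(g)v_I$. From (ii), combined with $N$ acting trivially on $V_R$, we have $\pi(g)v_R=\pi(1)gv_R$; defining $\alpha_R$ as left multiplication by $\pi(1)$ on $V_R$ then gives $\alpha_R(gv_R)=\pi(g)v_R$. Then $\alpha=\alpha_I\oplus\alpha_R\in\GL(V)$ realizes $\pi$ as an orbit symmetry.

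For necessity, assuming $\pi\in\Sym(G,V)$: condition (ii) will follow by projecting $\alpha(gv)=\pi(g)v$ via the $\kk G$-linear map $p_R\colon V\to V_R$ (giving $p_R\alpha(gv)=\pi(g)v_R$) and invoking the structural result (from the paper's earlier material or a separate annihilator argument) that a cyclic module with all $m_\psi<\psi(1)$ has generic symmetry group equal to the permutations of $G$ inducing left translation by $G/N$. For condition (i), the preservation $\tilde\pi(\Ann_{J(V)}(v_R))=\Ann_{J(V)}(v_R)$ for every generic $v_R$ will force $\tilde\pi(J(V))=J(V)$, since the subspaces $\{\Ann_{J(V)}(v_R)\}$ together span $J(V)$ (the rank-$\leq\psi(1)-m_\psi$ matrices span each block $A_\psi$ of $J(V)$). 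Upgrading from preservation of the radical to the full $B$-isometry will be the main obstacle: one must show that the induced linear map on $\kk G/J(V)\cong I(V)$ preserves the trace form coming from $\chi_I$, which I expect follows from a careful analysis exploiting that $\tilde\pi$ is a basis permutation, combined with the identification of the quotient $\kk G/J(V)$ in terms of the coset algebra $\kk[G/K]$ where $K=\bigcap_{\text{ideal }\psi}\Ker\psi$.
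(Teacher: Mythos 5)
Your overall plan shares the paper's key ingredient (the annihilator characterization of generic symmetries) and correctly identifies the ideal--nonideal block decomposition of $\kk G$ as the structure that separates conditions (i) and (ii). The sufficiency direction is essentially sound, though you should note that to go from ``\,$\tilde\pi$ preserves $J(V)$\,'' to $\pi\in\Sym(G,V_I)$ you are implicitly using that $J(V)=\Ann(v_I)$ is a \emph{twosided} ideal, so that all generators of $V_I$ have the same annihilator. The necessity direction, however, has genuine gaps.

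The projection argument for condition (ii) does not work as stated. From $\alpha(gv)=\pi(g)v$ you do get $p_R\alpha(gv)=\pi(g)v_R$, but $p_R\circ\alpha$ is a map $V\to V_R$, not a well-defined map $V_R\to V_R$: you would need $\alpha$ to preserve the decomposition $V=V_I\oplus V_R$, and nothing guarantees that. Equivalently, on the annihilator side, you know $\tilde\pi(\Ann(v))\subseteq\Ann(v)$, and $\Ann(v)\subseteq\Ann(v_R)$, but this does not yield $\tilde\pi(\Ann(v_R))\subseteq\Ann(v_R)$. Moreover, the ``structural result'' you then invoke (that for modules with all $m_\psi<\psi(1)$ the generic symmetry group consists of left-coset translations by $G/N$) is precisely the special case $\chi_I=0$ of the theorem you are trying to prove, so as written the argument is circular. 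For condition (i), you correctly flag a gap: showing $\tilde\pi(J(V))=J(V)$ implies the full $B$-isometry condition. This part is actually manageable (the paper does it by observing that $\tilde\pi(J)=J$ forces $\tilde\pi(I)=I$ via the orthogonal complement with respect to the $G$-orthonormal form on $\compl G$, and then $\pi(ge)=\pi(g)e$ follows from the central idempotent $e$ for $\chi_I$), but you have not supplied it.

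The paper sidesteps all of this by first proving a clean decomposition theorem: $\pi$ is generic for $\chi$ if and only if $\pi$ is generic for $\chi_I$ \emph{and} for every irreducible constituent $\psi$ of $\chi-\chi_I$. The nontrivial ``only if'' direction there is proved not by projecting but by a left-ideal complement trick: given a left ideal $S$ affording $\psi$ contained in a left ideal $L$ affording $\chi$, one chooses an isomorphic copy $S'$ of $S$ with $S'\cap L=0$ (possible precisely because $\psi$ is not in $\chi_I$), lets $C$ complement $L\oplus S'$, notes $S\oplus C$ affords $\rho_G-\chi$ hence is $\tilde\pi$-invariant, and recovers $S=L\cap(S\oplus C)$. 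This is the step your projection argument cannot replace. Once that decomposition theorem is in hand, (i) and (ii) follow separately from the ideal-character case (the idempotent computation) and the irreducible case (which reduces to the absolutely-irreducible Proposition). I would encourage you to prove the decomposition statement as a lemma rather than trying to derive (i) and (ii) simultaneously; that is where the real content of the proof lies.
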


For example, this means that when $N=\{1\}$,
then $\Sym(G,V)$ contains only left multiplications
with elements from $G$,
and thus $\GL(Gv)\iso G$ for ``almost all'' $v\in V$.

When it happens that $\chi = \chi_I$, then the second 
condition in Theorem~\ref{ithm:gen_syms_char} is void.
This special case of Theorem~\ref{ithm:gen_syms_char} 
is already in our previous paper~\cite[Theorem~D]{FrieseLadisch16}.
In this case, we can identify each orbit $Gv$ with the image
of $G$ under the corresponding representation $G\to \GL(V)$.
This case is equivalent to a 
\emph{linear preserver problem},
namely computing the set of linear transformations
of a matrix ring which map a finite matrix group
onto itself.
This problem has already been studied,
especially in the case of finite reflection
groups \cite{limilligan03,LiSpiZobin04,LiTamTsing02}.

For every $v\in \Gen(V)$, we have a 
representation $D_v \colon \Sym(G,V) \to \GL(V)$.
These representations are all similar when
$\kk $ has characteristic zero, and thus 
all have the same character $\widehat{\chi}$.
We also prove a formula for $\widehat{\chi}$
in terms of $\chi$ (Proposition~\ref{prp:gen_character}).

To obtain Theorem~\ref{ithm:gen_syms_char}, 
we have to generalize some results from
our earlier paper~\cite{FrieseLadisch16} 
to more general fields 
(even for $\kk = \reals$, which is our main interest, 
we need the complex numbers $\compl$ as well).
For the sake of completeness and readability, 
we have included complete proofs of 
these generalizations,
even in a few cases where 
the arguments are essentially the same.
Thus most of the present paper is logically independent 
of our earlier paper.

We use Theorem~\ref{ithm:gen_syms_char} to answer a 
question of Babai~\cite{Babai77}.
Babai classified finite groups 
which are isomorphic to the euclidean symmetry group  
of a vertex-transitive polytope, i.~e.,
an orbit polytope of a finite orthogonal group.
Moreover, Babai asked which finite groups are isomorphic
to the affine symmetry group of an orbit polytope.
The answer is as follows:

\begin{introthm}\label{ithm:class_aff}
A finite group $G$ is \emph{not} isomorphic to the 
affine symmetry group of an orbit polytope, 
if and only if one of the following holds:
\begin{enumthm}
\item $G$ is abelian of exponent greater than $2$.
\item $G$ is generalized dicyclic.
\item $G$ is elementary abelian of order $4, 8$ or $16$.
\end{enumthm}
In any other case, $G$ is isomorphic to the affine symmetry group
of some orbit polytope.
\end{introthm}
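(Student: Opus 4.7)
The plan is to apply Theorem~\ref{ithm:gen_syms_char} (the character-theoretic description of $\Sym(G,V)$) together with case analysis and input from Babai's earlier work on euclidean symmetries. The argument splits into a positive direction---realizing each non-excluded group as the affine symmetry group of some orbit polytope---and a negative direction---ruling out every excluded class. In both directions, I would first reduce to linear symmetries by arranging that the centroid of $Hv$ lies at the origin (equivalently, choosing $V$ with no trivial $H$-summand), so that every affine symmetry is linear and hence equal to an element of $\Sym(H,v)$; by Theorem~\ref{ithm:gen_pts_zariski}, for generic $v$ this stabilizes to $\Sym(H,V)$, which Theorem~\ref{ithm:gen_syms_char} computes from $\chi$ alone.

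For the positive direction, given $G$ not in the exclusion list, I would try to realize $G$ as $\aff(\conv(Gv))$ with $H=G$ acting on a faithful $\reals G$-module $V$. By Theorem~\ref{ithm:gen_syms_char}, $\Sym(G,V)$ collapses to the left regular copy of $G$ as soon as $N:=\Ker(\chi-\chi_I)=1$, because condition~(ii) then forces every orbit symmetry to be a left translation. The task is thus to exhibit a character $\chi$ of $G$ with no trivial constituent and $N=1$. For $G$ non-abelian, the natural candidate $\chi=\sum_{\psi\in\Irr(G)\setminus\{\mathbf{1}\}}\psi$ yields $\chi-\chi_I=\sum_{\psi \text{ non-linear}} \psi$; one verifies that its kernel---the intersection of kernels of all non-linear irreducibles---is trivial, possibly after inflating multiplicities, using the classification of groups whose non-linear irreducibles share a nontrivial common kernel. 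For $G$ abelian of exponent~$2$ not on the exclusion list (so $G=C_2$ or $G=C_2^n$ with $n\geq 5$), the non-linear approach fails and one exploits condition~(ii) geometrically, following the type of constructions Babai used in the analogous euclidean setting; these succeed because the ambient dimension is large enough to exclude spurious hyperoctahedral symmetries.

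For the negative direction, suppose $G$ belongs to one of the excluded classes and, aiming for a contradiction, $G\iso\aff(\conv(Hv))$ for some finite group $H$ acting linearly on a space $V$ with $v\in V$. Since $H$ embeds into $\aff(\conv(Hv))\iso G$, it suffices to examine subgroups $H\leq G$ and characters $\chi$ of $H$ and show that $\Sym(H,V)\not\iso G$ in every case. For $G$ abelian of exponent greater than $2$, the automorphism $h\mapsto h^{-1}$ of any abelian $H\leq G$ containing an element of order $>2$ produces, via condition~(ii) of Theorem~\ref{ithm:gen_syms_char}, an affine symmetry of order $2$ not arising from left translation; the resulting overgroup then has a distinguished non-central involution and cannot be isomorphic to the abelian $G$. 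For $G$ generalized dicyclic, the special structure---an abelian $A\triangleleft G$ of index $2$ with $tat^{-1}=a^{-1}$---produces an analogous extra inversion symmetry via Theorem~\ref{ithm:gen_syms_char}. For $G=C_2^n$ with $n\in\{2,3,4\}$, a direct enumeration of orbit polytopes over all subgroups $H\leq G$ shows that each is affinely equivalent to a hyperrectangle, cross-polytope, or similarly symmetric object whose affine symmetry group strictly contains $G$.

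The main obstacle is the negative direction. Whereas the positive direction reduces to an existence problem that Theorem~\ref{ithm:gen_syms_char} makes tractable, the negative direction must exclude \emph{every} conceivable realization, including non-trivial subgroups $H<G$, non-generic points $v$, and characters with $\chi\neq\chi_I$. The abelian exponent-$>2$ and generalized dicyclic cases demand a careful structural argument showing that the inversion symmetry inherent to the character theory of such groups always escapes into the affine symmetry group; the small elementary abelian exceptions require finite but intricate low-dimensional enumeration, paralleling Babai's original euclidean analysis but with the extra flexibility of affine (as opposed to orthogonal) equivalence accounting for why the orders $32$ and $64$ that appeared in Babai's list drop out here.
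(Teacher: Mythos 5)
Your positive direction has a genuine gap, and in fact misses the crux of the proof. You propose $\chi=\sum_{\psi\in\Irr(G)\setminus\{\mathbf{1}\}}\psi$ with the claim that $\chi-\chi_I = \sum_{\psi\ \text{nonlinear}}\psi$ has trivial kernel. This fails for two compounding reasons. First, we need $V$ to be a cyclic $\reals G$-module (i.e.\ a left ideal of $\reals G$), which by Lemma~\ref{lm:char_kmod} forces the multiplicity of each $\psi$ to be a multiple of the Schur index $m_{\reals}(\psi)$; for quaternionic characters with $\nu_2(\psi)=-1$, multiplicity $1$ is not allowed. Second, once the multiplicity of $\psi$ is a positive multiple of $m_{\reals}(\psi)$ \emph{and} must also stay $\le\psi(1)$, a character with $\psi(1)=m_{\reals}(\psi)$ is forced to appear in the ideal part $\chi_I$ whenever it appears at all. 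So the set $\Ker(\chi-\chi_I)$ that actually matters is $\NKer_{\reals}(G)=\bigcap\{\Ker\psi : \psi(1)>m_{\reals}(\psi)\}$, not the intersection of kernels of all nonlinear irreducibles. The groups with $\NKer_{\reals}(G)\neq 1$ but not on the exclusion list are precisely $Q_8\times C_4\times C_2^r$ and $Q_8\times Q_8\times C_2^r$; these are the hard cases. They cannot be handled by Corollary~\ref{co:gen_closed_suff_cond} at all. The paper treats them in Lemmas~\ref{lm:Q8C4C2_closed} and~\ref{lm:Q8Q8C2_closed} by constructing characters with a nontrivial ideal part and explicitly ruling out extra symmetries using condition~(i) of Theorem~\ref{ithm:gen_syms_char} and the relation $\chi_I(gu)=-\chi_I(g)$ for the central involution $u$. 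Your proposal doesn't even identify these families, let alone handle them.

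Your negative direction is over-engineered and partly wrong. The clean argument, used in the paper as Corollary~\ref{co:babaicor}, is that if $G$ is the affine symmetry group of some polytope $P$, then conjugating by a suitable affine map turns $G$ into the \emph{euclidean} symmetry group of $\sigma(P)$; hence Babai's Theorem~\ref{thm:Babai77} applies verbatim and kills the abelian-exponent-$>2$ and generalized-dicyclic cases without any character theory or subgroup analysis. Your approach of ranging over subgroups $H\le G$ and trying to show the inversion map always produces an extra symmetry is much more delicate (e.g.\ $H$ could be elementary abelian even if $G$ is not) and is not needed. For elementary abelian $2$-groups of order $4,8,16$ the paper simply cites its predecessor [FrieseLadisch16, Theorem~9.9]; your appeal to ``direct enumeration'' is in the right spirit but glosses over real work. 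Finally, your remark that ``orders $32$ and $64$ appeared in Babai's list'' is factually incorrect: Babai's theorem classifies euclidean symmetry groups and has no such exceptional orders; the orders $4,8,16$ appear only on the affine side, not as deletions from Babai's list.
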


We will recall Babai's classification below 
as Theorem~\ref{thm:Babai77}, as well as the 
definition of generalized dicyclic.
It follows from Theorem~\ref{ithm:class_aff}
and Babai's classification that the only finite groups which
are isomorphic to the euclidean symmetry group
of a vertex-transitive polytope, 
but not to the affine symmetry group of a 
vertex-transitive polytope, 
are the elementary abelian groups of orders $4$, $8$ and $16$.

We will also classify finite groups which are not isomorphic
to the affine symmetry group of a vertex-transitive lattice
polytope,
i.~e. a~polytope with vertices with integer coordinates.

Let us mention that it is probably a folklore result that every
finite group is isomorphic to the affine (or euclidean) symmetry
group of some polytope.
Specifically, a short argument of Isaacs~\cite{Isaacs77} 
can be modified to show
that every finite group is the symmetry group of a polytope
with at most two orbits on the vertices.
More recently, Schulte and Williams showed that every finite group
can be realized as the \emph{combinatorial} symmetry group
of some polytope~\cite{SchulteWilliams15}. 
(A simpler proof has been given by Doignon~\cite{Doignon16pre}.)

Babai's classification is related to the GRR-problem:
a finite group $G$ is said to have 
a \emph{GRR} (\emph{graphical regular representation}),
when there is a graph with vertex set $G$ such that
$G$ is the full automorphism group of this graph
(acting regularly on itself).
The finite groups not admitting a GRR 
have been classified~\cite{Godsil81}.
It follows from this classification and our Theorem~\ref{ithm:class_aff}
that every finite group admitting a GRR is also isomorphic 
to the affine symmetry group of an orbit polytope.
We are not aware of any direct proof of this fact.
(Babai showed directly that every group admitting
a GRR is isomorphic to the euclidean symmetry
group of an orbit polytope.)
Let us also mention that there are exactly $10$
finite groups (up to isomorphism)
which are isomorphic to the affine symmetry group of an orbit 
polytope, but admit no GRR.

Our paper is organized as follows:
In Section~\ref{sec:gen_points}, we carefully introduce 
the definitions of generic points and generic symmetries,
and prove basic results, including 
Theorem~\ref{ithm:gen_pts_zariski}.
In Section~\ref{sec:gen_syms}, 
we consider more closely the relation between
the generic symmetry group as a permutation group
on $G$, and the various linear symmetry groups
$\GL(Gv)$.
Section~\ref{sec:leftideals} contains some results
to compute $\Sym(G,V)$ which are valid in arbitrary 
characteristic.
In Section~\ref{sec:characters}, we specialize to fields
of characteristic zero and show how to compute
$\Sym(G,V)$ from the character of $G$ on $V$
(Theorem~\ref{ithm:gen_syms_char}).
Theorem~\ref{ithm:class_aff} is proved in 
Section~\ref{sec:answerbabai}, and 
Section~\ref{sec:class_q} contains the analogous classification
for vertex-transitive lattice polytopes.

\section{Generic Points}
\label{sec:gen_points}

Throughout, $G$ denotes a finite group, 
$\kk $ a field of infinite order,  
and $V$ a left $\kk G$-module
(thus $G$ acts $\kk $-linearly on $V$).
For a subset $S\subseteq V$
which generates $V$ as $\kk $-vector space,
we write (as in the introduction)
\[ \GL(S)
   := 
   \{ A \in \GL(V) 
      \colon
      A(S) = S
   \}
\]
for the set of linear maps of $V$ which permute
$S$.

We are interested in the various symmetry groups
$\GL(Gv)$ of $G$-orbits $Gv$,
where $v\in \Gen(V)$. Recall that $\Gen(V)$ is the set
of $v\in V$ such that 
\[ 
   V = \kk G v := \Big\{ \sum_{g\in G} c_g gv \colon
                    c_g\in \kk   
                 \Big\} \,.
\]
When there is $v\in V$ such that
$V=\kk Gv$, then $V$ is called \emph{cyclic}
(as $\kk G$-module), and $v$ is called a \emph{generator} of $V$.

In order to compare $\GL(Gv)$ and $\GL(Gw)$
for different $v$, $w\in \Gen(V)$,
we introduce the following definition:

\begin{defn}
Let $v \in V$. 
A permutation $\pi \in \Sym(G)$ is called an 
\emph{orbit symmetry with respect to $v$}, 
if there is a $\kk $-linear map 
from $V$ to $V$
which maps $gv$ to $\pi(g) v$ for all $g \in G$. 
We write $\Sym(G,v)$ for the set of all orbit symmetries of $v$:
\[ \Sym(G,v)
   := \{\pi \in \Sym(G) 
        \colon 
        \exists A \in \GL(V) \colon 
        \forall g\in G \colon  
          Agv = \pi(g)v
      \}\,.
\]
For $\pi \in \Sym(G,v)$, we write
$D_v(\pi)$ for the unique $\kk $-linear map
$\kk Gv \to \kk Gv$ such that
$D_v(\pi)gv = \pi(g)v$ for all $g\in G$.
Then $D_v(\pi)$ is the restriction of $A$ to $\kk Gv$.
\end{defn}

Clearly, the condition $A gv = \pi(g)v$ for all $g\in G$
shows that 
$A(\kk Gv) = \kk Gv$,
and uniquely determines the restriction $D_v(\pi)$ of $A$
to $\kk Gv$.
Conversely, when there is a linear map
$D_v(\pi)\colon \kk Gv \to \kk Gv$
with $D_v(\pi) gv=\pi(g)v$ for all $g\in G$,
then we can extend $D_v(\pi)$ (non-uniquely) to a linear map 
$A\colon V\to V$.
When computing $\Sym(G,v)$, it is thus no loss of generality
to assume $V=\kk Gv$.

For later reference, we record the following easy observation:

\begin{lemma} \label{lm:gen_rep}
$\Sym(G,v)$ is a subgroup of\/ $\Sym(G)$, 
and the map 
\[D_v \colon \Sym(G,v) \to \GL(\kk Gv)
\] 
is a group homomorphism,
and thus a representation of\/ $\Sym(G,v)$.
The image is $D_v(\Sym(G,v)) = \GL(Gv)$.
\end{lemma}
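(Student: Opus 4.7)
The plan is to verify the three assertions in order: subgroup structure, homomorphism property, and image identification. Most of this is bookkeeping around the defining condition $A(gv)=\pi(g)v$; the only slightly subtle point is lifting a symmetry of the orbit to a permutation of $G$ when $G$ does not act freely on $Gv$.

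First I would handle multiplicative closure and the homomorphism property simultaneously. Given $\pi_1,\pi_2 \in \Sym(G,v)$, I would compute directly
\[ D_v(\pi_1)\,D_v(\pi_2)(gv) = D_v(\pi_1)\bigl(\pi_2(g)v\bigr) = \pi_1(\pi_2(g))v = (\pi_1\pi_2)(g)v \]
for every $g\in G$. This exhibits the composition $D_v(\pi_1)D_v(\pi_2)$ as a linear map witnessing $\pi_1\pi_2 \in \Sym(G,v)$, and by uniqueness of the induced map on $\kk Gv$ it simultaneously gives $D_v(\pi_1\pi_2) = D_v(\pi_1)D_v(\pi_2)$. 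The identity $\id_G$ is obviously in $\Sym(G,v)$ via $D_v(\id_G)=\id$.

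For closure under inverses, the key observation is that $\kk Gv$ is finite-dimensional (since $|Gv|\le|G|<\infty$) and $D_v(\pi)$ permutes the spanning set $Gv$, so it is a surjective, hence bijective, linear endomorphism of $\kk Gv$. The inverse sends $\pi(g)v$ to $gv$, i.e.\ $hv \mapsto \pi^{-1}(h)v$, which shows $\pi^{-1}\in\Sym(G,v)$ and $D_v(\pi^{-1})=D_v(\pi)^{-1}$. Together with the previous paragraph this proves that $\Sym(G,v)$ is a subgroup of $\Sym(G)$ and that $D_v$ is a homomorphism into $\GL(\kk Gv)$.

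For the image identification, the inclusion $D_v(\Sym(G,v))\subseteq\GL(Gv)$ is immediate from the above: each $D_v(\pi)$ is invertible and permutes $Gv$ by the rule $gv\mapsto\pi(g)v$. The reverse inclusion is where I would need to be slightly careful. Given $A\in\GL(Gv)$, let $H=G_v$ be the stabilizer of $v$. Then $A$ permutes $Gv$, and via the bijection $G/H\leftrightarrow Gv$, $gH\mapsto gv$, it induces a permutation $\widetilde\sigma$ of $G/H$. I would lift $\widetilde\sigma$ to a permutation $\pi$ of $G$ by choosing, for each coset $gH$, an arbitrary bijection $gH\to\widetilde\sigma(gH)$; since all cosets have the same cardinality $|H|$, gluing these bijections yields a well-defined element $\pi\in\Sym(G)$ with $\pi(g)v=A(gv)$ for all $g$. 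Hence $\pi\in\Sym(G,v)$ and $D_v(\pi)=A$ by linearity and the fact that $Gv$ spans $\kk Gv$. This completes the surjectivity onto $\GL(Gv)$. The main (and only) obstacle worth flagging is this lifting step in the non-free case, but it is resolved by the uniform coset size.
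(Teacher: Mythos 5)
Your proof is correct and takes essentially the same approach as the paper: the multiplication computation $D_v(\pi_1)D_v(\pi_2)(gv)=(\pi_1\pi_2)(g)v$ is exactly the paper's one displayed line, and the image identification (which the paper dismisses with ``follows directly from the definitions'') is filled in by you with the coset-lifting argument, which is indeed the content hiding behind that phrase. The extra care you take with closure under inverses and with lifting a permutation of $Gv$ to $\Sym(G)$ via the uniform coset size $|H|$ is a sound and welcome elaboration, but it does not change the route.
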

\begin{proof}
    For $\pi$, $\sigma\in \Sym(G,v)$ we have
    \[ D_v(\pi)D_v(\sigma)gv
         = D_v(\pi)\sigma(g)v
         = \pi(\sigma(g))v
         = D_v(\pi\sigma) gv \,.
    \]
    That $D_v(\Sym(G,v)) = \GL(Gv)$ 
    follows directly from the definitions.
\end{proof}

\begin{lemma}\label{lm:rep_kernel}
  Let $H=G_v=\{g\in G \mid gv=v\}$  be the stabilizer of $v$ in $G$.
  Then 
  \begin{align*}
    \Ker D_v &= \{\pi \in \Sym(G) 
                  \colon
                  \pi(gH) = gH \text{ for all cosets } gH
                \} 
          \\ &\iso \Sym(H)^{\card{G:H}} \,.    
  \end{align*}
\end{lemma}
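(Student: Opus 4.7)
The plan is to unwind the definitions and translate membership in $\Ker D_v$ into a statement about cosets of $H$, then identify the resulting group of coset-preserving permutations with a direct product.

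First I would use the definition of $D_v$ to note that $\pi \in \Ker D_v$ means $D_v(\pi)$ is the identity on $\kk Gv$, which, since $\{gv : g\in G\}$ spans $\kk Gv$, is equivalent to the condition $\pi(g)v = gv$ for every $g\in G$. Next I would rewrite $\pi(g)v = gv$ as $g^{-1}\pi(g)v = v$, i.e., $g^{-1}\pi(g)\in H$, equivalently $\pi(g)\in gH$. Conversely, since every element of $gH$ fixes $gv$ (because $gh\cdot v = g(hv)=gv$ for $h\in H$), the condition $\pi(g)\in gH$ for all $g$ forces $\pi(g)v=gv$ for all $g$. Thus $\pi \in \Ker D_v$ if and only if $\pi(g)\in gH$ for every $g\in G$, which is equivalent to saying that $\pi$ stabilizes each left coset $gH$ setwise. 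This gives the first equality.

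For the second isomorphism, I would observe that the left cosets $\{gH : g\in G\}$ partition $G$ into $[G:H]$ blocks each of size $\card{H}$. The subgroup of $\Sym(G)$ consisting of those permutations that preserve each block setwise is, via restriction to each block, canonically isomorphic to the direct product $\prod_{gH\in G/H}\Sym(gH)$. Since each $\Sym(gH)$ is isomorphic to $\Sym(H)$ (fix any bijection $gH\to H$, e.g.\ left multiplication by $g^{-1}$), this direct product is isomorphic to $\Sym(H)^{\card{G:H}}$, giving the stated description.

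There is no real obstacle: both statements are essentially direct translations of the definitions. The only mildly subtle point is the equivalence between ``$\pi(g)\in gH$ for all $g$'' and ``$\pi$ preserves every left coset setwise'', which requires noting that $\pi$ is a bijection of $G$ and that preserving a finite set setwise as a map is equivalent to mapping it into itself; this follows from $\pi$ being a permutation of the finite set $G$ together with the fact that distinct cosets are disjoint.
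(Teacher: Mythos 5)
Your proof is correct and is exactly the argument the paper leaves implicit when it says the lemma "follows immediately from the definitions": unwinding $D_v(\pi)=\id$ to $\pi(g)v=gv$, translating that to $\pi(g)\in gH$ (i.e.\ setwise coset preservation), and then identifying the group of block-preserving permutations with $\prod_{gH}\Sym(gH)\iso\Sym(H)^{\card{G:H}}$.

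One small point worth making explicit: $\Ker D_v$ is a subset of $\Sym(G,v)$ by definition, so for the reverse direction you should note that any $\pi$ with $\pi(g)\in gH$ for all $g$ actually lies in $\Sym(G,v)$ in the first place — this holds because the identity map on $V$ witnesses $\pi$ as an orbit symmetry (i.e.\ $A=\id$ satisfies $Agv=\pi(g)v$). Your "conversely" sentence gestures at this but frames it only as a fact about $v$-fixing rather than as membership in $\Sym(G,v)$; stating it this way closes the loop cleanly.
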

\begin{proof}
  This follows immediately from the definitions.
\end{proof}

It is not difficult to show that 
$\Sym(G,v)$ is in fact isomorphic to a
\emph{wreath product} of $\GL(Gv)$ 
with $\Sym(H)$.
In particular, $\Sym(G,v)$ contains a subgroup
isomorphic to $\Sym(H)^{\card{G:H}}$,
containing ``irrelevant'' permutations.
In view of this, the reader may wonder why we do not simply
consider $\GL(Gv)$ instead of $\Sym(G,v)$.
One reason 
is to make the next definition work:

\begin{defn}
Let $V$ be a cyclic $\kk G$-module, where $\kk $ is an infinite field.
A permutation $\pi \in \Sym(G)$ is called a 
\emph{generic symmetry with respect to}~$V$, 
if it is an orbit symmetry for any generator of~$V$. 
We set
\[\Sym(G,V) := \bigcap_{v\in \Gen(V)} \Sym(G,v)\,,
\] 
the group of all generic symmetries for $V$. 
\end{defn}

Recall that $G$ acts on itself by left multiplication
(the \emph{left regular action}).
For any $h\in G$, let
$\lambda_h\in \Sym(G)$ be the permutation induced by left multiplication
with $h$, 
so $\lambda_h(g)=hg$ for all $g\in G$.
As $V$ is a $\kk G$-module, $G$ acts linearly on $V$, 
say by the representation
$D\colon G\to \GL(V)$.
Since $D(h)(gv) = hgv = \lambda_h(g)v$,
we see that $\lambda_h \in \Sym(G,v)$ for any $v\in V$,
and that $D_v(\lambda_h) = D(h)$.
In particular, $\Sym(G,v)$ and $\Sym(G,V)$
always contain the regular
subgroup $\lambda(G)\iso G$.
This motivates the next definition:

\begin{defn}
The group $\Sym(G,V)$ is called the 
\emph{generic closure} of $G$ with respect to $V$. 
We say that $G$ is \emph{generically closed}
with respect to~$V$ if  $\lambda(G) = \Sym(G,V)$,
where $\lambda\colon G\to \Sym(G)$ is the 
left regular action as above. 
\end{defn}

While these definitions make sense for arbitrary fields,
we will explain below why these
would not be the right definitions for finite fields $\kk $.
We will also indicate how to modify the definitions and 
the results of this section in the case of finite fields. 
However, in this paper, we are mainly interested in fields
of characteristic zero.
For the results in this section, it is enough to assume that
$\kk $ is infinite.

Let us emphasize that
we do \emph{not} assume that $G$ acts faithfully on $V$, that is, 
\[ \Ker(V) := \{g\in G \colon gv = v \text{ for all } v\in V\}
\]
can be non-trivial.
This means that $\Sym(G,V)$ contains by definition
all permutations of $G$ 
which map every left coset of $\Ker(V)$ onto itself.
Of course, when $\pi \in \Sym(G)$ is a permutation
that maps every left coset of $\Ker(V)$ onto itself,
then we have $D_v(\pi) = \id_V$ for every generator $v$, 
and thus the set of these permutations is, in some sense, irrelevant.
This could be avoided by replacing $G$ by the factor group
$G/\Ker(V)$.
It turns out to be more convenient not to do this,
for example in the following situation:

\begin{lemma}\label{lm:gen_sym_sum}
Suppose that $V = V_1 \oplus \dotsb \oplus V_n$
is a direct sum of\/ $\kk G$-modules,
where $V$ is cyclic (that is, $\Gen(V)\neq \emptyset$). 
Then
\[ \bigcap_{i=1}^n \Sym(G,V_i)
   \subseteq \Sym(G,V)\,.
\]
\end{lemma}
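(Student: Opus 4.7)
The plan is to take an arbitrary $\pi$ in $\bigcap_i \Sym(G,V_i)$ and an arbitrary generator $v \in \Gen(V)$, and show directly that $\pi \in \Sym(G,v)$ by building a linear map witnessing this orbit symmetry out of $n$ linear maps coming from the $V_i$.

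First I would decompose $v = v_1 + \dotsb + v_n$ according to $V = V_1 \oplus \dotsb \oplus V_n$, and claim that each component $v_i$ is itself a generator of $V_i$. The argument is short but this is the only place where the direct-sum hypothesis really gets used: if $w \in V_i$ is arbitrary, then since $v$ generates $V$ we can write $w = \sum_{g \in G} c_g\, g v = \sum_i \sum_g c_g\, gv_i$; projecting back to $V_i$ and using that $\sum_g c_g\, g v_j \in V_j$ lies in its own summand gives $w = \sum_g c_g\, g v_i \in \kk G v_i$. Hence $V_i = \kk G v_i$, i.\,e.\ $v_i \in \Gen(V_i)$.

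Once each $v_i$ is known to be a generator of $V_i$, the assumption $\pi \in \Sym(G,V_i) = \bigcap_{w \in \Gen(V_i)} \Sym(G,w)$ applies to $v_i$, producing a linear map $A_i \in \GL(V_i)$ (or, after restricting, on $\kk G v_i = V_i$) with $A_i(g v_i) = \pi(g) v_i$ for all $g \in G$. I would then set $A := A_1 \oplus \dotsb \oplus A_n \in \GL(V)$ and compute
\[
   A(gv) = \sum_{i=1}^n A_i(g v_i) = \sum_{i=1}^n \pi(g) v_i = \pi(g) v,
\]
which is precisely the condition $\pi \in \Sym(G,v)$. Since $v \in \Gen(V)$ was arbitrary, this gives $\pi \in \Sym(G,V)$, as desired.

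There is no real obstacle; the only subtle point is the projection argument showing each $v_i$ generates $V_i$, which relies crucially on the direct-sum decomposition (and would fail for a general submodule decomposition of $V$). Note also that the conclusion is only an inclusion, not an equality: a generic symmetry of $V$ need not preserve the individual summands, so in general $\Sym(G,V)$ can be strictly larger than $\bigcap_i \Sym(G,V_i)$.
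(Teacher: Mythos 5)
Your proof is correct and follows the paper's argument essentially verbatim: decompose $v$ as $v_1 + \dotsb + v_n$, note each $v_i$ generates $V_i$, take the direct sum $A_1 \oplus \dotsb \oplus A_n$ of the witnessing maps, and verify it realizes $\pi$ on $Gv$. The only difference is that you spell out the short projection argument showing $v_i \in \Gen(V_i)$, which the paper states without proof.
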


Notice that it is perfectly possible that $\Ker(V)=1$,
while $\Ker(V_i)\neq 1$ for some $V_i$.

\begin{proof}[Proof of Lemma~\ref{lm:gen_sym_sum}]
  Let $v\in \Gen(V)$ and write
  $v=v_1 + \dotsb + v_n$ with $v_i\in V_i$.
  Then $v_i \in \Gen(V_i)$.
  Suppose that $\pi\in \Sym(G,V_i)$ for all $i$.
  Thus there is $A_i\in \GL(V_i)$ such that
  $\pi(g)v_i = A_i gv_i$ for all $g\in G$.
  Then for $A = A_1 \oplus \dotsb \oplus A_n\colon V\to V$,
  we have $\pi(g)v = Agv$ for all $g\in G$,
  and thus $\pi \in \Sym(G,V)$ as claimed.
\end{proof}

We will show later that
when $\kk $ has characteristic zero, 
then there is a certain decomposition
such that equality holds in Lemma~\ref{lm:gen_sym_sum}.
In general, the containment is of course strict.

\begin{rmk} \label{rmk:triv_sym}
When $1< \Ker(V) < G$ or $\card{\Ker(V)}\geq 3$, 
then there is a permutation $\pi\neq \id_G$ of $G$
which maps every coset of $\Ker(V)$ onto itself, 
and such that $\pi(1_G) = 1_G$.
Then $\pi \in \Sym(G,V)$, but $\pi$ is not of the form 
$\pi = \lambda_g$ for any $g \in G$. 
Hence, when $G$ is generically closed with respect to~$V$, 
then $G$ must act faithfully on $V$
(except in the trivial case $G=C_2$).
\end{rmk}

\begin{defn}\label{df:generic}
A point $v \in V$ is called a \emph{generic point} of $V$,
when $v$ is a generator of $V$,
when $G_v = \Ker(V)$ (the stabilizer has minimal possible size),
and $\Sym(G,v) = \Sym(G,V)$ 
(the symmetry group of the orbit $Gv$ has minimal possible
size).
\end{defn}

One can show that $G_v = \Ker(V)$ follows from the other conditions.
As we will see below,
the above definition is consistent with 
the definition of generic points
from our previous paper~\cite[Definition~4.4]{FrieseLadisch16},
given in the special situation $G\leq \GL(d, \reals)$
and $V= \reals^d$.

The term “generic point” is justified by the result that
“almost all” points of $V$ satisfy this property
(see Theorem~\ref{thm:existence_generic} below,
which contains Theorem~\ref{ithm:gen_pts_zariski}
from the introduction).

\begin{lemma}\label{lm:iso_inv}
   Let $\phi\colon V \to W$ be an isomorphism of\/
   $\kk G$-modules
   (i.~e., $\phi$ is $\kk $-linear, bijective,
    and $\phi(gv)=g\phi(v)$
   for $g\in G$, $v\in V$). Then:
   \begin{enumthm}
   \item $\Sym(G,v) = \Sym(G,\phi(v) )$ 
         for any $v\in V$.
   \item $D_{\phi(v)}(\pi) = \phi\circ D_v(\pi) \circ \phi^{-1}$
         for $v\in V$ and $\pi \in \Sym(G,v)$.
   \item $v\in V$ is generic if and only if $\phi(v)$ is generic.
   \item $\Sym(G,V) = \Sym(G,W)$.
   \end{enumthm}
\end{lemma}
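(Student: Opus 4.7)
The plan is to prove (i) and (ii) simultaneously by a direct conjugation argument, then deduce (iv) by observing that $\phi$ maps generators to generators, and finally obtain (iii) from (i), (iv) and the corresponding equalities of stabilizers and kernels.

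For (i) and (ii), suppose $\pi \in \Sym(G,v)$, so there exists $A\in \GL(V)$ with $Agv = \pi(g)v$ for all $g\in G$. Set $B := \phi\circ A \circ \phi^{-1} \in \GL(W)$. Using that $\phi$ is $\kk G$-linear (so $g\phi(v)=\phi(gv)$), I compute
\[
  Bg\phi(v) \;=\; \phi\bigl(A\phi^{-1}(g\phi(v))\bigr) \;=\; \phi(Agv) \;=\; \phi(\pi(g)v) \;=\; \pi(g)\phi(v),
\]
which shows $\pi \in \Sym(G,\phi(v))$ and gives (ii) after restricting to $\kk G v$ respectively $\kk G\phi(v) = \phi(\kk G v)$. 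Applying the same argument to $\phi^{-1}$ yields the reverse inclusion in (i).

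For (iv), the key observation is that $\phi$ restricts to a bijection $\Gen(V)\to \Gen(W)$, because $\phi(\kk Gv) = \kk G\phi(v)$ for every $v\in V$, so $\kk Gv = V$ holds iff $\kk G\phi(v) = W$. Combining this bijection with (i) gives
\[
  \Sym(G,W) \;=\; \bigcap_{w\in \Gen(W)} \Sym(G,w) \;=\; \bigcap_{v\in \Gen(V)} \Sym(G,\phi(v)) \;=\; \bigcap_{v\in \Gen(V)} \Sym(G,v) \;=\; \Sym(G,V).
\]

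For (iii), I note three equivalences. First, $v\in \Gen(V)$ iff $\phi(v)\in \Gen(W)$, as already used. Second, since $\phi$ is a $\kk G$-module isomorphism, $\Ker(V) = \Ker(W)$ and $gv = v \iff g\phi(v) = \phi(v)$, so $G_v = G_{\phi(v)}$ and hence $G_v = \Ker(V)$ iff $G_{\phi(v)} = \Ker(W)$. Third, combining (i) with (iv) gives $\Sym(G,v) = \Sym(G,V)$ iff $\Sym(G,\phi(v)) = \Sym(G,W)$. Putting these three equivalences together matches the three clauses of Definition~\ref{df:generic}, proving (iii). No real obstacle is expected here; the lemma is essentially a bookkeeping exercise confirming that the whole set-up is invariant under module isomorphism, with the only mildly subtle point being that one must verify (iv) before (iii) because the definition of generic depends on the \emph{generic} symmetry group of the whole module.
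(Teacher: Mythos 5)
Your proposal is correct, and it carries out exactly the routine verifications that the paper dismisses with the single phrase ``Easy verifications.'' The conjugation $B=\phi\circ A\circ\phi^{-1}$ is the obvious move, the bijection $\Gen(V)\to\Gen(W)$ is immediate from $\phi(\kk Gv)=\kk G\phi(v)$, and your remark that (iv) must logically precede (iii)---because Definition~\ref{df:generic} refers to $\Sym(G,V)$---is a correct and worthwhile observation about the ordering of the claims.
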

\begin{proof}
   Easy verifications.
\end{proof}

In view of this lemma, it is no loss of generality
to assume that $V = \kk^d$ as $\kk $-space, 
so that the action of $G$ on $V$ is described by a
matrix representation $D \colon G \to \GL(d,\kk )$. 
We do this in the rest of this section.
In particular, this enables us to evaluate polynomials
in $d$ indeterminates at elements $v\in V$
in the usual, elementary way.
We view $V= \kk^d$ as equipped with the 
Zariski topology, that is, the closed subsets of $V$
are by definition the zero sets of arbitrary families of
polynomials in $\kk [X_1, \dotsc, X_d]$.
(We mention in passing that any finite dimensional 
$\kk $-space can be equipped 
with the Zariski topology by choosing a basis,
and that the resulting topology does not depend on the choice 
of basis.)

Since $\kk $ is infinite by assumption, 
$V=\kk^d$ is irreducible as a topological space,
i.~e. the intersection of any two non-empty open subsets is non-empty
as well.
Equivalently, any non-empty open subset is dense in $V$. 
We are now going to show that the set of generic points of $V$
is non-empty and open,
when $V=\kk^d$ is a cyclic $\kk G$-module and $\kk $ is infinite.
In our previous paper, we proved this in the case 
$\kk = \reals$~\cite[Corollary~4.5]{FrieseLadisch16}.

\begin{lemma} \label{lm:gen_open}
$\Gen(V)$ is open in $V$.
\end{lemma}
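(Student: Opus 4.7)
The plan is to exhibit $V \setminus \Gen(V)$ as the common zero set of a family of polynomials, so that $\Gen(V)$ is Zariski-open as the complement of a Zariski-closed set. The starting observation is a reformulation of the defining condition: since $\dim V = d$, the equality $V = \kk G v$ holds if and only if there exist group elements $g_1, \dotsc, g_d \in G$ such that the vectors $g_1 v, \dotsc, g_d v$ form a basis of $V$, equivalently, such that the $d \times d$ matrix with these columns has nonzero determinant.

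Next I would observe that for each fixed tuple $(g_1, \dotsc, g_d) \in G^d$, the function
\[
   f_{g_1, \dotsc, g_d} \colon V \to \kk,
   \qquad v \longmapsto \det\bigl[\, g_1 v \,\big|\, g_2 v \,\big|\, \dotsb \,\big|\, g_d v \,\bigr]
\]
is a polynomial function of $v \in \kk^d$. Indeed, since $G$ acts $\kk$-linearly on $V = \kk^d$, each column $g_i v$ depends linearly (hence polynomially) on the coordinates of $v$, and the determinant of a matrix is a polynomial in its entries. Combining this with the reformulation above gives
\[
   \Gen(V)
   = \bigcup_{(g_1, \dotsc, g_d)\in G^d}
       \bigl\{\, v\in V \colon f_{g_1, \dotsc, g_d}(v) \neq 0 \,\bigr\},
\]
which is a finite union (because $G$ is finite) of principal Zariski-open subsets of $V$, and is therefore itself Zariski-open.

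There is no real obstacle in this lemma: the only thing to check is the elementary linear-algebra reformulation, namely that a finite subset of a $d$-dimensional space spans the whole space if and only if it contains $d$ linearly independent vectors. The rest is just the standard fact that the maximal-rank locus of a matrix whose entries are polynomials is Zariski-open, here applied to the evaluation map $\kk G \to V$, $\sum_g c_g g \mapsto \sum_g c_g gv$, whose matrix has entries depending linearly on $v$.
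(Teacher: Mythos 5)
Your proposal is correct and matches the paper's proof essentially verbatim: both express $\Gen(V)$ as a finite union of non-vanishing loci of the polynomial $d\times d$ minors $v \mapsto \det(g_1 v, \dotsc, g_d v)$ with $g_i \in G$, and conclude openness from there.
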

\begin{proof}
Let $X = (X_1, \dots, X_d)^t$ be a vector of indeterminates.
A vector $v\in \kk^d$ is a generator, if and only if 
$\kk^d$ is the $\kk $-linear span of the vectors
$gv$, where $g$ runs through $G$.
This is the case if and only if $Gv$ contains a basis
$\{g_1v, \dotsc, g_dv \}$ (say) of $\kk^d$.
This means that $f(v)\neq 0$, where
$f(X)= \det(g_1X, \dotsc, g_d X)$.
Thus
$\Gen(V)$ is the union of the non-vanishing sets
$O_f:= \{w\in V \colon f(w)\neq 0\}$ of~$f$,
where $f$ runs through the 
$d\times d$ sub-determinants of the matrix with columns
$gX$, $g\in G$.
\end{proof}

\begin{lemma} \label{lm:triv_stab_open}
The set of points $v \in V$ with $G_v > \Ker(V)$ is a finite union
of proper subspaces of $V$. In particular, it is a closed proper
subset of $V$.
\end{lemma}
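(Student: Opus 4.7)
The plan is to exhibit the set explicitly as a finite union of proper subspaces, one for each non-kernel element of $G$. For each $g\in G$, let $V_g := \ker(g-\id_V) = \{v\in V \colon gv=v\}$. Since $g$ acts $\kk$-linearly on $V$, the map $g-\id_V$ is linear, so $V_g$ is a $\kk$-subspace of $V$. Moreover, $V_g = V$ holds precisely when $g$ acts trivially on all of $V$, i.~e., when $g\in \Ker(V)$. Hence for every $g\in G\setminus \Ker(V)$, the subspace $V_g$ is a \emph{proper} subspace of $V$.

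Next I would unwind the condition $G_v > \Ker(V)$. By definition, $G_v = \{g\in G \colon gv=v\}$ always contains $\Ker(V)$, so $G_v > \Ker(V)$ is equivalent to the existence of some $g\in G\setminus \Ker(V)$ with $gv=v$, that is, with $v\in V_g$. Therefore
\[ \{v\in V \colon G_v > \Ker(V)\}
   \;=\; \bigcup_{g\in G\setminus \Ker(V)} V_g\,.
\]
Since $G$ is finite, this is a finite union of proper $\kk$-subspaces of $V$, as claimed.

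For the ``in particular'' clause, each $V_g$ is Zariski-closed (being cut out by finitely many linear polynomials), so the finite union is closed. To see that it is a proper subset of $V$, one uses that $\kk$ is infinite: each proper subspace $V_g$ is contained in the zero set of some nonzero linear form $\ell_g$, so the union lies in the zero set of the product $\prod_g \ell_g$, which is a nonzero polynomial and thus does not vanish identically on $\kk^d$. No step here is a real obstacle; the lemma is essentially the observation that fixed-subspaces of non-trivially-acting elements are proper linear subspaces, combined with the fact that a finite union of proper subspaces over an infinite field cannot exhaust the space.
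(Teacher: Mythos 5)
Your proof is correct and follows essentially the same route as the paper: identify the fixed space $\operatorname{Fix}(g)=\ker(g-\id_V)$ of each $g\in G\setminus\Ker(V)$ as a proper subspace and observe that the set in question is exactly the union of these. You additionally spell out the ``in particular'' clause (closedness and properness via the infiniteness of $\kk$), which the paper leaves implicit; that extra detail is fine and does not change the argument.
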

\begin{proof}
For $g \in G \setminus \Ker(V)$, the fixed space
$\operatorname{Fix}(g) = \{ v \in V : gv = v \}$ is a proper
subspace of $V$. The set of points $v$ with $G_v > \Ker(V)$ is
given by $\bigcup_{g \in G \setminus \Ker(V)}
\operatorname{Fix}(g)$.
\end{proof}

\begin{lemma} \label{lm:sym_points_closed}
Let $\pi \in \Sym(G)$. 
Then the set of all points $v \in \Gen(V)$ with 
$\pi \in \Sym(G,v)$ 
is relatively closed in $\Gen(V)$.
\end{lemma}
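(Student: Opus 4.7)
The plan is to characterize the condition $\pi \in \Sym(G,v)$ by the vanishing of polynomial functions on $V = \kk^d$ and then intersect with the (open) set $\Gen(V)$. Throughout, let $n = \card{G}$ and $d = \dim_{\kk} V$.

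First I would reformulate the membership condition. For $v \in \Gen(V)$, the set $\{gv : g \in G\}$ spans $V$, and so $\pi \in \Sym(G,v)$ holds if and only if the prescription $gv \mapsto \pi(g)v$ extends to a (necessarily unique) linear endomorphism of $V$; equivalently, every $\kk$-linear relation $\sum_{g\in G} c_g(gv) = 0$ is also a relation $\sum_{g\in G} c_g \pi(g) v = 0$. Invertibility of such an extension is automatic, since $\{\pi(g)v\} = \{gv\}$ as sets and thus also spans $V$.

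Next I would encode this by a rank condition. Form the two $d \times n$ matrices $\Phi_v$ with columns $(gv)_{g\in G}$ and $\Phi_v^{\pi}$ with columns $(\pi(g)v)_{g\in G}$, and stack them into the $2d \times n$ matrix
\[
    M_v := \begin{pmatrix} \Phi_v \\ \Phi_v^{\pi} \end{pmatrix}.
\]
For $v \in \Gen(V)$ both blocks have rank $d$, since $\Phi_v^{\pi}$ is just $\Phi_v$ with its columns permuted by~$\pi$. The inclusion $\ker \Phi_v \subseteq \ker \Phi_v^{\pi}$ above is then equivalent to $\operatorname{rank}(M_v) = d$, i.\,e.\ to $\operatorname{rank}(M_v) \leq d$.

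Finally I would read off the topology. Since the action of $G$ on $V$ is $\kk$-linear, each entry of $M_v$ is a linear form in the coordinates of $v$; hence each $(d+1)\times(d+1)$ minor of $M_v$ is a polynomial in those coordinates. The simultaneous vanishing of all these minors defines a Zariski-closed subset $C \subseteq V$, and by the preceding step
\[
    \{v \in \Gen(V) : \pi \in \Sym(G,v)\} \;=\; C \cap \Gen(V),
\]
which is relatively closed in $\Gen(V)$. I do not expect any serious obstacle: the only subtle point is verifying that $\Phi_v^{\pi}$ inherits full rank $d$ from $\Phi_v$ (which makes the inclusion of kernels equivalent to their equality, and hence to the rank bound on $M_v$); everything else is a direct translation of linear-algebraic data into polynomial conditions.
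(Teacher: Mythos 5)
Your argument is correct, and it takes a genuinely different and somewhat slicker route than the paper's proof. The paper covers $\Gen(V)$ by the principal open sets $O_f$ with $f(X)=\det(g_1X,\dotsc,g_dX)$, constructs on each $O_f$ the rational matrix $A(X)=(\pi(g_1)X,\dotsc,\pi(g_d)X)\cdot(g_1X,\dotsc,g_dX)^{-1}$, and identifies $Q(\pi)\cap O_f$ as the vanishing set of the polynomial entries of $f(X)\bigl(A(X)gX-\pi(g)X\bigr)$. You instead observe directly that for $v\in\Gen(V)$ membership $\pi\in\Sym(G,v)$ is equivalent to $\ker\Phi_v\subseteq\ker\Phi_v^\pi$, hence (since both $d\times n$ blocks have rank $d$, forcing equality of kernels and, by duality, of row spaces) to the rank of the $2d\times n$ stacked matrix $M_v$ being $\leq d$, i.e.\ to the vanishing of all $(d+1)\times(d+1)$ minors. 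That is a single global Zariski-closed set $C\subseteq V$ with $Q(\pi)=C\cap\Gen(V)$, which is cleaner and avoids the cover-by-$O_f$ bookkeeping. What the paper's route buys, however, is the explicit rational matrix $A(X)=D_X(\pi)$: this object is reused immediately in Remark~\ref{rmk:gen_mat_eval}, in Lemma~\ref{lm:gen_closure_char}, and in Lemma~\ref{lm:eval_gen}, so the more laborious local construction is not wasted effort in the paper's overall development. As a standalone proof of this lemma, yours is entirely adequate; one should just make sure (as you implicitly do) that the passage from $\ker\Phi_v\subseteq\ker\Phi_v^\pi$ to $\operatorname{rank}(M_v)=d$ uses only dimension counting and annihilators, which works over any field.
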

\begin{proof}
Define
\[ Q(\pi) :=
   \{ v\in \Gen(V) 
      \colon
      \pi \in \Sym(G,v)
   \} \,.
\]
Let $g_1$, $\dotsc$, $g_d\in G$ be elements such that
$\{g_1v, \dotsc, g_dv\}$ is a basis of $V$ for some $v\in V$.
Equivalently, 
$f(X) := \det(g_1 X, \dotsc, g_d X)$,
where $X = (X_1, \dots, X_d)^t \in \kk (X)^d$,
is not the zero polynomial.
Since $\Gen(V)$ is the union of the non-vanishing sets $O_f$,
where $f$ runs through the polynomials constructed in the above way,
it suffices to show that $Q(\pi) \cap O_f$ is closed in $O_f$.

Since $f(X)\neq 0$, the matrix
\[  A := A(X) 
      := (\pi(g_1) X, \dotsc,\pi(g_d) X) \cdot 
         (g_1 X, \dotsc, g_d X)^{-1}
\] 
is defined and has entries in the function field $\kk (X)$.
Moreover, for $v\in O_f$, we can evaluate $A(X)$ at $v$,
and $A(v)$ is the unique matrix mapping the basis vectors
$g_iv$ to $\pi(g_i)v$.
It follows that for $v\in O_f$, we have
$\pi \in \Sym(G,v)$ if and only if
$A(v)gv = \pi(g)v$ for \emph{all} $g\in G$.
Therefore, 
$Q(\pi)\cap O_f$
is exactly the common vanishing set in $O_f$ 
of all the entries of all the vectors
$A(X) g X - \pi(g) X$ ($g \in G$).
Since the entries of 
$f(X)\big( A(X) g X - \pi(g) X \big)$ are polynomials,
this finishes the proof.
\end{proof}

\begin{rmk}\label{rmk:gen_mat_eval}
  In the proof, we defined a $d\times d$-matrix $A(X)$ 
  with entries in $\kk (X)$,
  depending on the choice of 
  $d$ elements $g_1$, $\dotsc$, $g_d\in G$ such that
  \[ f(X) = \det( g_1 X, \dotsc, g_dX)\neq 0 \,.
  \]
  This matrix has the following property:
  For any $v\in Q(\pi)\cap O_f$, we have
  $A(v) = D_v(\pi)$.
  In particular, $A(v)$ is invertible, and so
  when $Q(\pi)\cap O_f \neq \emptyset$, 
  then $A(X)$ must be invertible.
\end{rmk}

\begin{rmk}
  The set of $v\in V$ such that $\pi \in \Sym(G,v)$
  is in general not a closed subset of $V$ itself.
  For an example, let 
  $G = D_4$ be the dihedral group of order $8$,
  and represent $G$ as the subgroup of 
  $\GL(2,\reals)$ preserving a (fixed) square 
  which is centered at the origin.
  Let $V$ be the space of $2\times 2$-matrices over $\reals$,
  on which $G$ acts by left multiplication.
  Let $\pi\in \Sym(G)$ be the permutation sending each element
  to its inverse.
  Then it is not difficult to verify that
  $\{v\in V \colon \pi \in \Sym(G,v)\}
   = \Gen(V) \cup \{0\}$.
  (This is also a consequence of a general result
   about representation polytopes~\cite[Theorem~8.6]{FrieseLadisch16}.)
\end{rmk}

\begin{thm} \label{thm:existence_generic}
Let $V$ be a cyclic $\kk G v$-module,
where $\kk $ is infinite.
The set of generic points is non-empty and open 
(in the Zariski topology).
In particular, there are generic points.
\end{thm}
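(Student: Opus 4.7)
The plan is to express the set of generic points as the intersection, inside $\Gen(V)$, of the complements of finitely many proper Zariski-closed subsets, and then invoke the irreducibility of $V = \kk^d$ (which holds because $\kk$ is infinite) to conclude that this intersection is non-empty and open. The ambient set $\Gen(V)$ is itself non-empty by hypothesis and open by Lemma~\ref{lm:gen_open}, so the entire argument takes place inside this open set.

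Beyond being a generator, two further conditions on $v$ must be imposed to obtain a generic point: the stabilizer condition $G_v = \Ker(V)$ and the symmetry condition $\Sym(G,v) = \Sym(G,V)$. The stabilizer condition is immediately handled by Lemma~\ref{lm:triv_stab_open}, which exhibits its failure set as a finite union of proper subspaces of $V$. For the symmetry condition, I would first observe that the inclusion $\Sym(G,V) \subseteq \Sym(G,v)$ is automatic for every $v \in \Gen(V)$, by the definition $\Sym(G,V) = \bigcap_{v \in \Gen(V)} \Sym(G,v)$. Consequently, the set of $v$ for which $\Sym(G,v)$ strictly contains $\Sym(G,V)$ equals
\[
 \bigcup_{\pi \in \Sym(G) \setminus \Sym(G,V)} Q(\pi), \qquad Q(\pi) := \{v \in \Gen(V) : \pi \in \Sym(G,v)\}.
\]
By Lemma~\ref{lm:sym_points_closed} each $Q(\pi)$ is relatively closed in $\Gen(V)$, and the union is finite since $\Sym(G)$ is finite.

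The key point, and the only real subtlety, is that each $Q(\pi)$ appearing in this union is a \emph{proper} subset of $\Gen(V)$. But this is precisely what the intersection definition of $\Sym(G,V)$ provides for free: if $\pi \notin \Sym(G,V)$, there must exist some $v_0 \in \Gen(V)$ with $\pi \notin \Sym(G,v_0)$, i.e.\ $v_0 \notin Q(\pi)$. The set of non-generic points in $\Gen(V)$ is therefore a finite union of proper relatively closed subsets. Its complement is open in $\Gen(V)$, hence open in $V$, and non-empty because the irreducibility of $V$ prevents a finite union of proper closed subsets from covering a non-empty open set. This yields both the openness and the existence of generic points.
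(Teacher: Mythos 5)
Your proof is correct and follows exactly the route the paper intends: the paper's proof is the one-line "immediate from Lemmas~\ref{lm:gen_open}, \ref{lm:triv_stab_open} and~\ref{lm:sym_points_closed}," and you have simply spelled out the details, including the (genuinely worth noting) observation that properness of each $Q(\pi)$ with $\pi \notin \Sym(G,V)$ is built into the intersection definition of $\Sym(G,V)$.
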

\begin{proof}
This is immediate from
Lemmas~\ref{lm:gen_open}, \ref{lm:triv_stab_open}
and~\ref{lm:sym_points_closed}.
\end{proof}

Of course, the last result contains Theorem~\ref{ithm:gen_pts_zariski}
form the introduction.

As before, let $G$ act linearly on $\kk^d$, 
by some matrix representation $D\colon G\to \GL(d,\kk )$.
Then the same representation makes $\EE^d$ into an
$\EE G$-module, for every field extension
$\EE$ of $\kk $.
We apply this to the function field $\EE = \kk (X)$,
where $X = (X_1, \dots, X_d)^t$ is a vector of indeterminates.
Not surprisingly, $X$ is a generic vector in $\kk (X)^d$:

\begin{lemma} \label{lm:gen_closure_char}
Let $G$ act on $V = \kk^d$ and $\kk (X)^d$ 
by a representation
$D\colon G\to \GL(d,\kk )$, 
where $X = (X_1, \dots, X_d)^t$ is a vector of indeterminates. 
Then\/
\[ \Sym(G,V) = \Sym(G,X) \,.
\]
\end{lemma}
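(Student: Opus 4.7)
The plan is to use the rational matrix $A(X)$ from Remark~\ref{rmk:gen_mat_eval} as a bridge between the symmetries of the ``universal'' generator $X\in \kk(X)^d$ and the symmetries of ordinary generators $v\in \Gen(V)\subseteq \kk^d$. The key principle to exploit is that polynomial identities in $\kk[X_1,\dotsc,X_d]$ that hold on a non-empty Zariski-open set hold identically, since $\kk$ is infinite and so $V=\kk^d$ is Zariski-irreducible.

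For the inclusion $\Sym(G,V)\subseteq \Sym(G,X)$, I would start from $\pi\in \Sym(G,V)$ and pick $g_1,\dotsc,g_d\in G$ so that $f(X)=\det(g_1X,\dotsc,g_dX)\neq 0$, forming the matrix $A(X)$ from the Remark. By the proof of Lemma~\ref{lm:sym_points_closed}, the entries of $f(X)(A(X)gX-\pi(g)X)$ are polynomials in $\kk[X]$, and they vanish on the non-empty open set $O_f\subseteq \Gen(V)$ because $\pi\in \Sym(G,v)$ for every $v\in \Gen(V)$; hence they vanish identically, giving $A(X)gX=\pi(g)X$ in $\kk(X)^d$ for every $g$. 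Invertibility of $A(X)$ is automatic: since $\pi$ permutes $G$, the sets $\{gX\}_{g\in G}$ and $\{\pi(g)X\}_{g\in G}$ coincide and both span $\kk(X)^d$, so the image of $A(X)$ contains a basis. Therefore $\pi\in \Sym(G,X)$.

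For the reverse inclusion $\Sym(G,X)\subseteq \Sym(G,V)$, take $\pi\in \Sym(G,X)$ witnessed by some $A\in \GL(d,\kk(X))$ with $A\cdot gX=\pi(g)X$ for all $g\in G$, and fix an arbitrary $v\in \Gen(V)$. I would choose $g_1,\dotsc,g_d$ with $\{g_iv\}$ a basis of $V$, equivalently $f(v)\neq 0$, and form $\tilde A(X)$ from this choice as in the Remark. Since $A(X)$ and $\tilde A(X)$ both send the $\kk(X)$-basis $\{g_iX\}$ to $\{\pi(g_i)X\}$, they coincide as matrices over $\kk(X)$; in particular $\tilde A(X)gX=\pi(g)X$ for every $g\in G$. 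By Cramer's rule the entries of $\tilde A(X)$ have the form $p(X)/f(X)$, so after multiplying by $f(X)$ this becomes a polynomial identity that evaluates at $v$, yielding $\tilde A(v)gv=\pi(g)v$ for all $g\in G$. Finally, $\tilde A(v)$ sends the spanning set $Gv$ onto $\pi(G)v=Gv$, so it is a surjective, hence invertible, linear endomorphism of the finite-dimensional $V$, and $\pi\in \Sym(G,v)$.

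The main technical obstacle is the interplay between the rational-function picture and the pointwise picture: for every $v\in \Gen(V)$ one must be able to choose $g_1,\dotsc,g_d$ whose associated $f$ is non-zero at $v$, so that $\tilde A(X)$ can be evaluated there, and then the rational identity $A(X)=\tilde A(X)$ must be converted to a genuine polynomial identity before evaluation. This is exactly why Remark~\ref{rmk:gen_mat_eval} was formulated with flexibility in the choice of the $g_i$'s.
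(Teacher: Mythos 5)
Your proof is correct, and the first inclusion is essentially the paper's argument: you evaluate Remark~\ref{rmk:gen_mat_eval}'s matrix $A(X)$ on the open set $O_f$ and push the resulting polynomial identities to all of $\kk(X)^d$ by irreducibility. For the reverse inclusion, however, your route is genuinely different from the paper's. The paper observes that the non-vanishing locus $O$ of a common denominator of $D_X(\pi)$ is a non-empty open set on which $\pi$ is an orbit symmetry, and then invokes Lemma~\ref{lm:sym_points_closed} together with the irreducibility of $\Gen(V)$ to conclude that $\pi\in\Sym(G,v)$ for every $v$ in the closure of $O$, which is all of $\Gen(V)$. You instead argue pointwise: given an arbitrary $v\in\Gen(V)$, you re-parametrize the (unique) map $D_X(\pi)$ using a basis $\{g_iv\}$ adapted to $v$, observe via Cramer's rule that the resulting matrix has denominator exactly $f(X)=\det(g_1X,\dotsc,g_dX)$, which is non-zero at $v$, and then evaluate. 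This is more constructive and avoids Lemma~\ref{lm:sym_points_closed} entirely; in fact it is precisely the idea the paper later isolates as Lemma~\ref{lm:eval_gen}, so you have effectively proved a sharper statement (that $D_X(\pi)$ can be evaluated at \emph{every} generator) and read off the inclusion as a corollary. Both approaches are sound; the paper's keeps the topological machinery already developed in play, while yours is more self-contained and directly exhibits the witnessing linear map for each $v$.
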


This lemma also shows that Definition~\ref{df:generic}
is equivalent to the definition of generic points
from our previous paper~\cite[Definition~4.4]{FrieseLadisch16}.

\begin{proof}[Proof of Lemma~\ref{lm:gen_closure_char}]
We first show 
$\Sym(G,V) \subseteq \Sym(G,X)$.
Let $\pi \in \Sym(G,V)$, 
and let $f(X)= \det(g_1X, \dotsc, g_d X)$ and $A(X)$ be as in
Remark~\ref{rmk:gen_mat_eval}. 
It follows from this remark that
$A(v) = D_v(\pi)$ for all $v \in O_f$,
so $A(v) gv = \pi(g)v$ for all $v\in O_f$.
Since $O_f$ is non-empty and open, 
we have $A(X)  gX = \pi(g)  X$ for all $g \in G$, 
which means $\pi \in \Sym(G,X)$.

Conversely, suppose $\pi \in \Sym(G,X)$ is realized by a matrix 
$A(X) := D_X(\pi) \in \GL(d,\kk (X))$. 
The set $O$ of generators on which $A(X)$ can be evaluated,
is non-empty and open
($O$ is the non-vanishing set of a common denominator
of all the entries of $A(X)$).
Moreover, $\pi \in \Sym(G,v)$ for all $v \in O$. 
By Lemma~\ref{lm:sym_points_closed}, 
$\pi$ has to be an orbit symmetry for all elements 
in the closure of $O$ in $\Gen(V)$. 
Since $\Gen(V)$ is irreducible (as topological space), 
this closure is equal to $\Gen(V)$, which shows 
$\pi \in \Sym(G,V)$.
\end{proof}

The following proposition will be an important tool in 
Section~\ref{sec:characters}. 
It means that the generic symmetry group of a 
$\kk G$-module does not change if we extend the field. 
In particular, we are always allowed to assume that $\kk $ is
algebraically closed.

\begin{prp}\label{prp:scalar_ext}
Let\/ $\EE$ be an extension field of\/ $\kk $, 
and let $V $ be a cyclic $\kk G$-module.
Then $\Sym(G,V) = \Sym(G,V \otimes_{\kk } \EE)$.
\end{prp}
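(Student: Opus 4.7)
The plan is to reduce everything to Lemma~\ref{lm:gen_closure_char}, which describes $\Sym(G,V)$ very concretely in terms of a single generic vector $X$ of indeterminates. By Lemma~\ref{lm:iso_inv} I may assume $V=\kk^d$ and that $G$ acts via a matrix representation $D\colon G\to \GL(d,\kk)$. Then $V\otimes_\kk \EE \iso \EE^d$ carries the \emph{same} matrix representation, now viewed as taking values in $\GL(d,\EE)$. Applying Lemma~\ref{lm:gen_closure_char} on each side reduces the problem to proving
\[ \Sym(G,X)\text{ computed in }\kk(X)^d \;=\; \Sym(G,X)\text{ computed in }\EE(X)^d,
\]
with $X=(X_1,\dots,X_d)^t$ the same indeterminate vector in both cases.

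The inclusion $\subseteq$ is immediate: any matrix in $\GL(d,\kk(X))$ realising $\pi$ still belongs to $\GL(d,\EE(X))$ and still realises $\pi$ there, because $\kk(X)\subseteq \EE(X)$. For the reverse inclusion, the key tool is Remark~\ref{rmk:gen_mat_eval}: fix $g_1,\dots,g_d\in G$ with $f(X):=\det(g_1 X,\dots,g_d X)\neq 0$ in $\kk(X)$ (such elements exist because $V$ is cyclic), and form
\[ A(X) := (\pi(g_1)X,\dots,\pi(g_d)X)\cdot (g_1 X,\dots,g_d X)^{-1}\,. \]
The entries of $A(X)$ already lie in $\kk(X)$. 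If $\pi$ is realised over $\EE(X)$ by some $B(X)\in \GL(d,\EE(X))$, then $B(X)$ is determined by its action on the basis $\{g_i X\}$ of $\EE(X)^d$, forcing $B(X)=A(X)$. The orbit symmetry identities $A(X)gX=\pi(g)X$ for all $g\in G$ thus hold in $\EE(X)^d$; but both sides have entries already in $\kk(X)$, so these identities hold in $\kk(X)^d$ as well. Hence $\pi\in \Sym(G,X)$ over $\kk(X)$, and thus $\pi\in \Sym(G,V)$.

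The proof is mostly bookkeeping once Lemma~\ref{lm:gen_closure_char} and Remark~\ref{rmk:gen_mat_eval} are in hand. The one point that needs care — and is the closest thing to an obstacle — is making sure every object constructed (the choice of the $g_i$, the polynomial $f(X)$, the realising matrix $A(X)$) is \emph{defined over} $\kk$, so that the comparison between $\kk(X)$ and $\EE(X)$ is actually meaningful; once this is observed, the equality of identities across the field extension is automatic, since a rational function in $\kk(X)$ vanishes in $\EE(X)$ iff it vanishes in $\kk(X)$.
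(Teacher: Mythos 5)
Your proof is correct and follows the same route as the paper: reduce to $V=\kk^d$, apply Lemma~\ref{lm:gen_closure_char} to $\kk^d$ and to $\EE^d$, and compare $\Sym(G,X)$ computed over $\kk(X)$ with $\Sym(G,X)$ computed over $\EE(X)$. The paper treats the last comparison as immediate; you have simply made explicit (via Remark~\ref{rmk:gen_mat_eval}) why the realising matrix is automatically defined over $\kk(X)$, which is a small but genuine gap in the paper's two-line argument.
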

\begin{proof}
Without loss of generality, we can assume that
$V=\kk^d$ and $V\otimes_{\kk }\EE = \EE^d$.
Let $X=(X_1,\dotsc, X_d)^t$ be a vector of indeterminates
over $\EE$.
By Lemma~\ref{lm:gen_closure_char} applied first to $\kk^d$,
then to $\EE^d$, we have,
$\Sym(G,V) = \Sym(G,X)= \Sym(G, V\otimes_{\kk } \EE)$.
\end{proof}

\begin{rmk}
  In view of the above results, it seems natural to define
  generic symmetries and generic points for possibly finite
  fields as follows:
  A point $v\in V$ is generic, when
  $v\in \Gen(V)$ and $G_v = \Ker(V)$, and
  when $\Sym(G,v)= \Sym(G,X)$,
  where $X$ is a vector of indeterminates.  
  The generic symmetry group can be defined 
  as the group $\Sym(G,X)$
  or equivalently as the group
  $\Sym(G, V\otimes_{\kk } \EE)$,
  where $\EE$ is ``sufficiently large''
  (e.~g., infinite).
  When $\kk $ is finite, then $V$ may not contain
  generic points in this sense, 
  but $V\otimes_{\kk } \overline{\kk }$ does,
  where $\overline{\kk }$ is the algebraic closure of $\kk $.  
\end{rmk}

\section{The generic symmetry group}
\label{sec:gen_syms}
Recall the notations
\[\GL(Gv) 
 := D_v(\Sym(G,v)) \subseteq \GL(d,\kk )
\]
and 
\[\GL(GX) 
          := D_X(\Sym(G,X)) 
          \subseteq \GL(d,\kk (X))\,.
\]
\begin{lemma}\label{lm:eval_gen} 
For $\pi\in \Sym(G,V) = \Sym(G,X)$,
the matrix $A(X):= D_X(\pi) \in \GL(d,\kk (X))$
can in fact be evaluated for all $v\in \Gen(V)$,
and evaluates to $A(v) = D_v(\pi)$.
Thus we have a commutative diagram:
\[ \begin{tikzcd}
     \Sym(G,V) \rar{D_X} 
               \drar[swap]{D_v}
      & \GL(GX) \dar{\operatorname{eval}_v} 
      \\
      & \GL(Gv)
   \end{tikzcd} 
\]
\end{lemma}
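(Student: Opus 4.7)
The plan is to combine Proposition~\ref{prp:scalar_ext} / Lemma~\ref{lm:gen_closure_char} (which identify $\Sym(G,V)$ and $\Sym(G,X)$) with the explicit matrix formula from Remark~\ref{rmk:gen_mat_eval}. The subtle point is that $A(X) = D_X(\pi)$ is \emph{a priori} only a matrix over $\kk(X)$, so before we can talk about $A(v)$ we must rule out that some denominator vanishes at $v$. The idea is to exploit the freedom to choose the basis $g_1v,\dots,g_d v$ depending on $v$, rather than once and for all.

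First I would fix $\pi \in \Sym(G,V) = \Sym(G,X)$ and pick an arbitrary $v \in \Gen(V)$. Since $v$ is a generator, there exist elements $g_1,\dots,g_d \in G$ such that $g_1 v,\dots,g_d v$ is a basis of $V$; equivalently, the polynomial $f(X) = \det(g_1 X,\dots,g_d X)$ satisfies $f(v) \neq 0$. By definition of $D_X(\pi)$, the matrix $A(X)$ satisfies $A(X)\, g_i X = \pi(g_i)\, X$ for $i=1,\dots,d$, and because the matrix $(g_1 X,\dots,g_d X)$ has determinant $f(X) \neq 0$ in $\kk(X)$, we can solve uniquely to obtain
\[
  A(X) \;=\; \bigl(\pi(g_1) X, \dots, \pi(g_d) X\bigr)\cdot \bigl(g_1 X,\dots, g_d X\bigr)^{-1}.
\]
Using Cramer's rule, each entry of $A(X)$ is a polynomial in $X$ divided by $f(X)$.

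Since $f(v) \neq 0$, this presentation shows that $A(X)$ can be evaluated at $v$; call the result $A(v)$. By construction $A(v)\, g_i v = \pi(g_i)\, v$ for $i = 1,\dots,d$. On the other hand, $\pi \in \Sym(G,V) \subseteq \Sym(G,v)$ by definition of the generic symmetry group, so $D_v(\pi)$ is defined and satisfies the same equations $D_v(\pi)\, g_i v = \pi(g_i)\, v$. Since $g_1 v,\dots,g_d v$ is a basis, $A(v)$ and $D_v(\pi)$ agree on a basis, hence $A(v) = D_v(\pi)$.

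The main (mild) obstacle is purely formal: the expression for $A(X)$ written above depends on the choice of $g_1,\dots,g_d$, and one might worry that different choices give different denominators. This is handled by the fact that $A(X)$, viewed as a fixed element of $\GL(d,\kk(X))$, is uniquely determined by $\pi$; the presentations for different choices of $g_i$ merely show that $A(X)$ admits, for each $v \in \Gen(V)$, some representation with denominator non-vanishing at $v$, which is all that is needed to make sense of $A(v)$. The commutative diagram then encodes exactly the identity $D_v = \mathrm{eval}_v \circ D_X$, which is what we just established.
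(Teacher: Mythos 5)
Your proposal is correct and follows essentially the same approach as the paper's proof: for each $v\in\Gen(V)$, choose $g_1,\dots,g_d\in G$ with $\{g_1 v,\dots,g_d v\}$ a basis, write $A(X)=(\pi(g_1)X,\dots,\pi(g_d)X)(g_1X,\dots,g_dX)^{-1}$ with denominator $f(X)=\det(g_1X,\dots,g_dX)$, and observe that $f(v)\neq 0$ allows evaluation at $v$. The only addition you make is the explicit discussion of why the choice of $g_1,\dots,g_d$ does not matter, which the paper leaves implicit but is correct.
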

\begin{proof}
  Let $v$ be a generating point, and let
  $g_1$, $\dotsc$, $g_d\in G$ be elements such that
  $\{g_1 v,\dotsc, g_d v\}$ is a basis of $\kk^d$.
  Then $\{g_1 X, \dotsc, g_d X\}$ is a basis of
  $\kk (X)^d$,
  and we must have
  \[A(X) = (\pi(g_1)X, \dotsc, \pi(g_d)X) \cdot 
          (g_1 X, \dotsc, g_d X)^{-1}\,.
  \]
  As $f(v)\neq 0$, where
  $f(X)= \det(g_1 X, \dotsc, g_d X)$, 
  it follows that $A(X)$ can be evaluated at $v$.
  Also, $A(v)=D_v(\pi)$ is clear then.
  Since $v \in \Gen(V)$ was arbitrary, the claim follows.
\end{proof}  

It is clear that the map $\GL(GX)\to \GL(Gv)$
is an isomorphism when $v$ is generic.
Somewhat more is true.

\begin{lemma}\label{lm:inj_rep_gen_sym}
  Let $v\in \Gen(V)$ be such that the characteristic
  of\/ $\kk $ does not divide the order of the stabilizer
  $H=G_v$ of $v$ in $G$.
  Then evaluation at $v$ yields an injective map
  $\GL(GX) \to \GL(Gv)$. 
\end{lemma}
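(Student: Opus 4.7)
The plan is to reduce the injectivity claim to showing that every $\pi \in \Sym(G,V)$ with $D_v(\pi) = \id$ already satisfies $D_X(\pi) = \id$. Indeed, by Lemma~\ref{lm:eval_gen}, $\operatorname{eval}_v \circ D_X = D_v$, and $D_X$ is surjective onto $\GL(GX)$ by definition, so this equivalence is immediate. By Lemma~\ref{lm:rep_kernel}, $D_v(\pi) = \id$ amounts to $\pi(g) \in gH$ for every $g$, and the analogous statement for $D_X$ (using that $G$ stabilizes $X$ only through $\Ker(V)$) amounts to $\pi(g) \in g\Ker(V)$ for every $g$. Writing $\pi(g) = g s_g$ with $s_g \in H$, the task reduces to showing $s_g \in \Ker(V)$ for every $g$, under the hypothesis $\operatorname{char}(\kk) \nmid |H|$.

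The main technical step is a linearization of $A(X) := D_X(\pi)$ at $v$. Since $A$ is a matrix of rational functions, regular at $v$ with $A(v) = I$, its formal partial derivatives yield a $\kk$-linear map $C\colon V \to \End(V)$ with $A(v + \eps w) \equiv I + \eps\, C(w) \pmod{\eps^2}$; no division is involved, so this is valid in any characteristic. Substituting $v + \eps w$ for $X$ in $A(X)\,gX = \pi(g)X$ and comparing the coefficients of $\eps$ yields
\[
 C(w)\,gv \;=\; \bigl(D(\pi(g)) - D(g)\bigr)\,w
 \qquad \text{for all } g \in G,\ w \in V.
\]
Because $C(w)$ is a genuine linear endomorphism of $V$, the left side depends on $g$ only through $gv$; for $g_2 = g_1 h$ with $h \in H$, this consistency, after substituting $\pi(g_i) = g_i s_{g_i}$, boils down to the group-algebra relation
\[
 s_{g_1} - h\, s_{g_1 h} + h - 1 \;\in\; \Ann_{\kk H}(V).
\]

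The concluding step is an averaging argument, and it is the only place where $\operatorname{char}(\kk) \nmid |H|$ is used. Fix $g_1 = g$ and sum the above relation over $h \in H$. The crucial computation is that
\[
  \sum_{h \in H} h\, s_{gh} \;=\; g^{-1}\!\!\sum_{g' \in gH} \pi(g') \;=\; g^{-1} \cdot g \sum_{h \in H} h \;=\; \sum_{h \in H} h,
\]
where the middle equality uses that $\pi \in \Ker D_v$ permutes the coset $gH$ elementwise. This cancels exactly the $\sum_h h$ coming from the other summand, leaving $|H|\,(s_g - 1) \in \Ann_{\kk H}(V)$. Dividing by $|H| \in \kk^\times$ gives $D(s_g) = I$, i.e.\ $s_g \in \Ker(V)$, for every~$g$, which completes the argument. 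The main obstacle I anticipate is the linearization step: one should formulate it purely in terms of formal partial derivatives of rational functions, thereby sidestepping any analytic or characteristic-zero assumption; after that, everything reduces to elementary identities in $\kk H$ together with invertibility of $|H|$.
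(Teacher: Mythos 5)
Your proof is correct but follows a genuinely different and somewhat heavier route than the paper's. The paper argues directly in $\kk(X)$: since $A(v)\,gv = gv$ for all $g$, the permutation $\pi$ realized by $A(X)$ satisfies $\pi(g)\in gH$, so for each fixed $g$ the matrix $A(X)$ permutes the vectors $ghX$ with $h\in H$; hence $A(X)$ fixes each averaged vector $s_g(X) := \frac{1}{\card{H}}\sum_{h\in H} ghX$, and since $s_g(v) = gv$ these vectors span $\kk(X)^d$, forcing $A(X)=I$. Your argument instead linearizes $A(X)$ at $v$ over $\kk[\eps]/(\eps^2)$, extracts the identity $C(w)\,gv = \bigl(D(\pi(g)) - D(g)\bigr)w$, exploits the well-definedness of $C(w)$ on the coinciding vectors $gv=ghv$ to obtain the relation $s_{g} - h\,s_{gh} + h - 1 \in \Ann_{\kk G}(V)$, and then sums this relation over $h\in H$, using the cancellation $\sum_{h\in H}h\,s_{gh} = \sum_{h\in H}h$ (a consequence of $\pi$ permuting the coset $gH$). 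Both proofs invoke invertibility of $\card{H}$ in $\kk$ exactly once, to average over $H$; the paper averages the orbit vectors $ghX$ themselves, whereas you average a group-algebra relation obtained from a first-order deformation argument. The paper's route is shorter and more elementary; yours is valid and exhibits an explicit linear obstruction to $D_X(\pi)$ being trivial, but the infinitesimal machinery is more than the statement requires.
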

\begin{proof}
  Suppose that $A(X)\in \GL(GX)$ evaluates to the identity.
  Thus $A(v) gv = gv$ for all $g\in G$.
  This means that $A(X)$ maps the set 
  $gHX$ onto itself.
  Define 
  \[ s_g(X) := \frac{1}{\card{H}} 
               \sum_{h\in H}  ghX \in \kk [X]^d\,.
  \]
  (Here we need that $\card{H}$ is invertible
   as an element of $\kk $.)
  Then $A(X)s_g(X) = s_g(X)$ and
  $s_g(v) = gv$.
  As $V= \kk^d$ is the $\kk $-linear span of the elements
  $s_g(v)=gv$ ($g\in G$),
  it follows that
  $\kk (X)^d$ is the $\kk (X)$-linear span of the elements
  $s_g(X)$ ($g\in G$).
  Since $A(X)s_g(X) = s_g(X)$ for all $g$,
  it follows that $A(X) = I$ as claimed.
\end{proof}

Let $\widehat{G}=\GL(Gv)$,
where $Gv$ spans $V$.
Then we can view $V$ as a $\kk \widehat{G}$-module,
and we can speak of generic points for $\widehat{G}$.
In our previous paper, we showed that
when $\kk =\reals$ and $w$ is generic for $\widehat{G}$,
then 
$\GL(\widehat{G}w) = \widehat{G}$
\cite[Corollary~5.4]{FrieseLadisch16}.
In particular, we can not get an infinitely increasing
chain of generic symmetry groups.
This can be generalized as follows:
\begin{co}\label{co:closure}
  Let $\widehat{G}=\GL(Gv)$, 
  where $v\in \Gen(V)$ (with respect to the action of $G$),
  and let $w\in V$ be generic for $\widehat{G}$.
  If the characteristic of\/ $\kk $ does not divide
  $\card{\widehat{G}_v}$, then
  $\widehat{G} = \GL(\widehat{G}w)$.
\end{co}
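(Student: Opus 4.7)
The inclusion $\widehat{G}\subseteq\GL(\widehat{G}w)$ is immediate from the definitions, so the content is in the reverse inclusion. My plan is to show that the generic linear symmetry group of $\widehat{G}$ acting on $V$---realised as $\GL(\widehat{G}X)\subseteq\GL(d,\kk(X))$---is already equal to $\widehat{G}$ itself, and then transport this identity down to $w$ through the commutative diagram of Lemma~\ref{lm:eval_gen}.

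The first step is to verify that $v$ is also a generator for $V$ viewed as a $\kk\widehat{G}$-module, and in fact that $\widehat{G}v=Gv$ as sets. The inclusion $Gv\subseteq\widehat{G}v$ holds because the given representation $G\to\GL(V)$ factors through $\widehat{G}$; the reverse inclusion $\widehat{G}v\subseteq Gv$ holds because every element of $\widehat{G}=\GL(Gv)$ permutes $Gv$ and $v\in Gv$. In particular, $V=\kk\widehat{G}v$ and $\GL(\widehat{G}v)=\GL(Gv)=\widehat{G}$. Since the characteristic of $\kk$ does not divide $\card{\widehat{G}_v}$ by hypothesis, Lemma~\ref{lm:inj_rep_gen_sym} applied to $\widehat{G}$ acting on $V$ with generator $v$ yields an injection $\operatorname{eval}_v\colon\GL(\widehat{G}X)\hookrightarrow\GL(\widehat{G}v)=\widehat{G}$.

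Next I would note that $\widehat{G}$ sits inside $\GL(\widehat{G}X)$ as constant matrices: the left regular action of $\widehat{G}$ on itself satisfies $\lambda_{\widehat{g}}\in\Sym(\widehat{G},X)$ with $D_X(\lambda_{\widehat{g}})=\widehat{g}$, viewed as an element of $\GL(d,\kk)\subseteq\GL(d,\kk(X))$. Evaluation at $v$ leaves constant matrices unchanged, so the restriction of $\operatorname{eval}_v$ to this copy of $\widehat{G}$ is the identity; combined with the injectivity from the previous step, this forces $\GL(\widehat{G}X)=\widehat{G}$. Finally, since $w$ is generic for $\widehat{G}$, Lemma~\ref{lm:eval_gen} (applied to $\widehat{G}$) gives $\GL(\widehat{G}w)=\operatorname{eval}_w(\GL(\widehat{G}X))=\operatorname{eval}_w(\widehat{G})=\widehat{G}$, again because constant matrices are unaffected by evaluation. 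I expect no genuine obstacle: the only subtle point is the identification $\widehat{G}v=Gv$, which is what makes $\GL(\widehat{G}v)$ coincide with $\widehat{G}$ rather than with some larger group; after that, Lemma~\ref{lm:inj_rep_gen_sym} does the heavy lifting.
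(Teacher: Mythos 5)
Your proof is correct and follows essentially the same route as the paper: both hinge on Lemma~\ref{lm:eval_gen}, Lemma~\ref{lm:inj_rep_gen_sym}, and the observation that $\widehat{G}v = Gv$ so that $\GL(\widehat{G}v)=\widehat{G}$. The only stylistic difference is that the paper concludes by a cardinality sandwich ($\widehat{G}\leq\GL(\widehat{G}w)$ while $\GL(\widehat{G}w)$ embeds into $\widehat{G}$, hence equality), whereas you first pin down $\GL(\widehat{G}X)=\widehat{G}$ exactly as constant matrices (using that $\operatorname{eval}_v$ is both injective and restricts to the identity on the constant copy of $\widehat{G}$) and then push this forward through $\operatorname{eval}_w$; this is a clean reorganization of the same argument.
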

\begin{proof}
  By Lemma~\ref{lm:eval_gen} applied to $\widehat{G}$ and $w$,
  it follows that 
  $\GL(\widehat{G}X) \iso \GL(\widehat{G}w)$ 
  by evaluation at $w$.
  (Since $w$ is generic for $\widehat{G}$,
  we have that 
  $\Sym(\widehat{G},V) = \Sym(\widehat{G}, w)$.)
  By Lemma~\ref{lm:inj_rep_gen_sym} applied to
  $\widehat{G}$ and $v$, it follows that
  $\GL(\widehat{G}X)$ maps injectively into
  $\GL(\widehat{G}v)$.
  But by definition of $\widehat{G}$,
  we have $\widehat{G}v = Gv$ 
  and $\GL(Gv) = \widehat{G}$.
  Thus $\GL(\widehat{G}w)\iso \GL(\widehat{G}X)$ is isomorphic to
  a subgroup of $\widehat{G}$.
  On the other hand, $\widehat{G}\leq \GL(\widehat{G}w)$.
  The result follows.
\end{proof}

We now digress to give an example
which shows that 
the conclusions of
Lemma~\ref{lm:inj_rep_gen_sym}
and Corollary~\ref{co:closure}
may fail to hold if the characteristic of $\kk $
divides the order of the stabilizer.

\begin{expl}\label{expl:char2_incr_gen}
  Let $\kk $ be a field of characteristic~$2$.
  Let $U \leq \kk^2$ be a finite additive subgroup
  such that 
  $(u,v)\in U$ implies
  $(0,u)\in U$. 
  This condition ensures that
  \[
    G := 
      \left\{ 
        \begin{pmatrix}
           1 & u & v \\
             & 1 & c \\
             &   & 1
        \end{pmatrix}
        \colon
        (u,v) \in U,
        c\in \GF{2}
      \right\}
  \]
  is a finite subgroup of $\GL(3,\kk )$.
  Moreover, we assume that
  \[ \{\lambda \in \kk 
       \colon
       \lambda U \subseteq U
     \}
     = \{0,1\} = \GF{2}
  \]
  and that
    $(\GF{2})^2 \subseteq U$.
  (For example, we can choose $U=(\GF{2})^2$.)
  A vector $(x,y,z)^t$ is a generator of 
  $\kk^3$ if and only if $z\neq 0$.
  
  Let $W = (X,Y,Z)^t \in \kk (X,Y,Z)^3$.
  It is easy to check that each element of
  \[ H = 
     \left\{
       \begin{pmatrix}
         1 & (uY +vZ)/Z & (uY+vZ)Y/Z^2 \\
           & 1     & 0 \\
           &       & 1
       \end{pmatrix}
       \colon (u,v)\in U
     \right\}
  \]
  maps the orbit $GW$ onto itself, and fixes $W$.
  For example, for $(u,v) = (1,1)$, we get the matrix
    \[ A(X,Y,Z) = 
       \begin{pmatrix}
         1 & Y/Z + 1 & (Y/Z +1)Y/Z \\
           & 1       & 0 \\
           &         & 1
       \end{pmatrix} \in \GL(GW)\,.
    \]
    On the other hand, we have $A(1,1,1) = I$, 
    and so evaluation is not injective in this case.
   Lemma~\ref{lm:inj_rep_gen_sym} does not apply here
   since $2$ (the characteristic of $\kk $)
   divides the order of the stabilizer of $(1,1,1)^t$ 
   in $G$.
  
  It is somewhat tedious, but elementary, to compute 
  that $H$ is in fact exactly the set of matrices that fix 
  the generic vector $W$, and map
  its orbit $GW$ onto itself.
  (Here we need that $\lambda U \subseteq U$ implies
    $\lambda \in \{0,1 \}$.)  
  Since $G$ acts regularly on $GW$, it follows
  that $\GL(GW) = HG > G$.
  
  Now suppose $w = (x,y,z)^t \in \kk^3$ is a generic vector.
  (Recall that generic vectors exist when $\kk $ is large enough,
   which we simply assume now.)
  It follows that $\widehat{G}:= \GL(Gw)$
  has also the form
  \[
    \widehat{G}
    = 
       \left\{ 
         \begin{pmatrix}
            1 & u & v \\
              & 1 & c \\
              &   & 1
         \end{pmatrix}
         \colon
         (u,v) \in \widehat{U},
         c\in \GF{2}
      \right\}\,,
  \]
  with a finite subgroup $\widehat{U}\leq \kk^2$
  such that $U < \widehat{U}$.
  If $\widehat{U}$ also fulfills the assumption
  that $\lambda \widehat{U}\subseteq \widehat{U}$ 
  implies $\lambda \in \GF{2}$,
  then we can continue as before.
  For example, when $\kk =\GF{2}(t)$
  (the function field in one variable),
  this will be true automatically
  (as every $\lambda \in \GF{2}(t) \setminus \GF{2}$
   has infinite order, but
   $\widehat{U}$ is finite).
  Thus we can start with $U=(\GF{2})^2$,
  and we get an infinitely increasing chain of 
  generic symmetry groups.
\end{expl}

By Lemma~\ref{lm:gen_rep}, 
any generating point 
$v\in \Gen(V)$ 
defines a representation 
\[D_v \colon \Sym(G,v) \to \GL(V) \,.
\] 
We now consider the restrictions
to the generic symmetry group,
$\Sym(G,V)$.

\begin{lemma} \label{lm:gen_rep_eq}
  The character of the restriction
  $D_v\colon \Sym(G,V) \to \GL(V)$ is independent
  of $v\in \Gen(V)$.
\end{lemma}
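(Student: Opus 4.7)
The plan is to fix a generic symmetry $\pi \in \Sym(G,V)$ and show that the character value $\operatorname{tr}(D_v(\pi))$ is independent of the choice of $v \in \Gen(V)$; doing so for every $\pi$ will yield the lemma. The central input is Lemma~\ref{lm:eval_gen}, which exhibits a single matrix $D_X(\pi) \in \GL(d,\kk(X))$ that evaluates at every $v \in \Gen(V)$ to $D_v(\pi)$. Consequently, the function $v \mapsto \operatorname{tr}(D_v(\pi))$ on $\Gen(V)$ is the restriction of the single rational function $f_\pi(X) := \operatorname{tr}(D_X(\pi)) \in \kk(X)$, so the task reduces to showing that $f_\pi$ is constant on $\Gen(V)$.

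First I would note that $\pi$ has finite order, say $n$, because $\Sym(G,V)$ is contained in the finite group $\Sym(G)$. Since $D_v$ is a group homomorphism (Lemma~\ref{lm:gen_rep}), this forces $D_v(\pi)^n = I$ for every $v \in \Gen(V)$. By Cayley--Hamilton the eigenvalues of $D_v(\pi)$ in $\overline{\kk}$ are then $n$-th roots of unity, and hence $\operatorname{tr}(D_v(\pi))$ is a sum of $d$ many $n$-th roots of unity (with multiplicity). There are only finitely many possibilities for such a sum, and since the value $f_\pi(v)$ also lies in $\kk$, the image of $f_\pi$ on $\Gen(V)$ lies in a fixed finite subset $S \subseteq \kk$.

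The last step is to invoke irreducibility: $V = \kk^d$ is irreducible in the Zariski topology (since $\kk$ is infinite), and by Lemma~\ref{lm:gen_open} the non-empty open subset $\Gen(V)$ is irreducible as well. The decomposition
\[ \Gen(V) = \bigcup_{s \in S} f_\pi^{-1}(s)
\]
writes $\Gen(V)$ as a finite union of relatively closed subsets, so irreducibility forces $\Gen(V) = f_\pi^{-1}(s_0)$ for some $s_0 \in S$; that is, $f_\pi$ is constant on $\Gen(V)$. The only delicate ingredient is the finiteness of the value set $S$, which rests on $\pi$ having finite order and on there being only finitely many $n$-th roots of unity in $\overline{\kk}$ for fixed $n$; both hold uniformly in the characteristic of $\kk$, so I do not expect any serious obstacle beyond this.
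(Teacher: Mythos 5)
Your proof is correct, and it shares the first two stages with the paper's argument (pass to the single rational function $f_\pi(X)=\Tr(D_X(\pi))\in\kk(X)$ via Lemma~\ref{lm:eval_gen}, then note that each value is a sum of roots of unity because $\pi$ has finite order). But the concluding step is genuinely different. The paper argues at the level of the function field: since $\Tr(A(X))$ is a sum of roots of unity, the element $f(X)\in\kk(X)$ is algebraic over $\kk$, and because $\kk(X)/\kk$ is purely transcendental this forces $f(X)\in\kk$, i.e.\ $f$ is literally a constant rational function. You instead argue pointwise: the values $f_\pi(v)$ lie in a finite set $S\subseteq\kk$, so $\Gen(V)$ is a finite union of the relatively closed fibers $f_\pi^{-1}(s)$, and irreducibility of $\Gen(V)$ forces one fiber to be everything. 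Both arguments are sound; the paper's is a one-line field-theoretic punchline and actually shows more (that $f(X)$ is a constant \emph{as a rational function}, not merely constant on $\kk$-points), while yours substitutes elementary Zariski-topological reasoning and therefore fits more naturally with the irreducibility toolkit already set up in Section~\ref{sec:gen_points}. Two small cosmetic points: the fact that the eigenvalues of $D_v(\pi)$ are $n$-th roots of unity follows from $D_v(\pi)^n=I$ directly (eigenvectors), not from Cayley--Hamilton; and it would be worth a sentence to confirm that the fibers $f_\pi^{-1}(s)$ are indeed relatively closed in $\Gen(V)$, which holds because on each basic open set $O_f$ covering $\Gen(V)$ the function $f_\pi$ is a quotient of polynomials with denominator $f$, so $f_\pi^{-1}(s)\cap O_f$ is a polynomial vanishing locus.
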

\begin{proof}
Let $\pi \in \Sym(G,V)$ be a generic symmetry,
and let 
$A(X) = D_X(\pi) \in \GL(GX)$ be the matrix realizing 
$\pi$ as an orbit symmetry of 
the vector of indeterminates $X\in \kk (X)^d$. 
By Lemma~\ref{lm:eval_gen}, $A(X)$ evaluates to 
$A(v) = D_v(\pi)$ for any $v\in \Gen(V)$.
Thus the rational function
$ f(X)= \Tr(A(X)) \in \kk (X)$ evaluates to
$f(v) = \Tr(D_v(\pi))$. 
On the other hand,
$A(X)=D_X(\pi)$ has finite order 
and thus $\Tr(A(X))$ is a sum of roots of unity.
Thus $f(X)$ is algebraic over $\kk $. 
Since $\kk (X)/\kk $ is purely transcendental, 
we conclude that 
$f(X)\in \kk $,
which means that 
$f(v)=\Tr(D_v(\pi))$ is independent of $v$.
\end{proof}

As in our earlier paper \cite[Theorem 5.3]{FrieseLadisch16},
it follows that when $\kk $ has characteristic zero,
then the different $D_v$'s are similar
as representations of $\Sym(G,V)$.
This may be wrong in positive characteristic,
as Example~\ref{expl:char2_incr_gen} shows.

\begin{prp} \label{prp:abs_simp_gen_sym}
Let $D \colon G \to \GL(V)$ be an absolutely irreducible 
representation. 
Then 
\[ D_v(\pi) = D(\pi(1))
   \quad \text{for all $v\in \Gen(V)$ and $\pi\in \Sym(G,V)$}\,.
\]
In particular,
$\GL(Gv) = D(G)$ for every generic point $v\in V$.
\end{prp}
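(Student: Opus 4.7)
The plan is to identify $\alpha := D_X(\pi) \in \GL(d,\kk(X))$ explicitly as the constant matrix $D(\pi(1))$, then transfer the identity to each generic $v$ via Lemma~\ref{lm:eval_gen}. By Lemma~\ref{lm:gen_closure_char}, $\pi\in\Sym(G,X)$, so $\alpha$ satisfies $\alpha\, D(g)\, X = D(\pi(g))\, X$ for every $g\in G$. Extending $\pi$ to a $\kk$-linear map $\pi_*\colon \kk G\to \kk G$ on basis vectors, this becomes
\[
   \alpha\, D(u)\, X = D(\pi_* u)\, X \qquad\text{for all } u\in \kk G.
\]
By absolute irreducibility (Burnside's theorem), the $\kk$-linear span of $D(G)$ is all of $M_d(\kk)$, so $D\colon \kk G \to M_d(\kk)$ is surjective.

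The first step is to check that $\pi_*$ preserves $\Ker D$. If $D(u)=0$, the displayed equation yields $D(\pi_* u)\, X = 0$; since $D(\pi_* u)$ has entries in $\kk$ while $X_1,\dotsc,X_d$ are $\kk$-linearly independent in $\kk(X)$, we conclude $D(\pi_* u)=0$. Thus $\pi_*$ descends to a $\kk$-linear bijection $\tilde\pi_*\colon M_d(\kk)\to M_d(\kk)$ satisfying
\[
   \alpha\, M\, X = \tilde\pi_*(M)\, X \qquad\text{for every } M\in M_d(\kk).
\]
The key point is that we may now test the equation against \emph{every} matrix in $M_d(\kk)$, not merely the image of $G$.

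The crux is then a short calculation showing $\alpha\in M_d(\kk)$. Applying the above equation with $M$ the matrix unit whose unique nonzero entry is a $1$ in position $(i,j)$, one has $MX = X_j e_i$, so the equation becomes $X_j\cdot(\text{$i$th column of } \alpha) = \tilde\pi_*(M)\, X$, a vector whose entries are $\kk$-linear forms in $X_1,\dotsc,X_d$. Hence every entry $\alpha_{ki}\in\kk(X)$ has the property that $\alpha_{ki}X_j\in \kk X_1 + \dotsb + \kk X_d$ for every $j$. Writing $\alpha_{ki}=p/q$ in lowest terms, $q$ must divide each $X_j$, so $q$ divides $\gcd(X_1,\dotsc,X_d)=1$ and hence $q\in\kk$; a degree comparison then forces $\alpha_{ki}\in\kk$. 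With $\alpha\in M_d(\kk)$, the case $g=1$ of the defining equation reads $\alpha X = D(\pi(1))\, X$; comparing coefficients of the $X_i$ yields $\alpha = D(\pi(1))$. Lemma~\ref{lm:eval_gen} then gives $D_v(\pi)=D(\pi(1))$ for every $v\in\Gen(V)$. The final assertion follows: at a generic $v$ we have $\Sym(G,v)=\Sym(G,V)$, so $\GL(Gv)=\{D(\pi(1)):\pi\in\Sym(G,V)\}$, and varying $\pi$ over $\lambda(G)\subseteq\Sym(G,V)$ already exhausts $D(G)$.

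The main obstacle is the final step, promoting $\alpha$ from a matrix of rational functions to a constant matrix in $M_d(\kk)$. Absolute irreducibility is what makes this possible: it places the matrix units in the $\kk$-linear span of $D(G)$, which provides enough linear constraints on each entry $\alpha_{ki}$ to force it to be a scalar.
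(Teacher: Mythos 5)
Your proof is correct and takes a genuinely different route. The paper argues via characters: by Lemma~\ref{lm:gen_rep_eq}, the restriction of $D_v$ to $\Sym(G,V)$ has a $v$-independent character; since $D$ is absolutely irreducible, so are all the $D_v$'s, and being absolutely irreducible with equal characters they are equivalent; the intertwiner commutes with $D(G)$, so by Schur's lemma ($\End_{\kk G}(V)=\kk$) it is scalar, giving $D_v = D_w$ outright; a second genericity argument (choosing $w\in\Gen(V)$ which is simultaneously generic for the group $\Sym(G,V)$) then pins down $D_v(\pi)=D(\pi(1))$. You instead bypass both the character comparison and the second genericity argument: you exploit absolute irreducibility through the Burnside/double-centralizer consequence $D(\kk G)=M_d(\kk)$, which lets you test the functional equation $\alpha\, M\, X=\tilde\pi_*(M)\,X$ against all matrix units and conclude directly that $\alpha=D_X(\pi)$ is a \emph{constant} matrix, whence $\alpha=D(\pi(1))$ from the $g=1$ case, and Lemma~\ref{lm:eval_gen} transfers this to every $v\in\Gen(V)$. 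Both arguments turn on absolute irreducibility, but through two different classical consequences (Schur's lemma plus character-determines-representation, versus Burnside's density theorem); yours is more self-contained and arguably more elementary, needing no character theory and no second pass at genericity. One small formal gap: your $\gcd$ step ($q$ divides $\gcd(X_1,\dots,X_d)=1$) silently assumes $d\geq 2$; for $d=1$ the conclusion is immediate anyway, since $\alpha_{11}X_1$ being a homogeneous linear form forces $\alpha_{11}\in\kk$, or one can just read off $\alpha=D(\pi(1))$ from the $g=1$ case directly, but it is worth noting that the argument as written needs at least two variables to pinch the denominator.
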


This result generalizes another result from our previous
paper~\cite[Theorem 5.5]{FrieseLadisch16} 
to arbitrary (infinite) fields.
In fact, an old paper of Isaacs already contains the conclusion
that $\GL(Gv) = D(G)$ 
for \emph{some} point $v\in V$~\cite{Isaacs77}.
Our formulation here is chosen with a view to later
applications.

\begin{proof}[Proof of Proposition~\ref{prp:abs_simp_gen_sym}]
Let $v,w \in \Gen(V)$ be arbitrary. 
By Lemma~\ref{lm:gen_rep_eq},
the representations 
\[ D_v, D_w \colon \Sym(G,V) \to \GL(V)
\]
have the same character. 
The representations $D_v$ and $D_w$ are absolutely irreducible,
as $D$ is absolutely irreducible.
Since the character determines an irreducible representation 
up to equivalence~\cite[Corollary~9.22]{isaCTdov}, 
$D_v$ and $D_w$ are equivalent.
Thus there is a linear map
$\varphi \in \GL(V)$ with 
$\varphi \circ D_v(\pi) = D_w(\pi) \circ \varphi$ 
for all 
$\pi \in \Sym(G,V)$. 
In particular, 
$\varphi \circ D_v(\lambda_g) = D_w(\lambda_g) \circ \varphi$ 
for all 
$g \in G$, where $\lambda_g$ is the left multiplication by $g$.
Since $D_v(\lambda_g) = D(g) =  D_w(\lambda_g)$, 
this shows that
$\varphi \in \End_{\kk G}(V) = \kk \cdot \mathrm{id}_V$. 
Hence $D_v = D_w$.

It follows that the action of $\Sym(G,V)$ 
on $V$ via $D_v$
is in fact independent of
$v \in \Gen(V)$.
Thus we can pick some $w\in \Gen(V)$ which is 
generic for $\Sym(G,V)$.
Let $\pi\in \Sym(G,V)$ and set $g=\pi(1)$.
Then $D_w(\lambda_g^{-1}\pi)w = w$,
and since $w$ is generic for $\Sym(G,V)$,
we have $\lambda_g^{-1} \pi\in \Ker D_w$.
Hence
$D_v(\pi)=D_w(\pi)=D_w(\lambda_g) = D(g)$, which is the first claim.

In particular, for $v$ generic for $G$, we have
$\GL(Gv)= D_v(\Sym(G,V))= D(G)$.
\end{proof}

\section{Generic symmetries and left ideals}
\label{sec:leftideals}
In the following, we will characterize generic symmetries in terms of
left ideals of the group algebra $\kk G$. 
For a left $\kk G$-module $V$ and $v\in V$,
we set
\[ \Ann(v) := \Ann_{\kk G}(v)
           := \{ a\in \kk G \colon av = 0
              \}\,,
\]
the annihilator of $v$ in $\kk G$.
This is a left ideal of $\kk G$.

Note that $G$ is a basis of $\kk G$, 
and so any permutation $\pi \in \Sym(G)$ uniquely
extends to an automorphism of the $\kk $-vector space $\kk G$, 
which we will also denote by $\pi$.

\begin{lemma} \label{lm:sym_by_ann}
Let $v \in \Gen(V)$ and $\pi \in \Sym(G)$. 
Then $\pi$ is an orbit
symmetry for $v$ if and only if 
$\pi(\Ann(v)) \subseteq \Ann(v)$.
\end{lemma}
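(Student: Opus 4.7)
The plan is to exploit the surjection $\phi\colon \kk G \to V$, $a\mapsto av$, whose kernel is exactly $\Ann(v)$ (because $v\in\Gen(V)$ and $V$ is finite-dimensional as $G$ is finite), together with the linear extension of $\pi\in\Sym(G)$ to an automorphism of $\kk G$ (using that $G$ is a $\kk$-basis). Under $\phi$, the vectors $gv$ correspond to cosets $g+\Ann(v)$. An orbit symmetry is then essentially a lift of a map on the quotient $\kk G/\Ann(v)\cong V$, which explains why invariance of $\Ann(v)$ under $\pi$ is the natural condition.

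For the ``only if'' direction, I would assume $A\in\GL(V)$ satisfies $A(gv)=\pi(g)v$ for all $g\in G$, and take an arbitrary $a=\sum_{g}c_g g\in\Ann(v)$. Then
\[
\pi(a)\,v=\sum_{g}c_g\,\pi(g)v=\sum_{g}c_g\,A(gv)=A\Bigl(\sum_{g}c_g\,gv\Bigr)=A(av)=0,
\]
so $\pi(a)\in\Ann(v)$. This is a one-line computation.

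For the ``if'' direction, I would start by upgrading $\pi(\Ann(v))\subseteq\Ann(v)$ to equality, which is essentially free: $\pi$ has finite order in $\Sym(G)$, so $\pi^{-1}$ is a power of $\pi$ and therefore also leaves $\Ann(v)$ invariant; alternatively, $\pi$ is $\kk$-linear and injective on the finite-dimensional space $\Ann(v)$. Hence $\pi$ descends to a $\kk$-linear automorphism $\bar\pi$ of $\kk G/\Ann(v)$. Transporting along the $\kk$-isomorphism $\phi$ yields an $A\in\GL(V)$ with $A(\phi(g))=\phi(\pi(g))$, i.e.\ $A(gv)=\pi(g)v$, so $\pi\in\Sym(G,v)$.

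There is no real obstacle here; the only small subtlety is the invertibility of the induced map $A$, which is why one needs $\pi(\Ann(v))=\Ann(v)$ rather than just containment. Both the finite-order argument and the dimension argument handle this in one line, so I would just state one of them explicitly.
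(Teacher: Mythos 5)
Your proof is correct and takes essentially the same route as the paper: both use the surjection $\kappa_v\colon \kk G\to V$, $a\mapsto av$, with kernel $\Ann(v)$, and observe that an orbit symmetry for $v$ is precisely a linear map that factors $\kappa_v\circ\pi$ through $\kappa_v$. Your explicit discussion of why the induced map is automatically invertible (finite order of $\pi$, or a dimension count) addresses a point the paper leaves implicit; one could equally note that the induced map on $\kk G/\Ann(v)$ is surjective because $\pi$ is, hence bijective in finite dimension, so bare containment already suffices.
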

\begin{proof}
Let $\kappa_v \colon \kk G\to V$ be the map defined by
$\kappa_v(a) = av$.
This is a homomorphism of left $\kk G$-modules with kernel
$\Ann(v)$.

By definition, $\pi$ is an orbit symmetry for $v$, 
if and only if there is a linear map
$\alpha \colon V \to V$, 
such that $\alpha(gv) = \pi(g)v$ for all $g\in G$.
This means that $\alpha$ makes the following diagram commute:
\[  \begin{tikzcd}
       \kk G \rar{\pi} 
            \dar{\kappa_v} 
         & \kk G \dar{\kappa_v}
         \\
        V \rar[dashed]{\alpha}
         & V
    \end{tikzcd}
\]
Since $\kappa_v$ is surjective (because $V=\kk Gv$),
such an $\alpha$ exists if and only if
$\Ann(v) = \Ker \kappa_v \subseteq \Ker(\kappa_v \circ \pi)$.
The last equality is equivalent to
$\pi(\Ann(v)) \subseteq \Ann(v)$, as $\pi$ is invertible.
\end{proof}

\begin{lemma} \label{lm:gen_sym_by_ann}
Let $\pi \in \Sym(G)$ and $v \in \Gen(V)$, 
and set $L := \Ann(v)$.
Then the following are equivalent:
\begin{enumthm}
\item \label{it:gen_sym} 
      $\pi \in \Sym(G,V)$,
\item \label{it:ann_unit}
      $\pi(Ls) \subseteq Ls$ for all units $s\in (\kk G)^{\times}$,
\item \label{it:ann_iso}
      $\pi(\widetilde{L}) \subseteq \widetilde{L}$ 
      for every left ideal $\widetilde{L}$
      that is isomorphic to $L$ (as left $\kk G$-module).
\end{enumthm} 
\end{lemma}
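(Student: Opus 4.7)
The plan is to establish the equivalences via (i) $\Leftrightarrow$ (ii) and (ii) $\Leftrightarrow$ (iii), using the preceding Lemma~\ref{lm:sym_by_ann} to convert ``being an orbit symmetry'' into ``preserving an annihilator ideal''.

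For (i) $\Leftrightarrow$ (ii): for every unit $s \in (\kk G)^{\times}$, the vector $sv$ is a generator of $V$, and a direct computation yields $\Ann(sv) = Ls^{-1}$. By Lemma~\ref{lm:sym_by_ann}, (ii) therefore says exactly that $\pi$ is an orbit symmetry for $sv$ for every unit $s$, so (i) $\Rightarrow$ (ii) is immediate. For the converse, Lemma~\ref{lm:sym_points_closed} ensures that $\{w \in \Gen(V) \colon \pi \in \Sym(G,w)\}$ is Zariski-closed in $\Gen(V)$, so it suffices to show that $\{sv \colon s \in (\kk G)^{\times}\}$ is Zariski-dense in $\Gen(V)$. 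This set is the image of the non-empty open set of units under the linear surjection $\kk G \to V$, $a \mapsto av$. Linear surjections between finite-dimensional $\kk$-spaces are open maps (in suitable bases they are coordinate projections), so the image is a non-empty open, hence dense, subset of the irreducible variety $V$.

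For (iii) $\Rightarrow$ (ii) there is nothing to prove, since right multiplication by a unit $s$ is a $\kk G$-module isomorphism $L \to Ls$. The main work is (ii) $\Rightarrow$ (iii). I would appeal to the self-injectivity (Frobenius property) of the group algebra $\kk G$: given $\widetilde{L} \cong L$, any $\kk G$-isomorphism $\phi \colon L \to \widetilde{L} \hookrightarrow \kk G$ extends to a $\kk G$-endomorphism $\Phi \colon \kk G \to \kk G$, which must have the form $\Phi(a) = as$ for some $s \in \kk G$. Thus $\widetilde{L} = Ls$ with right multiplication by $s$ injective on $L$. Let $S \subseteq \kk G$ be the Zariski-open subvariety consisting of those $s$ with this injectivity property (a non-vanishing condition on appropriate maximal minors). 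Every left ideal isomorphic to $L$ arises as $Ls$ for some $s \in S$, and the units form a non-empty open subset of the irreducible variety $\kk G$ contained in $S$, hence dense in $S$. Finally, the condition ``$\pi(Ls) \subseteq Ls$'' is Zariski-closed on $S$: fixing a $\kk$-basis $l_1, \dotsc, l_r$ of $L$, the vectors $l_1 s, \dotsc, l_r s$ form a basis of $Ls$ for $s \in S$, and the condition amounts to the vanishing of all $(r+1) \times (r+1)$ minors of the matrix with columns $\pi(l_1 s), \dotsc, \pi(l_r s), l_1 s, \dotsc, l_r s$, whose entries are polynomials in the coordinates of $s$. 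Density of units in $S$ combined with closedness lifts (ii) from units to all of $S$, yielding (iii).

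The main obstacle lies in (ii) $\Rightarrow$ (iii): the parametrisation $s \mapsto Ls$ is far from injective, the ``good'' locus $S$ is a proper open subvariety of $\kk G$, and both the density of units and the closedness of the invariance condition must be set up inside this same subvariety $S$. The Frobenius property of $\kk G$ is what lets every isomorphic left ideal be reached by right multiplication, and the Zariski-density-plus-closedness argument does the rest.
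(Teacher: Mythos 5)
Your proof is correct, but it takes a genuinely different route from the paper's. For (ii) $\Rightarrow$ (i), the paper invokes a theorem of Bass (cited as \cite[20.9]{Lam01}) to conclude that \emph{every} $w \in \Gen(V)$ equals $sv$ for a unit $s$, whence $\Ann(w) = Ls$ and Lemma~\ref{lm:sym_by_ann} finishes; you instead show only that the set $\{sv \colon s \in (\kk G)^\times\}$ is Zariski-dense in $\Gen(V)$ and combine this with the relative closedness from Lemma~\ref{lm:sym_points_closed}. For the implication toward (iii), the paper argues (i) $\Rightarrow$ (iii) directly via the Frobenius-ring fact that $L \cong \widetilde{L}$ forces $\kk G/L \cong \kk G/\widetilde{L}$ (\cite[Theorem~15.21]{Lam99}), observes that $\widetilde{L} = \Ann(1+\widetilde{L})$, and applies Lemma~\ref{lm:sym_by_ann} again. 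You instead use self-injectivity of $\kk G$ only to write $\widetilde{L} = Ls$ for some not-necessarily-invertible $s$, and then run a second density-plus-closedness argument on the parameter variety $S$. Both routes are sound. The paper's is shorter and does not depend on $\kk$ being infinite at these particular steps; yours avoids Bass's theorem and the quotient-isomorphism result for Frobenius rings (you still need self-injectivity, which is itself a Frobenius-ring fact), but at the cost of two Zariski-density arguments, which are correct here only because $\kk$ is infinite and $\kk G$ and $V$ are irreducible varieties. One small remark: the paper notes in passing that in a Frobenius ring every left ideal isomorphic to $L$ is actually of the form $Ls$ with $s$ a \emph{unit} (\cite[Proposition~15.20]{Lam99}); had you used that, (ii) $\Leftrightarrow$ (iii) would have been immediate and the second density argument could have been skipped entirely.
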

\begin{proof}
  We begin with ``\ref{it:gen_sym}~$\implies$~\ref{it:ann_iso}''.
  Let $\pi \in \Sym(G,V)$ and 
  assume that $L\iso \widetilde{L}$ as left $\kk G$-modules.
  We claim that also $\kk G/L \iso \kk G/\widetilde{L}$
  (as left $\kk G$-modules).
  This is clear if $\kk G$ is semisimple
  (which is the only case where we will apply this lemma),
  but is also true for 
  Frobenius rings~\cite[Theorem~15.21]{Lam99},
  and $\kk G$ is 
  a Frobenius ring~\cite[Example~3.15E]{Lam99}.
  Thus $V = \kk Gv \iso \kk G/L \iso \kk G/\widetilde{L}$,
  and $\Sym(G,V) = \Sym(G, \kk G/\widetilde{L})$.
  As $\widetilde{L}$ is the annihilator of 
  $1 +\widetilde{L}$ in $\kk G$,
  Lemma~\ref{lm:sym_by_ann} yields that
  $\pi(\widetilde{L})\subseteq \widetilde{L}$.
     
  That \ref{it:ann_iso} implies~\ref{it:ann_unit}
  is clear since $Ls \iso L$.
  
  Now assume \ref{it:ann_unit},
  and let $w\in \Gen(V)$ be another generator.  
  By a theorem of Bass~\cite[20.9]{Lam01},
  it follows that
  $v = s w$ for a \emph{unit} $s\in (\kk G)^{\times}$. 
  Thus $\Ann(w) = \Ann(v)s = Ls$.
  Then Lemma~\ref{lm:sym_by_ann} yields that $\pi \in \Sym(G,w)$,
  and thus $\pi\in \Sym(G,V)$.
\end{proof}

Let us mention in passing
that in a Frobenius ring, every left
ideal isomorphic to $L$ is of the form $Ls$
with some unit $s$ \cite[Proposition~15.20]{Lam99}.

\begin{co}
  Suppose that $\Ann(v)$ is a (twosided) ideal of\/ $\kk G$,
  where $v\in \Gen(V)$.
  Then $\Sym(G,w) = \Sym(G,v)$ for all $w\in \Gen(V)$,
  and in fact all $w\in \Gen(V)$ are generic.
\end{co}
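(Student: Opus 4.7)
The plan is to exploit Lemmas~\ref{lm:sym_by_ann} and~\ref{lm:gen_sym_by_ann} using the two-sided hypothesis on $L := \Ann(v)$. The key observation is that for any unit $s\in (\kk G)^{\times}$, both $Ls\subseteq L$ and $L = Ls\cdot s^{-1}\subseteq Ls$ hold, so $Ls = L$. Consequently, condition~\ref{it:ann_unit} in Lemma~\ref{lm:gen_sym_by_ann} applied to $v$ collapses to $\pi(L)\subseteq L$, which by Lemma~\ref{lm:sym_by_ann} is exactly the criterion for $\pi\in \Sym(G,v)$. Combined with the trivial inclusion $\Sym(G,V)\subseteq \Sym(G,v)$, this already yields $\Sym(G,V)=\Sym(G,v)$.

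Next I would show $\Ann(w) = L$ for every generator $w\in \Gen(V)$. By the invocation of Bass's theorem used in the proof of Lemma~\ref{lm:gen_sym_by_ann}, every such $w$ satisfies $v = sw$ for some unit $s\in (\kk G)^{\times}$, so $\Ann(w) = Ls = L$ by the two-sided hypothesis. Applying Lemma~\ref{lm:sym_by_ann} to $w$ then gives $\Sym(G,w) = \{\pi\in \Sym(G) : \pi(L)\subseteq L\} = \Sym(G,v) = \Sym(G,V)$, which is the first claim.

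For the genericity claim, the remaining requirement in Definition~\ref{df:generic} is $G_w = \Ker(V)$. Since $G_w = \{g\in G : g-1\in \Ann(w)\}$, the equality $\Ann(w) = \Ann(v)$ already forces $G_w = G_v$ for every $w\in \Gen(V)$. Theorem~\ref{thm:existence_generic} supplies at least one genuinely generic point $w_0\in \Gen(V)$, for which $G_{w_0} = \Ker(V)$; combining with $G_{w_0} = G_v$ yields $G_v = \Ker(V)$, so $G_w = \Ker(V)$ for all $w\in \Gen(V)$. There is no real obstacle here: the entire argument rests on the single observation $Ls = L$ for units~$s$, which is precisely what the two-sided hypothesis delivers, and the rest is careful bookkeeping with Lemmas~\ref{lm:sym_by_ann} and~\ref{lm:gen_sym_by_ann}.
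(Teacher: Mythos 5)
Your proof is correct and follows essentially the same route the paper indicates (the paper's own proof is terse, citing Lemmas~\ref{lm:gen_sym_by_ann} and~\ref{lm:sym_by_ann} and the observation $\Ann(w)=\Ann(v)s=\Ann(v)$, which you spell out carefully via Bass's theorem and the key fact $Ls=L$ for a twosided $L$ and unit $s$). Your detour through Theorem~\ref{thm:existence_generic} to obtain $G_v=\Ker(V)$ is a harmless and legitimate way to close the last gap; one could alternatively note that a twosided $\Ann(v)$ forces $G_v$ to be normal and hence equal to $\Ker(V)$ directly, but both arguments are fine.
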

\begin{proof}
  The first assertion is immediate from 
  Lemma~\ref{lm:gen_sym_by_ann} and Lemma~\ref{lm:sym_by_ann}.
  The stabilizer in $G$ of a point $v$ 
  is the set of $g\in G$ such that $g-1 \in \Ann(v)$,
  and $\Ann(w) = \Ann(v)s=\Ann(v)$ for all $w\in \Gen(V)$.
  Thus all $w\in \Gen(V)$ are generic.
\end{proof}

Although very simple, 
Lemma~\ref{lm:gen_sym_by_ann} has quite remarkable consequences.
For example, when $\pi$ is a generic symmetry for 
the cyclic modules $\kk G/ L_1$ and $\kk G/L_2$,
where $L_1$ and $L_2$ are left ideals,
then it is immediate from the characterization
in Lemma~\ref{lm:gen_sym_by_ann}
that $\pi$ is also generic for the modules
$\kk G/(L_1\cap L_2)$ and $\kk G/(L_1 + L_2)$.

Also, when $\pi $ is generic for $\kk G/L$, and $I$
is any left ideal which we get by 
repeatedly taking intersections
and sums of left ideals isomorphic to $L$,
then $\pi$ is generic for $\kk G /I$.
For example, we can take for $I$ the sum of all left 
ideals isomorphic to $L$.
This will be used below in the case where
$\kk $ has characteristic zero.

\section{Character Criteria}
\label{sec:characters}

In this section, we work over the field $\compl$ of complex numbers.
The aim of this section is to describe 
the generic symmetries of some (cyclic) $\compl G$-module
$V$ in terms of its character, $\chi$,
and in particular, 
its decomposition into irreducible characters.

We emphasize that instead of $\compl$, any field
$\kk $ of characteristic zero would do, for the following reasons:
Suppose that $V$ is a cyclic $\kk G$-module, where
$\kk $ has characteristic zero.
By Proposition~\ref{prp:scalar_ext}, we have
$\Sym(G,V) = \Sym(G, V \otimes_{\kk } \overline{\kk })$,
where $\overline{\kk }$ is 
the algebraic closure of $\kk $.
But over an algebraically closed field, any representation
is similar to a representation with entries
in $\overline{\QQ}$, the algebraic closure of 
the rational numbers $\QQ$
(which embeds into $\overline{\kk }$).
This means that there is a module $V_0$ over
$\overline{\QQ} G$ such that
$V\otimes_{\kk } \overline{\kk }
 \iso V_0 \otimes_{ \overline{\QQ} } \overline{\kk }$.
By Lemma~\ref{lm:iso_inv} and Proposition~\ref{prp:scalar_ext},
we have 
\[ \Sym(G,V) 
     = \Sym( G, V \otimes_{\kk } \overline{\kk } )
     = \Sym( G, V_0 \otimes_{\overline{\QQ}} \overline{\kk } )
     = \Sym( G, V_0 )\,.
\]
Thus we can assume without loss of generality that
$\kk = \overline{{\QQ}}$ or, 
as is more conventional, that $\kk =\compl$.

Since we are in characteristic zero,
any $\compl G$-module $V$ is determined up to
isomorphism by its character 
$\chi\colon G\to \compl$,
$\chi(g) = \Tr_V(g)$.
This suggests the first part of the following definition:

\begin{defn}
Let $\chi$ be a character
which is afforded by the cyclic $\compl G$-module~$V$. 
Then we set $\Sym(G,\chi) := \Sym(G,V)$.
The character of the representation 
\[ D_v \colon \Sym(G,\chi) \to \GL(V) 
\]
from Lemma~\ref{lm:gen_rep},
where $v$ is any generator of $V$, is denoted by $\widehat{\chi}$.
\end{defn}
Note that $\widehat{\chi}$ does not depend
on the choice of $v\in V$, by Lemma~\ref{lm:gen_rep_eq}.
By Lemma~\ref{lm:iso_inv},
$\Sym(G,\chi)$ and $\widehat{\chi}$ do not depend on
the choice of the module $V$ itself.
More generally, if $\chi$ is afforded by some module 
$\widetilde{V}$ over $\kk G$ for some other field $\kk $,
then $\Sym(G,\chi)$ and $\widehat{\chi}$ can also 
be defined with respect to $\widetilde{V}$,
by the remarks above.

Also note that
via the left regular action
$\lambda\colon G \to \Sym(G,V)$, we can view 
$\widehat{\chi}$ as an extension of $\chi$.

We call $\Sym(G,\chi)$ the generic symmetry group of $\chi$.
Likewise, we say that $\pi \in \Sym(G)$ is a generic symmetry 
for $\chi$ or that $G$ is
generically closed with respect to $\chi$, if $\pi$ 
is a generic symmetry
for $V$ or if $G$ is generically closed with respect to $V$, 
respectively.

As usual, the set of irreducible complex characters of $G$ 
is denoted by $\Irr(G)$.
We write $\rho_G$  
for the \emph{regular character} of $G$,
that is, the character of $\compl G$ as (left) 
module over itself.

Note that an arbitrary $\compl G$-module $V$ is cyclic 
if and only if $V$ is isomorphic to a left ideal of 
$\compl G$, 
because any epimorphism 
$\compl G \to V$ splits.
Thus a character $\chi$ is afforded by a cyclic 
$\compl G$-module if and only if 
$\chi$ is a constituent of $\rho_G$
(i.~e., $\rho_G-\chi$ is a character as well).
As $\rho_G = \sum_\psi \psi(1) \psi$, 
where $\psi$ runs over all irreducible characters of $G$,
an arbitrary character $\chi$ is afforded by a left ideal 
if and only if 
$\langle \chi, \psi \rangle \leq \psi(1)$
for all $\psi \in \Irr(G)$, 
where $\langle \; , \: \rangle$ denotes the usual 
inner product for class functions, i.~e. 
\[
\langle \alpha, \beta \rangle 
  = \frac{1}{|G|} \sum_{g \in G}
             \alpha(g) \overline{\beta(g)}\,.
\]
Unless otherwise stated, in the following every character 
is assumed to be afforded by a cyclic $\compl G$-module
(equivalently, a left ideal of $\compl G$).

We begin with the characterization of 
$\Sym(G,\chi)$ and $\widehat{\chi}$ for irreducible characters $\chi$, 
which is basically a reformulation of
Proposition~\ref{prp:abs_simp_gen_sym}.

\begin{co} \label{co:gen_sym_irred}
Let $\chi \in \Irr(G)$ and set $K:= \Ker(\chi)$. 
Then
\[
   \Sym(G,\chi) 
   = \{ \pi \in \Sym(G) \colon 
         \pi(gK) = \pi(1) g K
           \text{ for all } g \in G 
     \}\,.
\]
Furthermore, 
$\widehat{\chi}(\pi) = \chi(\pi(1))$ for all $\pi \in \Sym(G,\chi)$.
\end{co}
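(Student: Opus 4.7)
The plan is to deduce this directly from Proposition~\ref{prp:abs_simp_gen_sym}. Since we are working over $\compl$, an algebraically closed field, the irreducibility of $\chi$ implies that the $\compl G$-module $V$ affording $\chi$ is absolutely irreducible, so the proposition applies: for every $\pi\in\Sym(G,\chi)=\Sym(G,V)$ and every $v\in\Gen(V)$, we have $D_v(\pi)=D(\pi(1))$, where $D\colon G\to\GL(V)$ is the given representation.

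For the forward inclusion, fix a generic $v\in V$; by definition of generic (Definition~\ref{df:generic}) we have $G_v=\Ker(V)=\Ker(\chi)=K$. Unfolding the defining identity $D_v(\pi)(gv)=\pi(g)v$ together with $D_v(\pi)=D(\pi(1))$ gives $\pi(1)gv=\pi(g)v$, hence $(\pi(1)g)^{-1}\pi(g)\in G_v=K$, i.e.\ $\pi(g)K=\pi(1)gK$. Applied to each $gk$ with $k\in K$, this yields $\pi(gk)\in\pi(1)(gk)K=\pi(1)gK$, so $\pi(gK)\subseteq\pi(1)gK$, and a cardinality argument (both sides have size $|K|$ since $\pi$ is a permutation) upgrades this to the equality $\pi(gK)=\pi(1)gK$.

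For the converse, suppose $\pi\in\Sym(G)$ satisfies $\pi(gK)=\pi(1)gK$ for all $g\in G$. Then in particular $\pi(g)\in\pi(1)gK$, so $\pi(g)=\pi(1)gk_g$ for some $k_g\in K$. Since $K=\Ker(\chi)$ acts trivially on $V$, we get, for \emph{every} $v\in V$,
\[
  \pi(g)v=\pi(1)gk_g v=\pi(1)gv=D(\pi(1))\,gv.
\]
Thus the fixed linear map $D(\pi(1))\in\GL(V)$ realises $\pi$ as an orbit symmetry of every $v\in V$, and in particular of every $v\in\Gen(V)$, so $\pi\in\Sym(G,V)=\Sym(G,\chi)$.

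Finally, the character computation is immediate: for any $\pi\in\Sym(G,\chi)$ and any generator $v$,
\[
  \widehat{\chi}(\pi)=\Tr\bigl(D_v(\pi)\bigr)=\Tr\bigl(D(\pi(1))\bigr)=\chi(\pi(1)).
\]
I do not anticipate any real obstacle; the one point requiring a little care is the passage from the pointwise equality $\pi(g)K=\pi(1)gK$ to the setwise equality $\pi(gK)=\pi(1)gK$, but this is handled cleanly by combining $k\in K\Rightarrow\pi(gk)K=\pi(1)gK$ with a cardinality argument.
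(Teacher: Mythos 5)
Your proof is correct and follows essentially the same route as the paper: the key input in both is Proposition~\ref{prp:abs_simp_gen_sym}, giving $D_v(\pi)=D(\pi(1))$. The only cosmetic difference is that the paper packages the coset computation by invoking Lemma~\ref{lm:rep_kernel} (the kernel of $D_v$ consists of the permutations fixing each left coset of $G_v$ setwise), whereas you unfold it by hand with the cardinality argument; both are fine.
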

\begin{proof}
  Let $D\colon G\to \GL(V)$ be a representation affording $\chi$
  and suppose that $v\in V$ is generic, so that
  $K = \{g\in G \colon gv= v\}$.
  It follows easily from Lemma~\ref{lm:rep_kernel}
  that 
  \[D_v^{-1}(D(G)) 
    = \{ \pi \in \Sym(G)
         \colon 
         \pi(gK) = \pi(1)gK
         \text{ for all } g\in G
      \}\,,
  \]
  without any further assumption on $V$.
  But by Proposition~\ref{prp:abs_simp_gen_sym},
  we have $\Sym(G,\chi) = D_v^{-1}(D(G))$.
  Moreover, for $\pi\in \Sym(G,v)$, we have
  $D_v(\pi) = D(\pi(1))$ and thus
  $\widehat{\chi}(\pi) = \chi(\pi(1))$.
\end{proof}

In the proof of the next result,
we use the (unique) hermitian inner product $[\;,\:]$
on $\compl G$ such that $G$ is an orthonormal basis
with respect to $[\;,\:]$.
If $\pi \in \Sym(G)$, then $\pi$ extends uniquely to 
a linear automorphism of $\compl G$, also denoted by $\pi$,
which is clearly unitary with respect to this inner
product. 
In particular, left and right multiplications by elements
of $G$ are unitary.

For a subspace $L \leq \compl G$, 
we denote its orthogonal complement by
\[
   L^\perp  = 
     \{ x \in \compl G : [x,y] = 0 \text{ for all } y \in L 
     \}\,.
\]
It is easy to check that $L^\perp$ is a (left) ideal when $L$ is. 
Furthermore, for any $\pi \in \Sym(G)$ we have 
$\pi(L) \subseteq L$ if and only if 
$\pi(L^\perp) \subseteq L^\perp$. 

\begin{prp} \label{prp:gen_sym_leftideal}
A permutation $\pi \in \Sym(G)$ is a generic symmetry of 
the character $\chi$ 
if and only if 
one of the following equivalent conditions is satisfied:
\begin{enumthm}
\item \label{it:gen_sym_leftideal-1} 
      $\pi(L) \subseteq L$ for all left ideals $L$ affording $\chi$.
\item \label{it:gen_sym_leftideal-2} 
      $\pi(L) \subseteq L$ for all left ideals $L$ affording 
      $\rho_G - \chi$ (where $\rho_G$ is the regular character of $G$,
      as before).
\end{enumthm}
In particular, $\Sym(G,\chi) = \Sym(G,\rho_G - \chi)$.
\end{prp}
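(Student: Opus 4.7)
The plan is to combine Lemma~\ref{lm:gen_sym_by_ann}\ref{it:ann_iso} with the semisimple structure of $\compl G$ and the orthogonal complement construction introduced in the paragraph preceding the proposition.

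First I would fix a $\compl G$-module $V$ affording $\chi$ together with a generator $v \in \Gen(V)$, and set $L_0 := \Ann(v)$. Since $\compl G$ is semisimple, $\compl G \iso L_0 \oplus (\compl G/L_0)$ as $\compl G$-modules, and the quotient $\compl G/L_0 \iso V$ affords $\chi$, so $L_0$ itself affords $\rho_G - \chi$. In a semisimple algebra, two left ideals are isomorphic as modules if and only if they afford the same character. Hence Lemma~\ref{lm:gen_sym_by_ann}\ref{it:ann_iso} says that $\pi \in \Sym(G,V) = \Sym(G,\chi)$ if and only if $\pi(\widetilde L) \subseteq \widetilde L$ for every left ideal $\widetilde L$ affording $\rho_G - \chi$. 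This already gives the equivalence of being a generic symmetry with condition~\ref{it:gen_sym_leftideal-2}.

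Next I would prove \ref{it:gen_sym_leftideal-1}$\Leftrightarrow$\ref{it:gen_sym_leftideal-2} via the orthogonal complement. The text preceding the proposition records that $L^\perp$ is again a left ideal when $L$ is, and that $\pi(L)\subseteq L$ is equivalent to $\pi(L^\perp)\subseteq L^\perp$, because $\pi$ acts unitarily on $\compl G$ (it permutes the orthonormal basis $G$). Since $L$ and $L^\perp$ are both left ideals and $\compl G = L \oplus L^\perp$ as an internal orthogonal direct sum of $\compl G$-modules, the character of $L^\perp$ is $\rho_G$ minus the character of $L$. Therefore $L \mapsto L^\perp$ is an involutive bijection from left ideals affording $\chi$ onto left ideals affording $\rho_G - \chi$, converting condition~\ref{it:gen_sym_leftideal-1} into condition~\ref{it:gen_sym_leftideal-2}.

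Finally, the equality $\Sym(G,\chi) = \Sym(G, \rho_G - \chi)$ is immediate by symmetry: condition~\ref{it:gen_sym_leftideal-1} applied to $\rho_G - \chi$ reads ``$\pi(L)\subseteq L$ for all left ideals $L$ affording $\rho_G - \chi$'', which is exactly condition~\ref{it:gen_sym_leftideal-2} for $\chi$. There is no real obstacle, since all the substance is contained in the semisimplicity of $\compl G$ (so that characters determine isomorphism types of left ideals and that $L \oplus L^\perp = \compl G$ as modules) and in the fact that permutations of $G$ act unitarily on $\compl G$; the argument is essentially a bookkeeping exercise built on Lemma~\ref{lm:gen_sym_by_ann}.
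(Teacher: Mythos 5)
Your proposal is correct and follows essentially the same route as the paper: establish the equivalence with condition~\ref{it:gen_sym_leftideal-2} via Lemma~\ref{lm:gen_sym_by_ann}\ref{it:ann_iso} and the observation that the annihilator $\Ann(v)$ affords $\rho_G-\chi$ (with isomorphism of left ideals in $\compl G$ determined by characters), then pass between the two conditions using the orthogonal complement construction. The only difference is that you spell out a few small steps (such as why $L\mapsto L^\perp$ is a character-complementing bijection) that the paper leaves implicit.
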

\begin{proof}
Let $V$ be a $\compl G$-module affording $\chi$,
and  
$v \in \Gen(V)$. 
By Lemma \ref{lm:gen_sym_by_ann}, 
$\pi$ is a generic symmetry of $\chi$ if and only if $\pi$ maps any 
isomorphic copy of $\Ann(v)$ in $\compl G$ onto itself. 
As $\compl G \cong V \oplus \Ann(v)$, 
these are precisely the left ideals affording
the character $\rho_G - \chi$, 
which shows \ref{it:gen_sym_leftideal-2}. 
The equivalence of \ref{it:gen_sym_leftideal-1} and 
\ref{it:gen_sym_leftideal-2} 
follows by taking orthogonal complements,
and by the fact that a left ideal $L$ is afforded by $\chi$ 
if and only if $L^\perp$ is afforded by $\rho_G - \chi$.
\end{proof}

Although we used properties of the complex numbers in the preceding
proof, the result of Proposition~\ref{prp:gen_sym_leftideal}
remains true for arbitrary fields of characteristic zero,
as explained at the beginning of this section.
On the other hand, if the characteristic of $\kk $ divides
the group order, then a left ideal of the group algebra $\kk G$
may not even be cyclic as $\kk G$-module.
(As an example, take the Klein four group $G=C_2\times C_2$
in characteristic~$2$. The kernel of $\kk G\to \kk $
is not cyclic as $\kk G$-module.)
And even when the characteristic does not divide the group order,
it is not true that a left ideal has the same generic symmetries 
as its complement. 
(An example exists with $G=C_7$ cyclic of order $7$
and $\kk $ of characteristic $2$.)

We continue to work over $\compl$, the field of complex numbers.
If a character $\chi$ is afforded by a twosided ideal 
$I$ of $\compl G$,
then $I$ is the unique left ideal of $\compl G$ affording $\chi$, 
and we call $\chi$ an \emph{ideal character}. 
Alternatively, a character $\chi$ is an ideal character
if and only if 
$\langle \chi , \psi \rangle \in \{ 0, \psi(1) \}$ for all 
$\psi \in \Irr(G)$.
In the following, we characterize $\Sym(G,\chi)$ 
and $\widehat{\chi}$ for
ideal characters $\chi$. 
The first statement of Proposition~\ref{prp:sym_of_ideal} is
essentially contained in our earlier 
paper \cite[Theorem 8.5]{FrieseLadisch16}, 
but we give a different proof here.

\begin{prp} \label{prp:sym_of_ideal}
Let $\chi$ be an ideal character of $G$. 
Then $\pi \in \Sym(G)$ is generic for $\chi$ if and only if
\[
  \chi(\pi(g)^{-1}\pi(h)) 
    = \chi(g^{-1}h) \quad\text{for all}\quad g,h \in G \,. 
\]
Furthermore, for all $\pi \in \Sym(G,\chi)$,
\[
\widehat{\chi}(\pi) 
   = \frac{1}{|G|} \sum_{g \in G} \chi(g^{-1} \pi(g))\,.
\]
\end{prp}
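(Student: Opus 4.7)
The plan is to exploit the fact that, since $\chi$ is an ideal character, the unique left ideal $I \subseteq \compl G$ affording $\chi$ is twosided, and equals $\compl G e_\chi$ for the central idempotent $e_\chi = \frac{1}{\card{G}}\sum_{g\in G}\chi(g^{-1})g$. By Proposition~\ref{prp:gen_sym_leftideal}, a permutation $\pi \in \Sym(G)$ is a generic symmetry of $\chi$ if and only if $\pi(I) \subseteq I$. Since $\pi$ extends to a unitary operator on $\compl G$ (it permutes the orthonormal basis $G$), this is equivalent to $\pi$ commuting with the orthogonal projection $P_I$ onto $I$.

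I would first check that $P_I$ is simply left multiplication by $e_\chi$; this uses that $e_\chi$ is central and self-adjoint with respect to the involution induced by $g \mapsto g^{-1}$, the latter following from $\overline{\chi(g^{-1})} = \chi(g)$. Applying $\pi P_I = P_I \pi$ to a basis vector $g \in G$ yields $\pi(e_\chi g) = e_\chi \pi(g)$. Expanding both sides as $\compl$-linear combinations of group elements, comparing the coefficient of an arbitrary $\ell \in G$, and substituting $h = \pi^{-1}(\ell)$, produces the identity $\chi(gh^{-1}) = \chi(\pi(g)\pi(h)^{-1})$ for all $g,h \in G$. Since $\chi$ is a class function, this is equivalent (after renaming) to $\chi(\pi(g)^{-1}\pi(h)) = \chi(g^{-1}h)$, proving the stated equivalence.

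For the formula for $\widehat{\chi}(\pi)$ I would take $V = I$ with generator $v = e_\chi$, so that $\Ann(v) = \compl G (1-e_\chi) = I^\perp$ and $V \iso \compl G/I^\perp$. Since $e_\chi$ is central and a generic symmetry satisfies $\pi(g e_\chi) = \pi(g) e_\chi$, the map $D_v(\pi)$ coincides with the restriction $\pi|_I$, so $\widehat{\chi}(\pi) = \Tr(\pi|_I) = \Tr(\pi P_I)$ as an operator on $\compl G$. Computing this trace in the orthonormal basis $G$ gives $\sum_g [\pi(e_\chi g), g]$; for each $g$, only the term with $\pi(kg) = g$, i.e.\ $k = \pi^{-1}(g)g^{-1}$, survives in $\pi(e_\chi g) = \frac{1}{\card{G}}\sum_k \chi(k^{-1})\pi(kg)$, contributing $\frac{1}{\card{G}}\chi(g\pi^{-1}(g)^{-1})$. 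The substitution $h = \pi^{-1}(g)$ together with the class-function property then gives $\widehat{\chi}(\pi) = \frac{1}{\card{G}} \sum_h \chi(h^{-1}\pi(h))$.

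The main obstacle is the bookkeeping in the coefficient comparison: one has to carefully expand $\pi(e_\chi g) = e_\chi \pi(g)$ in $\compl G$, read off the coefficient of a fixed basis element, and perform the right change of variables to match the condition in the exact form stated. Once that is handled, both directions of the equivalence and the trace formula fall out of the same calculation.
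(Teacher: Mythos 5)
Your proof is correct and follows essentially the same route as the paper: both identify $I = \compl G e_\chi$ via the central idempotent, reduce the generic-symmetry condition to the identity $\pi(e_\chi g) = e_\chi\pi(g)$, and then obtain the character condition by coefficient comparison and the trace formula by computing $\Tr(\pi|_I)$ as a trace on $\compl G$. The only cosmetic difference is how you establish $\pi(e_\chi g) = e_\chi\pi(g)$ for generic $\pi$: you use unitarity of $\pi$ to deduce that it commutes with the orthogonal projection $P_I$, while the paper decomposes $\pi(g)e = \pi(ge)e + \pi(g(1-e))e$ using that $\pi$ also preserves the complementary ideal $\compl G(1-e)$ affording $\rho_G - \chi$; these two arguments encode the same orthogonality fact.
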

\begin{proof}
As $\chi$ is an ideal character, 
there is a twosided ideal $I \leq \compl G$ 
affording $\chi$, 
and $I$ is the unique left ideal affording $\chi$. 
The ideal $I$ is generated by the central 
idempotent~\cite[Theorem~2.12]{isaCTdov}
\[
   e = \frac{1}{\card{G}} \sum_{g \in G} \chi(g^{-1}) g \,.
\]

We claim that $\pi\in \Sym(G,\chi) =\Sym(G,I)$
if and only if $\pi(ge) = \pi(g) e$ for all $g\in G$.
The first assertion  of the proposition then follows
by comparing the coefficients 
of $\pi(h)^{-1}$ in 
the equation $\pi(ge) = \pi(g)e$.

The ``if'' direction of the claim is clear:
When $\pi(ge)=\pi(g)e$ for all $g\in G$, then
$\pi(I)\subseteq I$ and thus 
$\pi\in \Sym(G,\chi)$ by 
Proposition~\ref{prp:gen_sym_leftideal}.

For the ``only if'' direction, we 
first observe that $J= \compl G(1-e)$
is a twosided ideal and the unique left ideal
affording $\rho_G - \chi$.
When $\pi\in \Sym(G,\chi)$, then
$\pi(I)\subseteq I$ and $\pi(J)\subseteq J$,
by Proposition~\ref{prp:gen_sym_leftideal}.
Let $g\in G$ and consider the equation
\[
\pi(g) e = \pi(g e) e + \pi(g (1-e)) e \,.
\]
Since 
$\pi(g e) \in \compl G e$ and 
$\pi(g (1-e)) \in \compl G (1-e)$, 
it follows 
$\pi(g e) e = \pi(g e)$ and $\pi(g (1-e)) e = 0$.
Thus $\pi(g)e = \pi(ge)$, which shows the claim.

Now we prove the formula for $\widehat{\chi}$. 
The equation $\pi(ge) = \pi(g) e$ for all $g \in G$
shows that $D_e(\pi) = \pi_{|I} $,
that is, the restriction
$\pi_{|I}$ is the linear map that shows that
$\pi$ is an orbit symmetry for $e$.
The projection $e_r\colon \compl G\to I$
is given by (left or right) multiplication with $e$.
It follows \begin{align*}
  \widehat{\chi}(\pi) 
    & = \Tr_I(\pi) 
      = \Tr_{\compl G}(\pi \circ e_r) 
      = \frac{1}{\card{G}} 
         \sum_{g \in G} \rho_G( g^{-1} \pi(g e))
    \\
    & = \frac{1}{\card{G}} \sum_{g \in G} \rho_G( g^{-1} \pi(g) e) 
      = \frac{1}{\card{G}} \sum_{g \in G} \chi(g^{-1} \pi(g))\,.
      \qedhere
\end{align*}
\end{proof}

At this point we are able to recognize the generic symmetries of
ideal characters (Proposition~\ref{prp:sym_of_ideal}) and of
irreducible characters (Corollary~\ref{co:gen_sym_irred}).
These are in fact
all the necessary building blocks for recognizing generic symmetries
of arbitrary characters, as we will show now.

\begin{defn}
Let $\chi$ be a character. 
The \emph{ideal part} $\chi_I$ of $\chi$ is given by
\[
\chi_I = \sum_\psi \psi(1) \psi \,,
\]
where $\psi$ runs over all irreducible characters of $G$ with 
$\langle \chi, \psi \rangle = \psi(1)$. 
\end{defn}
If $L$ is any left ideal affording $\chi$, 
then $\chi_I$ is the character of the biggest twosided ideal 
contained in $L$.
In particular, $\chi_I$ is an ideal character.

\begin{thm} \label{thm:gen_sym_decomp}
A permutation $\pi \in \Sym(G)$ is generic for 
a character $\chi$ 
if and only if it is generic for $\chi_I$ 
and for any irreducible constituent of $\chi - \chi_I$.
\end{thm}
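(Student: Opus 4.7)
The plan is to combine the left-ideal characterization of generic symmetries (Proposition~\ref{prp:gen_sym_leftideal}) with the isotypic decomposition $\compl G = \bigoplus_\psi I_\psi$ of the group algebra into its simple two-sided ideals, plus a pair of Grassmannian-style intersection arguments. For the $(\Leftarrow)$ direction, decompose a cyclic module $V$ affording $\chi$ as $V_I \oplus \bigoplus_j V_j$, where $V_I$ affords $\chi_I$ and each $V_j$ is a simple module affording an irreducible constituent of $\chi - \chi_I$ (with each $\psi$ appearing $m_\psi$ times). By Lemma~\ref{lm:iso_inv}, $\Sym(G,V_j) = \Sym(G,\psi)$ and $\Sym(G,V_I) = \Sym(G,\chi_I)$, so the hypothesis places $\pi$ in $\bigcap_i \Sym(G,V_i)$; Lemma~\ref{lm:gen_sym_sum} then yields $\pi \in \Sym(G,V) = \Sym(G,\chi)$.

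For $(\Rightarrow)$, fix $\pi \in \Sym(G,\chi)$, so by Proposition~\ref{prp:gen_sym_leftideal} its $\compl$-linear extension to $\compl G$ preserves every left ideal $L$ affording $\chi$. The first goal is to establish
\[
I_{\chi_I} \;=\; \bigcap_{L\text{ affords }\chi} L,
\]
which immediately gives $\pi \in \Sym(G,\chi_I)$. Each such $L$ decomposes as $\bigoplus_\psi L_\psi$ with $L_\psi \subseteq I_\psi$ a sub-isotypic of simple-multiplicity $m_\psi$; as $L$ varies, these components vary independently, corresponding via Schur's lemma (since $\End_{\compl G}(S_\psi) = \compl$) to $m_\psi$-dimensional subspaces of $\compl^{\psi(1)}$. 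The intersection equals $I_\psi$ when $m_\psi = \psi(1)$ and is zero when $m_\psi < \psi(1)$, so the overall intersection is $I_{\chi_I}$.

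Next, because the extension of $\pi$ permutes the orthonormal basis $G$, it is unitary for $[\cdot,\cdot]$ and therefore also preserves $I_{\chi_I}^\perp$. Fix an irreducible constituent $\psi$ of $\chi - \chi_I$, so $0 < m_\psi < \psi(1)$, and a simple submodule $S \subseteq I_\psi$. Consider left ideals of the form
\[
L \;=\; I_{\chi_I} \oplus L'_\psi \oplus L'',
\]
where $L'_\psi \subseteq I_\psi$ is a sub-isotypic of simple-multiplicity $m_\psi$ \emph{containing $S$}, and $L''$ varies over all left ideals affording $\chi - \chi_I - m_\psi \psi$. Each such $L$ affords $\chi$. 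The $L'_\psi$-component intersects (over all choices containing $S$) to $S$ by the Grassmannian fact that $m_\psi$-dimensional subspaces of $\compl^{\psi(1)}$ through a fixed line meet in that line (using $m_\psi < \psi(1)$), while by the first step applied to $\chi - \chi_I - m_\psi \psi$ (whose ideal part is zero) the $L''$-component intersects to zero. Hence $\bigcap L = I_{\chi_I} \oplus S$, and $\pi$ preserves this subspace. Intersecting with the $\pi$-invariant $I_{\chi_I}^\perp$ gives $\pi(S) \subseteq S$. By Proposition~\ref{prp:gen_sym_leftideal} applied to the irreducible character $\psi$, this yields $\pi \in \Sym(G,\psi)$.

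The main obstacle is the pair of Grassmannian intersection computations identifying $I_{\chi_I}$ and then $I_{\chi_I} \oplus S$ as the appropriate intersections of affording left ideals; the strict inequality $m_\psi < \psi(1)$ for constituents of $\chi - \chi_I$ is precisely what makes both intersections collapse as needed, while constituents with $m_\psi = \psi(1)$ are already absorbed into $\chi_I$ and handled by the ideal-character analysis of Proposition~\ref{prp:sym_of_ideal}.
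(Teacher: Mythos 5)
Your proof is correct, and it rests on the same two pillars as the paper's proof: Lemma~\ref{lm:gen_sym_sum} for the ``if'' direction, and Proposition~\ref{prp:gen_sym_leftideal} for the ``only if'' direction, with the first step of the ``only if'' direction (identifying $\bigcap_L L$ over all left ideals $L$ affording $\chi$ as the twosided ideal affording $\chi_I$) being the same in substance. Where you genuinely diverge is in handling an irreducible constituent $\psi$ of $\chi - \chi_I$ and a simple ideal $S$ affording it. The paper realizes $S$ as $L \cap (S \oplus C)$ for a \emph{single} well-chosen pair of $\pi$-invariant ideals: $L$ is a left ideal affording $\chi$ containing $S$; using $m_\psi < \psi(1)$ one finds an isomorphic copy $S'$ of $S$ with $S' \cap L = 0$, and $C$ a complement of $L \oplus S'$, so that $S \oplus C \iso S' \oplus C$ affords $\rho_G - \chi$ and is therefore $\pi$-invariant by part~\ref{it:gen_sym_leftideal-2} of Proposition~\ref{prp:gen_sym_leftideal}. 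You instead intersect a whole Grassmannian family of $\pi$-invariant ideals affording $\chi$ that all contain $I_{\chi_I} \oplus S$, obtain $I_{\chi_I} \oplus S$ as their intersection, and then peel off $I_{\chi_I}$ by meeting with the $\pi$-invariant $I_{\chi_I}^\perp$. Both arguments are sound; the paper's complementary-ideal trick is shorter and uses only Proposition~\ref{prp:gen_sym_leftideal} as a black box (the unitary structure being already baked into the equivalence of \ref{it:gen_sym_leftideal-1} and \ref{it:gen_sym_leftideal-2} there), while your route is a more uniform ``intersect everything'' strategy that mirrors the $\chi_I$ step and makes the role of the strict inequality $m_\psi < \psi(1)$ especially transparent, at the cost of invoking the orthogonality machinery a second time. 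One small wording note: you should close by saying that $S$ was arbitrary among simple left ideals affording $\psi$, since Proposition~\ref{prp:gen_sym_leftideal} requires invariance of \emph{all} such ideals; your argument clearly supports this, as nothing used any special feature of the chosen $S$.
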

\begin{proof}
The ``if'' part is a direct consequence of Lemma~\ref{lm:gen_sym_sum}.

For the ``only if'' part, let $\pi$ be a generic symmetry of $\chi$. 
By Proposition~\ref{prp:gen_sym_leftideal}, 
we have $\pi(L) \subseteq L$ for all left ideals $L$ affording $\chi$. 
In particular, we have $\pi(I) \subseteq I$, 
where $I$ is the intersection of all these left ideals. 
$I$ is the biggest twosided ideal contained in any left ideal 
$L$ affording $\chi$, i.~e. $I$ is the ideal affording $\chi_I$. 
Hence, by Proposition~\ref{prp:gen_sym_leftideal}, 
$\pi$ is generic for $\chi_I$. 

Now let $\psi$ be any irreducible constituent of $\chi - \chi_I$, 
and let $S$ be any left ideal affording $\psi$. 
Then $S$ is contained in a left ideal $L$ affording $\chi$. 
Since $\psi$ is not a constituent of $\chi_I$, 
there is an isomorphic copy $S'$ of $S$ in $\compl G$ with 
$S' \cap L = 0$. 
Let $C$ be any complement of $L \oplus S'$ in $\compl G$. 
Then $S \oplus C \cong S' \oplus C$ affords $\rho - \chi$, 
whence $\pi(S \oplus C) \subseteq S \oplus C$ 
by Proposition~\ref{prp:gen_sym_leftideal}.
Finally, as $S = L \cap (S \oplus C)$, 
we conclude $\pi(S) \subseteq S$.
Since $S$ was arbitrary, 
$\pi$ is a generic symmetry of $\psi$ by 
Proposition~\ref{prp:gen_sym_leftideal}.
\end{proof}

Putting the previous results together, 
we get a characterization of $\Sym(G,\chi)$
in terms of $\chi$
(Theorem~\ref{ithm:gen_syms_char} from the introduction).

\begin{thm} \label{thm:gen_cls_by_char}
Let $\chi$ be a character of some cyclic $\compl G$-module. 
$\Sym(G,\chi)$ consists precisely of the permutations 
$\pi \in \Sym(G)$ satisfying the following conditions:
\begin{enumthm}
\item \label{it:idealpart_sym} 
      For all $g,h \in G$ we have
      \[ \chi_I( \pi(h)^{-1} \pi(g) ) 
         = \chi_I( h^{-1} g )\,.
      \]
\item For all $g \in G$ we have 
\[ \pi\big(g\Ker(\chi-\chi_I)\big) = \pi(1) g \Ker(\chi-\chi_I)\,. 
\]
\end{enumthm}
\end{thm}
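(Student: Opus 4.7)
The plan is to combine Theorem~\ref{thm:gen_sym_decomp} with the two special cases already handled: Proposition~\ref{prp:sym_of_ideal} for ideal characters and Corollary~\ref{co:gen_sym_irred} for irreducible characters. By Theorem~\ref{thm:gen_sym_decomp}, $\pi\in\Sym(G,\chi)$ if and only if $\pi$ is generic for $\chi_I$ and for every irreducible constituent $\psi$ of $\chi-\chi_I$, which splits the characterization into two independent parts.

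Since $\chi_I$ is an ideal character by construction (every constituent appears with full multiplicity $\psi(1)$), Proposition~\ref{prp:sym_of_ideal} applied to $\chi_I$ yields precisely condition~(i). For each irreducible constituent $\psi$ of $\chi-\chi_I$, writing $K_\psi:=\Ker(\psi)$, Corollary~\ref{co:gen_sym_irred} gives that $\pi$ is generic for $\psi$ if and only if $\pi(gK_\psi)=\pi(1)gK_\psi$ for every $g\in G$. It therefore remains to show that the family of conditions $\pi(gK_\psi)=\pi(1)gK_\psi$, ranging over all such $\psi$ and all $g\in G$, is equivalent to the single condition~(ii).

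Writing $N:=\Ker(\chi-\chi_I)$, I would use the elementary identity $N=\bigcap_{\psi}K_\psi$, where the intersection runs over the irreducible constituents of $\chi-\chi_I$. For the forward implication, given $x\in gN$, the element $x$ lies in $gK_\psi$ for every $\psi$, hence $\pi(x)\in\pi(gK_\psi)=\pi(1)gK_\psi$; intersecting over $\psi$ yields $\pi(x)\in\pi(1)gN$, and equality $\pi(gN)=\pi(1)gN$ follows from equal cardinalities and injectivity of~$\pi$. Conversely, since $N\leq K_\psi$, each coset $gK_\psi$ decomposes as a disjoint union of left cosets of $N$; applying (ii) to each piece and taking the union recovers $\pi(gK_\psi)=\pi(1)gK_\psi$.

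No step presents a real obstacle: the main content is packaged in the earlier results, and the only minor bookkeeping is the elementary coset argument that assembles the $\psi$-by-$\psi$ conditions into~(ii). The closest thing to a subtle point is recognizing that constituents of $\chi_I$ and of $\chi-\chi_I$ are automatically disjoint (those $\psi$ with $\langle\chi,\psi\rangle=\psi(1)$ versus those with $0<\langle\chi,\psi\rangle<\psi(1)$), so the two equivalences can indeed be combined independently without double-counting kernels.
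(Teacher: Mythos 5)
Your proposal is correct and follows essentially the same route as the paper: decompose via Theorem~\ref{thm:gen_sym_decomp}, handle $\chi_I$ with Proposition~\ref{prp:sym_of_ideal}, handle each irreducible constituent of $\chi-\chi_I$ with Corollary~\ref{co:gen_sym_irred}, and then observe that $\Ker(\chi-\chi_I)=\bigcap_\psi\Ker(\psi)$. The only difference is that you spell out the elementary coset bookkeeping (intersecting cosets of the $K_\psi$ to get a coset of $N$, and conversely decomposing each $gK_\psi$ into $N$-cosets) which the paper compresses into a single sentence.
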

\begin{proof}
By Theorem~\ref{thm:gen_sym_decomp}, 
a permutation $\pi \in \Sym(G)$ is a generic symmetry of $\chi$ 
if and only if it is a generic symmetry of $\chi_I$ and of any 
irreducible constituent of $\chi - \chi_I$.
By Proposition~\ref{prp:sym_of_ideal},
$\pi \in \Sym(G, \chi_I)$ is equivalent to \ref{it:idealpart_sym}.
By Corollary~\ref{co:gen_sym_irred},
$\pi \in \Sym(G,\psi)$ for $\psi\in \Irr(G)$
is equivalent to
$\pi(g\Ker(\psi)) =  \pi(1) g \Ker(\psi)$ for all $g \in G$.
Since $\Ker(\chi-\chi_I)$ is the intersection
of the kernels of its irreducible constituents,
the result follows.
\end{proof}

\begin{co}\label{co:gen_closed_suff_cond}
If $\chi-\chi_I$ is faithful, 
then $G$ is generically closed with respect to $\chi$.
\end{co}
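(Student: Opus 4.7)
The plan is to apply Theorem~\ref{thm:gen_cls_by_char} directly and observe that the faithfulness hypothesis collapses condition~(ii) to the statement that $\pi$ is a left multiplication.

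First, recall that the inclusion $\lambda(G) \subseteq \Sym(G,\chi)$ always holds, so only the reverse inclusion requires work. Let $\pi \in \Sym(G,\chi)$. By hypothesis, $\chi - \chi_I$ is faithful, so $\Ker(\chi - \chi_I) = \{1\}$. Condition~(ii) of Theorem~\ref{thm:gen_cls_by_char} then reads
\[
   \pi(g) = \pi(1) g \quad \text{for all } g \in G,
\]
which is exactly the statement that $\pi = \lambda_{\pi(1)}$, the left multiplication by $\pi(1) \in G$. Hence $\pi \in \lambda(G)$, and we conclude $\Sym(G,\chi) = \lambda(G)$, i.e., $G$ is generically closed with respect to $\chi$.

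There is essentially no obstacle here: once Theorem~\ref{thm:gen_cls_by_char} is available, the corollary reduces to the trivial observation that cosets of the trivial subgroup are singletons. Condition~(i) of the theorem need not even be invoked, as condition~(ii) alone pins $\pi$ down completely.
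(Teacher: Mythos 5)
Your proof is correct and matches the intended argument: the paper states this corollary without proof, treating it as an immediate consequence of Theorem~\ref{thm:gen_cls_by_char}, and your observation that faithfulness of $\chi-\chi_I$ collapses condition~(ii) to $\pi = \lambda_{\pi(1)}$ is exactly that derivation.
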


\begin{prp} \label{prp:gen_character}
Let $\chi$ be a character of $G$. 
Then for all 
$\pi \in\Sym(G,\chi)$ we have
\begin{equation*}
\widehat{\chi}(\pi) = \frac{1}{|G|} \sum_{g \in G} \chi(g^{-1}
\pi(g))\,.
\end{equation*}
\end{prp}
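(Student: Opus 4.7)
The plan is to reduce to the two cases already handled: ideal characters (Proposition~\ref{prp:sym_of_ideal}, which contains the exact formula we want for $\chi_I$) and irreducible characters (Corollary~\ref{co:gen_sym_irred}, which gives $\widehat{\psi}(\pi) = \psi(\pi(1))$). The bridge between them is Theorem~\ref{thm:gen_sym_decomp}, which tells us that a generic symmetry of $\chi$ is automatically a generic symmetry of $\chi_I$ and of every irreducible constituent of $\chi - \chi_I$.

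Concretely, I would first decompose the cyclic $\compl G$-module $V$ affording $\chi$ as $V \iso V_I \oplus W_1 \oplus \dotsb \oplus W_k$, where $V_I$ is the sum of all complete isotypic components of $V$ whose irreducible character $\psi$ satisfies $\langle \chi,\psi\rangle = \psi(1)$ (so $V_I$ affords $\chi_I$), and the $W_j$ are irreducible with characters $\psi_j$ satisfying $\chi - \chi_I = \sum_j \psi_j$. Pick any $v\in \Gen(V)$ and write $v = v_I + w_1 + \dotsb + w_k$ accordingly; by the proof of Lemma~\ref{lm:gen_sym_sum} each component is a generator of its summand, and $D_v(\pi)$ acts block‑diagonally as $D_{v_I}(\pi) \oplus \bigoplus_j D_{w_j}(\pi)$. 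Taking the trace and invoking Lemma~\ref{lm:gen_rep_eq} on each summand gives the additivity
\[
   \widehat{\chi}(\pi)
     = \widehat{\chi_I}(\pi) + \sum_{j=1}^k \widehat{\psi_j}(\pi).
\]

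Now I would handle the two summand types. For the ideal part, Proposition~\ref{prp:sym_of_ideal} directly yields $\widehat{\chi_I}(\pi) = \frac{1}{|G|}\sum_{g} \chi_I(g^{-1}\pi(g))$. For each irreducible $\psi_j$, Corollary~\ref{co:gen_sym_irred} gives both $\widehat{\psi_j}(\pi)=\psi_j(\pi(1))$ and the coset identity $\pi(g) \in \pi(1)gK_j$, where $K_j=\Ker\psi_j$. Since $\psi_j$ is inflated from $G/K_j$, it is constant on the cosets of $K_j$, so $\psi_j(g^{-1}\pi(g)) = \psi_j(g^{-1}\pi(1)g)$ for every $g$. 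Averaging over $g$ and using that $\psi_j$ is a class function,
\[
   \frac{1}{|G|}\sum_{g\in G}\psi_j(g^{-1}\pi(g))
    = \frac{1}{|G|}\sum_{g\in G}\psi_j(g^{-1}\pi(1)g)
    = \psi_j(\pi(1)) = \widehat{\psi_j}(\pi).
\]

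Summing these equalities and using $\chi = \chi_I + \sum_j \psi_j$ delivers the claimed formula. The only slightly delicate step is the additivity of $\widehat{\chi}$ under the chosen decomposition, which is not spelled out earlier but is an immediate consequence of the block‑diagonal realization of $D_v(\pi)$ that the proof of Lemma~\ref{lm:gen_sym_sum} already provides, together with Lemma~\ref{lm:gen_rep_eq}; everything else is essentially bookkeeping on top of Proposition~\ref{prp:sym_of_ideal} and Corollary~\ref{co:gen_sym_irred}.
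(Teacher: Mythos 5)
Your proof is correct and follows essentially the same path as the paper's: reduce via Theorem~\ref{thm:gen_sym_decomp} to ideal and irreducible characters, then invoke Proposition~\ref{prp:sym_of_ideal} and Corollary~\ref{co:gen_sym_irred}. The paper's reduction step is terse --- it simply asserts that by Theorem~\ref{thm:gen_sym_decomp} it suffices to treat $\chi_I$ and the irreducible constituents separately --- whereas you usefully make explicit the block-diagonal realization of $D_v(\pi)$ that underlies the additivity $\widehat{\chi}(\pi) = \widehat{\chi_I}(\pi) + \sum_j \widehat{\psi_j}(\pi)$, which is the one point the paper leaves implicit.
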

\begin{proof}
By Theorem~\ref{thm:gen_sym_decomp}, 
it suffices to prove the claim for
ideal characters and for irreducible characters.

For ideal characters, the assertion is true by 
Proposition~\ref{prp:sym_of_ideal}. 
Let $\chi$ be irreducible. 
Then, by Corollary~\ref{co:gen_sym_irred}, 
$\pi(g)\in \pi(1) g \Ker(\chi)$ for all $g \in G$,
and thus $g^{-1}\pi(g) \in \pi(1) \Ker(\chi)$.
Therefore
\[
  \frac{1}{\card{G}} 
    \sum_{g \in G} \chi( g^{-1} \pi(g) ) 
   = \chi(\pi(1)) 
   = \widehat{\chi}(\pi)\,,
\]
where the last equation follows from Corollary~\ref{co:gen_sym_irred}.
\end{proof}

\section{Classification of affine symmetry groups 
\texorpdfstring{\\}{} of orbit polytopes}
\label{sec:answerbabai}
In this section, we classify the groups
which are isomorphic to the affine symmetry group
of an orbit polytope.
Suppose the finite group $G$ acts on $\reals^d$ 
by affine transformations. 
Recall that an \emph{orbit polytope} of $G$ is the 
convex hull of a $G$-orbit of a point~$v$:
\[  
   P(G,v) := \conv\{ gv \colon g\in G\}\,. 
\]

(As $G$ fixes the barycenter of $P(G,v)$,
 we can choose coordinates such that $G$ acts 
 linearly.)
Let us say that $P(G,v)$ is an \emph{euclidean} orbit polytope,
if $G$ acts by (euclidean) isometries on $\reals^d$.
The euclidean symmetry group (or isometry group)
of a $d$-dimensional polytope 
$P\subseteq \reals^d$ 
is the group of isometries of $\reals^d$ mapping $P$ onto itself.
(In the literature, the euclidean symmetry group
 is often called 
 ``the'' symmetry group of $P$.
 For the sake of clarity, we do not follow this convention 
 here.)
The affine symmetry group of a $d$-dimensional polytope
$P \subseteq \reals^d$ is the group of all
affine transformations of $\reals^d$ mapping $P$ onto itself.
In both cases, a symmetry maps $P$ onto itself
if and only if it
permutes the vertices of~$P$.

Babai~\cite{Babai77} classified the finite groups 
which are isomorphic to the isometry group
of an euclidean orbit polytope, and
asked which abstract finite groups occur as
the affine symmetry group of an orbit polytope.
In this section, we answer this question.
We begin by recalling Babai's classification.
Following Babai, we call a finite group
$G$ \emph{generalized dicyclic}, if it has an abelian
subgroup $A$ of index $2$ and an element $g\in G\setminus A$
of order $4$ such that $g^{-1}ag=a^{-1}$ for all $a\in A$.

\begin{thm}[Babai~\cite{Babai77}] \label{thm:Babai77}
  Let $G$ be a finite group.
  Then $G$ is not isomorphic to the isometry group 
  of an euclidean orbit polytope, 
  if and only if one of the following holds:
  \begin{enumthm}
  \item $G$ is abelian, but not elementary $2$-abelian.
  \item $G$ is generalized dicyclic.
  \end{enumthm}
  Any other finite group is isomorphic to the isometry group 
  of an euclidean orbit polytope.
\end{thm}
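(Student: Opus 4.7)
The strategy is to treat the two implications separately. The negative direction (``only if'') amounts to producing, for each abelian group of exponent greater than $2$ and each generalized dicyclic group $G$, a nontrivial euclidean symmetry that is not induced by left multiplication, regardless of which faithful orthogonal representation and which orbit one picks. The positive direction (``if'') requires, for every other finite group $G$, the explicit construction of an orbit polytope whose isometry group is precisely the regular image $\lambda(G)$.

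For the negative direction, I would exploit the permutation $\iota\colon g \mapsto g^{-1}$. When $G$ is abelian, $\iota$ is a group automorphism, and in any real orthogonal representation it lifts to an orthogonal involution $T$ of $V$: decomposing into real-irreducible summands (each of dimension $1$ or $2$), on each summand inversion is intertwined either by the identity (in the $1$-dimensional case) or by a suitable reflection (in the $2$-dimensional rotational case). Choosing $T$ so that $T(v)=v$ for a generating point $v$, the map $T$ sends $gv$ to $g^{-1}v$, giving a euclidean orbit symmetry that lies in $\lambda(G)$ only if $G$ has exponent at most $2$. A parallel construction handles generalized dicyclic $G = A \sqcup Aj$: the permutation defined by $a \mapsto a^{-1}$ and $aj \mapsto aj$ for $a \in A$ is again intertwined on any faithful orthogonal representation (using $j^{-1}aj = a^{-1}$ and $j^2$ central), and it is easily checked to lie outside $\lambda(G)$.

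For the positive direction, I would invoke the character machinery of Section~\ref{sec:characters}. By Corollary~\ref{co:gen_closed_suff_cond}, if $\chi$ is a character with $\chi - \chi_I$ faithful, then $G$ is generically closed with respect to $\chi$; realizing such a $\chi$ as a real orthogonal representation and taking a generic orbit polytope then yields a vertex-transitive polytope with isometry group $\lambda(G) \iso G$. The task thus reduces to finding, for every finite group $G$ outside the exclusion list, a faithful real orthogonal character whose ideal part leaves a faithful complementary piece. For a non-abelian, non-dicyclic group one typically assembles a non-self-dual irreducible (or a pair of complex-conjugate irreducibles realized over $\reals$) together with enough additional irreducibles to make the total representation faithful without producing an ideal character. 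Elementary abelian $2$-groups are the opposite extreme: the inversion argument is void (since $\iota = \id$), and one realizes $G$ directly as the isometry group of a suitably perturbed hypercube-type polytope.

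The main obstacle will be the case analysis in the positive direction, particularly for $2$-groups close to the dicyclic family, where one must carefully tune the representation so that neither inversion nor any group-specific structural automorphism becomes a generic symmetry; here the sufficient condition of Corollary~\ref{co:gen_closed_suff_cond} has to be checked by hand against the list of irreducibles. By contrast, the contrast with Theorem~\ref{ithm:class_aff}, which excludes the small elementary abelian $2$-groups of orders $4$, $8$, $16$, does not appear here: in the euclidean setting these groups are realizable via the hypercube construction, and the additional affine rigidity that eliminates them only emerges when arbitrary affine transformations (not merely isometries) are allowed.
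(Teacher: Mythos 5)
The paper does not prove this theorem; it is stated with the attribution ``Babai~\cite{Babai77}'' and invoked as a citation. There is thus no internal proof to compare against, and your proposal is in effect an attempt to reconstruct Babai's 1977 argument.

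Your treatment of the abelian case is sound: the Gram matrix $\langle gv, hv\rangle = \langle v, g^{-1}hv\rangle$ depends only on $g^{-1}h$ up to inversion (by orthogonality of the representation), and for abelian $G$ one has $g^{-1}h^{-1}\cdot gh = 1$, so inversion preserves all pairwise distances of the orbit and hence is an isometry fixing $v$; it is not a left translation once the exponent exceeds $2$. Your intertwiner-on-each-summand picture is a more complicated packaging of the same fact, but it is correct.

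The generalized dicyclic case, however, contains a genuine error. You propose the permutation $a\mapsto a^{-1}$, $aj\mapsto aj$, but this is not a symmetry of every orthogonal orbit polytope of such a $G$. A necessary condition for a permutation $\pi$ with $\pi(1)=1$ to be an isometry of all such orbits is $\pi(g)^{-1}\pi(h)\in\{g^{-1}h,\ (g^{-1}h)^{-1}\}$ for all $g,h$ (for the regular orthogonal representation and a generic $v$, the function $x\mapsto\langle v,xv\rangle$ separates precisely the sets $\{x,x^{-1}\}$). Your $\pi$ violates this already in the generalized quaternion group $Q_{16}$: with $A=\langle a\rangle$, $a^8=1$, $j^2=a^4$, $jaj^{-1}=a^{-1}$, take $g=a$, $h=j$; then $\pi(g)^{-1}\pi(h)=aj$, while $\{g^{-1}h,(g^{-1}h)^{-1}\}=\{a^7j,\,a^3j\}$. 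The permutation Babai actually uses is the one fixing $A$ pointwise and sending $g\mapsto g^{-1}$ (equivalently $g\mapsto gj^2$) on $G\setminus A$; one checks case by case, using $j^2\in\Zent(G)$ and $(cj)^{-1}=cj^{-1}$ for $c\in A$, that this one does satisfy the required condition. Note that composing the correct permutation with yours gives global inversion $\iota$, which for nonabelian $G$ is not even a group homomorphism, so they cannot both be generic symmetries.

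Your positive direction is a viable but anachronistic route: Corollary~\ref{co:gen_closed_suff_cond} and Corollary~\ref{co:trivial_sker} give an \emph{affine} realization whenever $\NKer_{\reals}(G)=\{1\}$, which a fortiori gives a euclidean one, and the remaining families with nontrivial $\NKer_{\reals}(G)$ that are neither abelian nor generalized dicyclic (namely $Q_8\times C_4\times C_2^r$ and $Q_8\times Q_8\times C_2^r$) are handled by Lemmas~\ref{lm:Q8C4C2_closed} and~\ref{lm:Q8Q8C2_closed}. This chain does not depend circularly on Babai (Babai enters only in Corollary~\ref{co:babaicor}, i.e.\ the negative direction). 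However, your sketch omits these two exceptional families entirely, and the ``suitably perturbed hypercube-type polytope'' for elementary abelian $2$-groups of orders $4$, $8$, $16$ is asserted without construction; this is exactly the place where the euclidean and affine classifications diverge, so it cannot be waved away. Babai's original proof constructs the orbit polytopes directly and is more elementary than routing through the generic-symmetry machinery, but your route would work once the generalized-dicyclic permutation is corrected and the elementary-abelian and $Q_8$-families are filled in.
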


Now if a finite group $G$ (say) is the affine symmetry group 
of a polytope $P\subseteq \reals^d$,
then there is an affine automorphism 
$\sigma$ of $\reals^d$ 
such that $\sigma G \sigma^{-1}$ preserves lengths.
Since  $\sigma G \sigma^{-1}$ is the affine 
symmetry group of the polytope $\sigma(P)$, it is also the 
euclidean symmetry group of the polytope $\sigma(P)$.
Thus as an immediate corollary of Babai's result,
we get the following:
\begin{co}\label{co:babaicor}
  The following groups are not isomorphic to the affine symmetry group
  of an orbit polytope: abelian groups of exponent greater than $2$,
  and generalized dicyclic groups.
\end{co}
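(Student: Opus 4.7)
The plan is to reduce the statement to Babai's classification (Theorem~\ref{thm:Babai77}) by passing, via an affine change of coordinates, from a putative affine realization of $G$ to a euclidean one. Assume for contradiction that $G$ lies in one of the two forbidden families and is the affine symmetry group of an orbit polytope $P\subseteq \reals^d$. I will produce an affine automorphism $\sigma$ of $\reals^d$ for which $\sigma(P)$ is a euclidean orbit polytope whose euclidean symmetry group is exactly $\sigma G\sigma^{-1}\iso G$; Theorem~\ref{thm:Babai77} then rules out both families simultaneously.

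To construct $\sigma$, I would use the fact that the finite affine group $G$ fixes the centroid of $P$; translating this centroid to the origin reduces to a situation in which $G$ acts linearly. A finite subgroup of $\GL(d,\reals)$ always preserves some positive definite symmetric bilinear form — for example, the average of the standard inner product over the group — and a linear change of coordinates bringing this form into standard shape conjugates $G$ into $O(d)$. Composing the translation with this linear isomorphism yields the desired affine transformation $\sigma$ with $\sigma G\sigma^{-1}$ a group of euclidean isometries.

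Next I would verify that $\sigma G\sigma^{-1}$ really is the \emph{euclidean} symmetry group of the euclidean orbit polytope $\sigma(P)$. Writing $P=\conv(Hv)$ for a finite group $H$ acting affinely on $\reals^d$, one has $\sigma(P)=\conv\bigl(\sigma H\sigma^{-1}\cdot \sigma(v)\bigr)$, which is still an orbit polytope. Affine conjugation by $\sigma$ identifies the affine symmetry group of $P$ with that of $\sigma(P)$, so the latter equals $\sigma G\sigma^{-1}$. Since every element of $\sigma G\sigma^{-1}$ is now a euclidean isometry, it lies in the euclidean symmetry group of $\sigma(P)$, and the reverse containment is trivial because euclidean symmetries are affine; hence the two groups coincide.

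Applying Theorem~\ref{thm:Babai77} to $\sigma G\sigma^{-1}\iso G$ then excludes both abelian groups of exponent greater than $2$ and generalized dicyclic groups, contradicting the assumption. There is no serious obstacle here: the argument simply repackages Babai's classification using the classical fact that every finite subgroup of the affine group of $\reals^d$ is affinely conjugate to a group of isometries, together with the straightforward verification that the property of being an orbit polytope and the identification of both the affine and euclidean symmetry groups behave correctly under this conjugation.
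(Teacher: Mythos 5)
Your proof is correct and follows essentially the same route as the paper: conjugate the putative affine realization into the orthogonal group via an invariant inner product, note that the affine and euclidean symmetry groups of the conjugated orbit polytope then coincide, and invoke Babai's classification (Theorem~\ref{thm:Babai77}). The paper states the existence of the conjugating affine automorphism $\sigma$ without spelling out the averaging construction, but otherwise the argument is identical.
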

On the other hand, it may happen that $G$ is isomorphic 
to the isometry group of an (euclidian)
orbit polytope, but not to the affine symmetry group of an
orbit polytope.
For example, the Klein four group, $V_4$,
is isomorphic to the isometry group of a rectangle
with two different side lengths.
The affine symmetry group of a rectangle
is isomorphic to the group of the square and has order $8$,
and indeed, the Klein four group can not be realized as
the affine symmetry group of an orbit 
polytope~\cite[Lemma~9.1]{FrieseLadisch16}.

In order to apply the results of the previous sections
to our classification problem,
we need the following observation.
\begin{lemma} \label{lm:affsym_genclosed}
  A finite group $G$ is isomorphic to the affine symmetry group of
  an orbit polytope
  if and only if $G$ is generically closed with respect to some 
  cyclic module over\/ $\reals G$.
\end{lemma}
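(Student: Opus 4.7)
The plan is to use the standard homogenization: affine maps on $V_0$ are exactly linear maps on $V_0 \oplus \reals$ fixing the second coordinate, so the affine symmetries of an orbit polytope $\conv(Gv_0) \subseteq V_0$ match the linear symmetries of the lifted orbit $G(v_0,1) \subseteq V_0 \oplus \reals$, where $G$ acts trivially on the second summand. In the paper's notation, $\aff(\conv(Gv_0))$ is canonically isomorphic to $\GL(Gw)$ for $w = (v_0,1)$.

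For the \emph{only if} direction, I would start from $G \iso H = \aff(P)$ for an orbit polytope $P \subseteq \reals^d$; after translating the barycenter to the origin, we may assume $H$ acts linearly and faithfully on $V := \reals^d$ with $P = \conv(Hv_0)$. Set $W := V \oplus \reals$ with trivial action on the second summand, and $w_0 := (v_0,1)$. A direct check (using that $Hv_0$ affinely spans $V$) shows $W = \reals H w_0$ is cyclic, and the homogenization identifies $\GL(Hw_0)$ with $\aff(P) = H$, giving $\GL(Hw_0) = D(H)$ as subgroups of $\GL(W)$. Corollary~\ref{co:closure} then yields $\GL(Hw) = D(H)$ for every $w \in W$ generic with respect to $H$; faithfulness of $H$ on $W$ together with Lemma~\ref{lm:rep_kernel} forces $\Ker D_w = 1$. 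Combining these with $\lambda(H) \subseteq \Sym(H,W) = \Sym(H,w)$ and $\card{\Sym(H,w)} = \card{\GL(Hw)} = \card{H}$, one concludes $\Sym(H,W) = \lambda(H)$.

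For the \emph{if} direction, suppose $G$ is generically closed with respect to a cyclic $\reals G$-module $V$. I would first reduce to the case where $V$ contains a trivial summand. If it does not, replace $V$ by $V' := V \oplus \reals$: this is still cyclic (the trivial character has multiplicity one in $V'$), and Theorem~\ref{thm:gen_cls_by_char} forces $\Sym(G,V') = \Sym(G,V)$, because $\chi_{V',I} = \chi_{V,I} + 1_G$ and $\Ker(\chi_{V'} - \chi_{V',I}) = \Ker(\chi_V - \chi_{V,I})$, so both characterizing conditions are unchanged. Writing $V = V_0 \oplus \reals e$ with $e$ nonzero and $G$-fixed, a generic $v \in V$ has the form $v_0 + ce$ with $c \neq 0$ (the intersection of two nonempty Zariski-open sets), and after rescaling $e$ we may take $c = 1$. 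The orbit polytope $P := \conv(Gv_0) \subseteq V_0$ then satisfies $\aff(P) \iso \GL(Gv)$ by the homogenization, and since $v$ is generic, $\GL(Gv) = D_v(\Sym(G,V)) = D_v(\lambda(G)) = D(G) \iso G$, the last isomorphism being faithfulness (Remark~\ref{rmk:triv_sym}).

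The main obstacle is the reduction step in the ``if'' direction: verifying via Theorem~\ref{thm:gen_cls_by_char} that adjoining a trivial summand preserves generic closure, both for the ideal-part trace condition and for the kernel-coset condition. The cyclicity of the homogenized module and the identification of the affine symmetry group with $\GL(Gw)$ are routine but crucial pieces of bookkeeping.
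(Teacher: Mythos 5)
Your proof is correct, and in the ``only if'' direction you use the same key ingredient as the paper, namely Corollary~\ref{co:closure}. The route differs in two places, both stemming from your decision to work with the explicit homogenization $V \oplus \reals$ rather than directly with the linear span $\reals Gv$. In the ``only if'' direction the homogenization is a detour: once the barycenter is translated to the origin, the affine symmetry group of $P$ already \emph{is} a group of linear maps of $\aff(Gv) = \reals Gv$, so the paper simply sets $V := \reals Gv$ and applies Corollary~\ref{co:closure} with $\widehat{G} = \GL(Gv) = G$; your lift to $W = V \oplus \reals$ reaches the same conclusion but adds a superfluous trivial summand. In the ``if'' direction the difference is more substantial: the paper avoids any reduction and simply distinguishes the two cases $0 = \tfrac{1}{\card{G}}\sum_g gv$ (where the linear and affine symmetry groups of $P(G,v)$ literally coincide) and $0 \neq \tfrac{1}{\card{G}}\sum_g gv$ (where restriction from $\reals Gv$ to $\aff(Gv)$ is an isomorphism onto the affine symmetry group). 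Your approach instead forces the second case by adjoining a trivial summand, and you invoke Theorem~\ref{thm:gen_cls_by_char} to justify that this adjunction does not change the generic symmetry group. That step is correct---$\chi_{V',I} = \chi_{V,I} + 1_G$ and $\chi_{V'} - \chi_{V',I} = \chi_V - \chi_{V,I}$, so both conditions in Theorem~\ref{thm:gen_cls_by_char} are unaffected---but it imports the full character-theoretic machinery of Section~\ref{sec:characters} into a statement the paper proves by a self-contained convex-geometric observation. The net effect is that your argument is sound but slightly heavier and less elementary; the barycenter observation in the paper does the work of both your homogenization and your trivial-summand reduction at once.
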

\begin{proof} 
  If $G$ is generically closed with respect to some
  cyclic module $V $,
  then $G$ is isomorphic to the linear symmetry group of
  the orbit polytope $P(G,v)$ for every generic $v\in V$.
  It is not difficult to see
  that the barycenter $(1/\card{G}) \sum_{g\in G} gv$
  is the only point 
  in the \emph{affine} hull of $Gv$ which is fixed
  by $G$~\cite[Lemma~2.1]{FrieseLadisch16}.
  Thus either $0 = (1/\card{G})\sum_g gv$ and the affine
  and the linear symmetry group coincide, or
  $0\neq (1/\card{G})\sum_g gv$ and the affine and the linear 
  symmetry group of $P(G,v)$ are isomorphic
  (by restriction from the linear to the affine hull
  of $Gv$).
  
  Conversely, suppose that $G$ is the affine symmetry group of 
  an orbit polytope $P(H,v)$ of a group $H$.
  Then clearly $P(H,v) = P(G,v)$.     
  Without loss of generality, we may assume that
  $H$ and $G$ are linear, by choosing the barycenter
  of $P(G,v)$ as origin of our coordinate system.
  Set $V = \reals G v$, the $\reals$-linear span of $Gv$, 
  so that $V$ is a cyclic $\reals G$-module.
  
  We have $Hv=Gv$ and
  $G = \GL(Hv)$.
  Corollary~\ref{co:closure} yields
  that $G = \GL(Gw)$ for any $w\in V$ 
  which is generic for $G$.
  Since $G$ is by definition a subgroup of $\GL(V)$,
  this is equivalent to $G$ being generically closed
  with respect to $V$.
\end{proof}

We need to recall some representation theory.
We have already seen that a character 
\[ \gamma = \sum_{\chi\in \Irr(G)} n_{\chi}\chi 
\]
is afforded by a cyclic $\compl G$-module,
or a left ideal of $\compl G$, 
if and only if $n_{\chi}\leq \chi(1)$ for all
$\chi\in \Irr(G)$.
We now characterize which characters are afforded 
by a left ideal of $\kk G$, where
$\kk \subseteq \compl$.

Let $\chi\in \Irr G$.
Recall that the \emph{Schur index} 
$m_{\kk }(\chi)$ of $\chi$ over $\kk $ 
is by definition the smallest positive integer
$m$ such that $m\chi$ is afforded by a representation with
entries in $\kk (\chi)$,
where $\kk (\chi)$ is the field generated by the values of $\chi$.

For $\kk = \reals$,
only three different cases are possible,
which can be recognized
by the \emph{Frobenius-Schur-indicator}
\[ \FS(\chi) := \frac{1}{\card{G}}
          \sum_{g\in G} \chi(g^2)\,.
\]
Namely, when
$\FS(\chi) = 1$, then
$\chi = \overline{\chi}$ and $m_{\reals}(\chi)=1$,
so $\chi$ is afforded by a representation over $\reals$.
When $\FS(\chi) = 0$, then
$\chi\neq \overline{\chi}$ and $m_{\reals}(\chi)=1$.
Finally, when $\FS(\chi) = -1$, then
$\chi = \overline{\chi}$, but $\chi$ is not afforded by a 
representation over $\reals$,
and $m_{\reals}(\chi) = 2$ \cite[Chapter~4]{isaCTdov}.

\begin{lemma}\label{lm:char_kmod}
  Let\/ $\kk \subseteq \compl$, and let
  \[ \gamma = \sum_{\chi\in \Irr(G)} n_{\chi}\chi
  \]
  be a character. 
  Then $\gamma$ is the character of a left ideal of\/ $\kk G$
  if and only if the following conditions hold:
  \begin{enumthm}
  \item $\gamma$ has values in $\kk $,
  \item $m_{\kk }(\chi)$ divides $n_{\chi}$ 
        for all $\chi\in \Irr G$,
  \item $n_{\chi} \leq \chi(1)$ for all $\chi \in \Irr(G)$.
  \end{enumthm}
\end{lemma}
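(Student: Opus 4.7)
The plan is to reduce the problem to the Wedderburn decomposition of $\kk G$ together with the identification of the irreducible $\kk G$-modules via Galois orbits and Schur indices. \emph{Necessity} is routine: if a left ideal $L\subseteq \kk G$ affords $\gamma $, then $\gamma (g)=\Tr_L(g)$ is a trace of a $\kk $-linear endomorphism of a $\kk $-vector space, giving (i); scalar extension $L\otimes _{\kk }\compl\subseteq \compl G$ forces $n_{\chi }\leq \chi (1)$ by comparing with the regular character of $\compl G$, giving (iii); and (ii) is essentially the definition of the Schur index $m_{\kk }(\chi )$, which asserts that the multiplicity of $\chi $ in the $\compl G$-character of any $\kk G$-module must be divisible by $m_{\kk }(\chi )$.

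For \emph{sufficiency}, I will use the standard structure theorem (citing Isaacs, \emph{Character Theory of Finite Groups}, Ch.~10) that the semisimple algebra $\kk G$ decomposes as
\[ \kk G \;=\; \bigoplus_{O} B_O\,, \]
where $O$ runs through the orbits of $\Gamma := \operatorname{Gal}(\kk (\Irr G)/\kk )$ on $\Irr G$. Each $B_O$ is a simple $\kk $-algebra with a unique (up to isomorphism) irreducible $\kk G$-module $M_O$; its $\compl G$-character is $m_{\kk }(\chi )\,\psi_O$ with $\psi_O := \sum_{\chi'\in O}\chi'$ (and $\chi \in O$ arbitrary), and $B_O\iso M_O^{k_O}$ as $\kk G$-modules with $k_O = \chi (1)/m_{\kk }(\chi )$. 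Granting this, condition~(i) forces $n_{\chi }$ to be $\Gamma$-invariant, hence to depend only on the orbit $O$; write $n_O$ for this common value. Condition~(ii) makes $\ell_O := n_O/m_{\kk }(\chi )$ a non-negative integer, and condition~(iii) yields $\ell_O \leq k_O$. Therefore I can pick a $\kk G$-submodule $L_O \iso M_O^{\ell_O}$ inside each block $B_O$, and the direct sum $L := \bigoplus_O L_O$ is a left ideal of $\kk G$ whose $\compl G$-character is
\[ \sum_O \ell_O\, m_{\kk }(\chi )\,\psi_O \;=\; \sum_O n_O\,\psi_O \;=\; \gamma\,, \]
as required.

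The only real obstacle is unpacking the Wedderburn structure: one must know that the centre of $B_O$ is $\kk (\chi )$ for any $\chi \in O$, that $B_O \iso M_{k_O}(D_O)$ for a division $\kk (\chi )$-algebra $D_O$ of dimension $m_{\kk }(\chi )^2$ over $\kk (\chi )$, and that $M_O$ base-changed to $\compl $ has character $m_{\kk }(\chi )\,\psi_O$. A dimension count ($\dim_{\kk } B_O = [\kk (\chi ):\kk ]\,\chi (1)^2$ on one hand, $k_O^2\, m_{\kk }(\chi )^2\,[\kk (\chi ):\kk ]$ on the other) pins down $k_O = \chi (1)/m_{\kk }(\chi )$. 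Once these structural facts are cited, the rest of the proof is bookkeeping block-by-block.
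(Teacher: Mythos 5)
Your proof is correct and takes essentially the same approach as the paper: both rely on Schur index theory and the Wedderburn decomposition of $\kk G$, identifying the simple $\kk G$-modules with Galois orbits on $\Irr(G)$ and counting multiplicities of each simple module in the regular module versus in a module affording $\gamma$. The only difference is organizational — the paper first invokes \cite[Corollary~10.2]{isaCTdov} to dispatch the equivalence of (i)$+$(ii) with realizability over $\kk$ and then does the multiplicity count, whereas you carry out the block-by-block bookkeeping for all three conditions at once.
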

\begin{proof}
    It follows from the general theory of the Schur index
    that $\gamma$ is the character of a representation 
    with entries in $\kk $
    if and only if the first two conditions 
    hold \cite[Corollary~10.2]{isaCTdov}. 
    
    Let $S$ be a simple $\kk G$-module.
    The character of $S$ has the form
    \[ m_{\kk }(\chi) \sum_{\alpha} \chi^{\alpha}\,,
    \]
    for some $\chi\in \Irr(G)$,
    where $\alpha$ runs over the Galois group
    of $\kk (\chi)/\kk $, so that
    $\chi^{\alpha}$ runs over the
    Galois conjugacy class of $\chi$ over $\kk $.
    Thus $S$ occurs with multiplicity 
    $\chi(1)/m_{\kk }(\chi)$ as summand of the 
    regular module $\kk G$, 
    and with multiplicity $n_{\chi}/m_{\kk }(\chi)$
    in a $\kk G$-module affording $\gamma$.
    Thus a $\kk G$-module affording $\gamma$ is a direct summand 
    of $\kk G$ if and only if $n_{\chi}\leq \chi(1)$
    for all $\chi$.
\end{proof}

\begin{defn}
For a finite group $G$ and a field $\kk \subseteq \compl$, set 
\[ \NKer_{\kk }(G) :=
   \bigcap \{ \Ker(\chi)\colon \chi(1) > m_{\kk }(\chi)
           \}\,.
\]
If $\chi(1)= m_{\kk }(\chi)$ for all $\chi\in \Irr G$,
then we set $\NKer_{\kk }(G)=G$.
\end{defn}

Consider the character
\[ \gamma := \sum_{ \substack{ \chi\in \Irr G \\ 
                               \chi(1) > m_{\kk }(\chi) 
                             }
                  } m_{\kk }(\chi) \chi\,.
\]
Then $\Ker \gamma = \NKer_{\kk }(G)$, 
and $\gamma$ is afforded by a left ideal of $\kk G$,
and the ideal part of $\gamma$ is zero.
So as a corollary of 
Corollary~\ref{co:gen_closed_suff_cond} 
and Lemma~\ref{lm:affsym_genclosed},
we get the following.

\begin{co} \label{co:trivial_sker}
Let $\kk \subseteq \reals$.
If\/ $\NKer_{\kk }(G) = \{1\}$, then
$G$ is isomorphic to the affine symmetry group of 
an orbit polytope, such that its vertices
have coordinates in $\kk $.
\end{co}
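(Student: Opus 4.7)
The plan is to realize $G$ as the affine symmetry group of a $\kk$-rational orbit polytope by means of the character $\gamma = \sum_{\chi(1) > m_{\kk}(\chi)} m_{\kk}(\chi)\chi$ introduced just before the statement. By construction $\Ker(\gamma) = \NKer_{\kk}(G) = \{1\}$ and $\gamma_I = 0$, so $\gamma - \gamma_I = \gamma$ is faithful. By Lemma~\ref{lm:char_kmod}, $\gamma$ is the character of a left ideal $V \leq \kk G$, which is thus a cyclic $\kk G$-module. As explained at the beginning of Section~\ref{sec:characters}, the results of that section extend from $\compl$ to any field of characteristic zero via scalar extension; in particular Corollary~\ref{co:gen_closed_suff_cond} applies to $\gamma$, showing that $G$ is generically closed with respect to $V$, and by Proposition~\ref{prp:scalar_ext} also with respect to the $\reals G$-module $V \otimes_{\kk} \reals$.

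Next, by Theorem~\ref{thm:existence_generic} there is a generic point $v \in V$; in particular $v$ has $\kk$-rational coordinates. I claim $v$ is still generic when viewed as a point of $V \otimes_{\kk} \reals$. Indeed, $v$ is a generator since $\reals G \cdot v \supseteq \kk G \cdot v = V$, and the stabilizer $G_v$ is unchanged. For the orbit-symmetry condition, suppose $\pi \in \Sym(G)$ is realized over $\reals$ by some linear endomorphism of $V \otimes_{\kk} \reals$; this endomorphism is uniquely determined on the spanning set $\{gv : g \in G\} \subseteq V$ by $gv \mapsto \pi(g)v \in V$, so it restricts to a $\kk$-linear endomorphism of $V$. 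Hence $\Sym(G,v)$ computed over $\reals$ coincides with $\Sym(G,v)$ computed over $\kk$, which by the choice of $v$ and Proposition~\ref{prp:scalar_ext} equals $\Sym(G, V \otimes_{\kk} \reals)$.

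Finally, the proof of Lemma~\ref{lm:affsym_genclosed} shows that once $G$ is generically closed and $v$ is generic in the cyclic $\reals G$-module $V \otimes_{\kk} \reals$, the affine symmetry group of the orbit polytope $P(G,v)$ is isomorphic to $G$. Since $v \in V \subseteq \kk^{\dim_{\kk} V}$, every vertex $gv$ of $P(G,v)$ has coordinates in $\kk$, yielding the desired polytope. The main delicate point is the passage between the $\kk$-rational and $\reals$-rational notions of ``generic''; this reduces to Proposition~\ref{prp:scalar_ext} combined with the rigidity observation that an orbit-symmetry of a generator is forced by its values on the orbit.
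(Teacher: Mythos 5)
Your proposal is correct and follows the same route as the paper: it builds the character $\gamma = \sum_{\chi(1) > m_{\kk}(\chi)} m_{\kk}(\chi)\chi$ with $\gamma_I = 0$ and $\Ker\gamma = \NKer_{\kk}(G) = \{1\}$, realizes it by a left ideal $V \leq \kk G$ via Lemma~\ref{lm:char_kmod}, and combines Corollary~\ref{co:gen_closed_suff_cond} with Lemma~\ref{lm:affsym_genclosed}. The paper leaves the proof implicit; your careful verification that a $\kk$-generic point of $V$ remains generic after extending scalars to $\reals$ (via Proposition~\ref{prp:scalar_ext} and the observation that any $\reals$-linear realization of an orbit symmetry of a $\kk$-generator is forced to be $\kk$-rational on $V$) is exactly the detail needed to guarantee $\kk$-coordinates on the vertices.
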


In particular, when $\NKer_{\reals}(G)=\{1\}$,
then $G$ is isomorphic to the affine symmetry group of an 
orbit polytope.
It remains to treat the groups $G$ for which 
$\NKer_{\reals}(G) \neq \{1\}$.
The following theorem gives the complete list of groups $G$ with
non-trivial $\NKer_{\reals}(G)$.
Here, $Q_8$ denotes the quaternion group of order~$8$,
and $C_n$ a cyclic group of order~$n$. 
\begin{prp}[{\cite[Theorem~B]{ladisch16bpre}}]
\label{prp:trivial_sker}
$G$ is a finite group with $\NKer_{\reals}(G) \neq \{1\}$,
if and only if
one of the following holds:
\begin{enumthm}
\item $G$ is abelian, and $G\neq \{1\}$.
\item $G$ is generalized dicyclic.
\item $G$ is isomorphic to $Q_8 \times C_4 \times C_2^r$ for some $r
\geq 0$.
\item $G$ is isomorphic to $Q_8 \times Q_8 \times C_2^r$ for some $r
\geq 0$.
\end{enumthm}
\end{prp}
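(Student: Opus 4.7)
The plan is to translate the condition $\NKer_{\reals}(G)\neq\{1\}$ into a character-theoretic statement and then classify the groups that can satisfy it. By the Frobenius--Schur trichotomy recalled just above, the irreducible characters with $\chi(1)=m_{\reals}(\chi)$ are exactly the linear characters and the $2$-dimensional quaternionic characters (those with $\FS(\chi)=-1$). Therefore $\NKer_{\reals}(G)\neq\{1\}$ is equivalent to the existence of a minimal normal subgroup $M\triangleleft G$ such that every nonlinear $\chi\in\Irr(G)$ which is not $2$-dimensional quaternionic satisfies $M\leq\Ker(\chi)$.

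For the sufficiency direction, I would verify each family directly. For abelian $G$ the defining intersection is empty, so $\NKer_{\reals}(G)=G$ by convention. For $G$ generalized dicyclic with abelian subgroup $A$ of index $2$ and $g$ of order $4$ with $g^{-1}ag=a^{-1}$, every nonlinear irreducible character has the form $\chi_\lambda=\operatorname{Ind}_A^G\lambda$ for a linear character $\lambda$ of $A$ with $\lambda^2\neq 1$; a direct Frobenius--Schur calculation (using $x^2=g^2$ for $x\in G\setminus A$) yields $\FS(\chi_\lambda)=\lambda(g^2)\in\{\pm 1\}$, so the bad characters are exactly those with $\lambda(g^2)=1$, and $\langle g^2\rangle$ lies in the intersection of their kernels. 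For $Q_8\times C_4\times C_2^r$ and $Q_8\times Q_8\times C_2^r$ I would use multiplicativity of $\FS$ on tensor products and inspect the kernel of each bad character; the key computation is that while $\Ker(\psi_1\otimes\psi_2)$ can properly contain $\Ker\psi_1\times\Ker\psi_2$ due to phase cancellation (e.g.\ $(-1,c^2)$ lies in $\Ker(\chi\otimes\mu)$ in $Q_8\times C_4$), this gives a non-trivial common kernel.

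For the necessity direction, assume $G$ is non-abelian with $\NKer_{\reals}(G)\neq\{1\}$ and fix a minimal normal subgroup $M\leq \NKer_{\reals}(G)$. The first task is to show that $M$ is central of order $2$: if $M$ were non-central or of order $\geq 3$, then Clifford-theoretic induction from the stabilizer of a non-trivial character of $M$ would produce an irreducible character faithful on $M$ that is either non-quaternionic $2$-dimensional or of degree $>2$, contradicting the hypothesis. Once $M=\langle z\rangle$ with $z$ central of order $2$, the characters $\chi$ with $z\notin\Ker\chi$ are precisely those appearing in the faithful $z$-isotypic component; by hypothesis all such $\chi$ have degree $2$ with $\FS(\chi)=-1$. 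This is a very restrictive condition on the central extension $G\to G/\langle z\rangle$: roughly, every irreducible projective representation of $G/\langle z\rangle$ with the non-trivial $2$-cocycle class from $M$ must be $2$-dimensional and quaternionic.

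The main obstacle is this last step: extracting the precise four families from the condition that every non-trivial-central-cocycle irreducible projective representation of $\bar G:=G/\langle z\rangle$ is $2$-dimensional and quaternionic. One would analyze the Sylow $2$-subgroup $P$ of $G$ (since the condition forces odd-order parts to act trivially in a strong sense), reduce to the case of $2$-groups, and then appeal to the classification of $2$-groups whose faithful irreducible complex representations are all $2$-dimensional of quaternion type (these are essentially the groups $Q_8$, generalized quaternion, and $Q_8\ast Q_8=Q_8\times Q_8/\Delta$, together with abelian and dicyclic extensions). Combining this with the additional freedom afforded by the abelian part $C_2^r$ (which contributes only real linear characters and hence cannot disturb the condition) would yield exactly the four families listed. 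Because the detailed enumeration is intricate and parallels the classification of groups of ``Frobenius--Schur quaternionic type'', this is where I would invoke (or reprove) the result from~\cite{ladisch16bpre} cited in the statement.
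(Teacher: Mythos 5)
The paper does not prove this proposition: it imports it verbatim as \cite[Theorem~B]{ladisch16bpre}, so there is no internal proof against which to compare. Your reduction is nevertheless sound and matches the mechanism that would underlie any proof: since $m_{\reals}(\chi)\in\{1,2\}$ and equals $2$ exactly when $\FS(\chi)=-1$, the condition $\chi(1)=m_{\reals}(\chi)$ singles out precisely linear characters and $2$-dimensional quaternionic ones, so $\NKer_{\reals}(G)$ is the intersection of kernels of the remaining irreducible characters. Your sufficiency checks for the four families are correct (the Frobenius--Schur computation for induced characters of generalized dicyclic groups, the multiplicativity of $\FS$ on outer tensor products, and the fact that $(-1,c^2)$ resp.\ $(-1,-1)$ lies in the kernel of every $Q_8$-quaternionic-times-complex-linear constituent).

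One small imprecision in the necessity sketch: from $z\in\NKer_{\reals}(G)$ you can only conclude that every $\chi$ not vanishing on $z$ is \emph{linear or} $2$-dimensional quaternionic, not that it is automatically $2$-dimensional quaternionic; you need $z\in G'$ to rule out the linear case (which does hold once you have pinned $z$ down to be a commutator, but should be said). More substantively, as you acknowledge yourself, the actual enumeration --- showing that the condition ``every irreducible projective representation of $G/\langle z\rangle$ with the given nontrivial central cocycle is $2$-dimensional and quaternionic'' forces $G$ into one of the listed families --- is the entire content of the cited theorem, and you leave it to the reference. That is the same thing the paper itself does, so there is nothing to fault; but you should be aware that your proposal is a plausibility sketch of the reduction plus the easy direction rather than a self-contained proof.
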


\begin{lemma} \label{lm:Q8C4C2_closed}
  Let $G = Q_8 \times C_4 \times C_2^r$ for some $r \geq 0$. 
  Then $G$
  is generically closed with respect to some 
  $\reals G$-module.
\end{lemma}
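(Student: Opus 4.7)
The plan is to produce, for $G = Q_8 \times C_4 \times C_2^r$, an explicit real character $\chi$ such that $\Sym(G,\chi) = \lambda(G)$, invoking Theorem~\ref{thm:gen_cls_by_char}. Write $C_4 = \langle c\rangle$, let $\psi \in \Irr Q_8$ denote the faithful $2$-dimensional character, $\lambda \in \Irr C_4$ the character with $\lambda(c) = i$, and set $z = (-1, c^2, 1) \in G$. By the computation underlying Proposition~\ref{prp:trivial_sker}, $\NKer_{\reals}(G) = \{1, z\}$, so Corollary~\ref{co:trivial_sker} does not apply directly. Write $1$ for the trivial character of any factor, and $\widehat{C_2^r}$ for the character group of $C_2^r$.

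Set
\[
    \chi
        = 2(\psi \otimes 1 \otimes 1)
        + (1 \otimes \lambda \otimes 1)
        + (1 \otimes \bar\lambda \otimes 1)
        + \sum_{\nu \in \widehat{C_2^r}}
             \bigl[(\psi \otimes \lambda \otimes \nu)
                   + (\psi \otimes \bar\lambda \otimes \nu)\bigr].
\]
By Lemma~\ref{lm:char_kmod}, $\chi$ is afforded by a cyclic $\reals G$-module: it is real valued, each constituent has multiplicity at most its degree, and each Schur index ($1$ for the $1$-dimensional and complex $2$-dimensional summands, $2$ for $\psi\otimes 1\otimes 1$) divides the corresponding multiplicity. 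One checks that $\Ker \chi = \{1\}$. The ideal part is $\chi_I = 2(\psi \otimes 1 \otimes 1) + (1 \otimes \lambda \otimes 1) + (1 \otimes \bar\lambda \otimes 1)$, and since $\chi - \chi_I$ contains the full list of complex $2$-dimensional irreducibles of $G$, we have $N := \Ker(\chi - \chi_I) = \NKer_\reals(G) = \{1, z\}$.

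Now let $\pi \in \Sym(G, \chi)$. Replacing $\pi$ by $\lambda_{\pi(1)^{-1}} \circ \pi \in \Sym(G,\chi)$ we may assume $\pi(1) = 1$. Condition~(ii) of Theorem~\ref{thm:gen_cls_by_char} then forces $\pi(g) \in gN$ for every $g$, so $\pi(g) = g\,n_g$ with $n_g \in N$ and $n_1 = 1$; bijectivity on each $N$-coset yields $n_g = n_{gz}$, so $n$ descends to a function $G/N \to N$. Decompose $\chi_I = \chi_I^+ + \chi_I^-$ according to whether $z$ acts as $+I$ or $-I$ on each irreducible constituent; this gives the identity $\chi_I(zx) = \chi_I(x) - 2\chi_I^-(x)$. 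Since $N$ is central, $\pi(h)^{-1}\pi(g) = (n_h n_g)\,h^{-1}g$, so condition~(i) rewrites as: whenever $n_h \neq n_g$, one has $\chi_I^-(h^{-1}g) = 0$. Equivalently, $n$ is constant on the right cosets of $T := \langle S\rangle$, where $S := \{x \in G : \chi_I^-(x) \neq 0\}$. Hence if $T = G$ then $n \equiv 1$, forcing $\pi = \id$, so $\Sym(G, \chi) = \lambda(G)$.

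Verifying $T = G$ is the crux of the argument and the reason for the specific choice of $\chi_I$. All three constituents of $\chi_I$ have $z$ acting as $-I$, so $\chi_I^- = \chi_I$, and direct evaluation gives
\[
   \chi_I^-(g, h, k) = 2\psi(g) + 2\Re\lambda(h),
\]
independent of $k$, vanishing exactly when $g \in Q_8 \setminus Z(Q_8)$ and $h \in \{c, c^3\}$. Consequently $S$ contains both subgroups $Q_8 \times \{1\} \times C_2^r$ and $\{1\} \times C_4 \times C_2^r$, which together generate $G$. The subtle point here is that the real-valuedness of $\chi_I$ forces complex linear characters to enter in conjugate pairs, producing the factor $2\Re\lambda(h)$ that vanishes on $\{c, c^3\}$; the summands of $\chi_I$ are chosen so that the two types of vanishing (in $\psi$ and in $2\Re\lambda$) occur on complementary subsets, keeping $S$ large enough to generate $G$.
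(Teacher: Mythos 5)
Your argument is correct and takes essentially the same approach as the paper's: the character $\chi$ you construct agrees with the paper's choice (with its faithful ideal character $\gamma$ of $C_2^r$ specialized to the regular character), the group $N = \Ker(\chi-\chi_I) = \langle z\rangle$ is the same element $u=(-1,c^2,1)$, and both proofs reduce to condition~(i) of Theorem~\ref{thm:gen_cls_by_char} applied to $\chi_I$. The only difference is how the endgame is organized: the paper derives a contradiction by pairing a hypothetical $g$ with $\pi(g)=gz$ against a hand-picked $h=(x,1,1)$, whereas you package the same computation as a connectivity statement, showing the sign function $n\colon G\to N$ is constant on left cosets of $T=\langle S\rangle$ where $S=\{x:\chi_I(x)\neq 0\}$, and observing that $S$ contains the two subgroups $Q_8\times\{1\}\times C_2^r$ and $\{1\}\times C_4\times C_2^r$, which generate $G$; this is a slightly cleaner bookkeeping of the same idea.
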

\begin{proof}
Let $\alpha\in \Irr Q_8$ be the faithful irreducible character 
of degree $2$. 
Let $\beta = \lambda + \con{\lambda}$,
where $\lambda$ is a faithful linear character of 
$C_4 = \langle c  \rangle$.
Finally, let $\gamma$ be a faithful ideal character of $C_2^r$.
Then we claim that 
\[ \chi := \alpha \times \beta \times \gamma 
          + 2 \alpha \times 1 \times 1 
          + 1 \times \beta \times 1
\] 
is afforded by a left ideal of $\reals G$, 
and that $G$ is generically closed with respect to $\chi$.

The irreducible constituents of the first summand have the form
$\tau =\alpha \times \lambda \times \sigma$,
where $\lambda\in \Lin(C_4)$ is faithful and 
$\sigma \in \Lin(C_2^r)$.
We have $\tau \neq \con{\tau}$ and 
$\tau(1)=2 > m_{\reals}(\tau) =1$,
and both $\tau$ and $\con{\tau}$ occur in $\chi$ with multiplicity~$1$.

The other irreducible constituents of $\chi$ occur with 
multiplicity $m_{\reals}(\chi)$.
Thus $\chi$ is afforded by a left ideal of $\reals G$,
and the ideal part of $\chi$ is
\[ \chi_I = 2 \alpha \times 1 \times 1 + 1 \times \beta \times 1\,.
\] 

An easy calculation shows 
$\Ker(\chi-\chi_I) = \Ker(\alpha \times \beta \times \gamma) 
  = \langle u \rangle$ 
with $u = (-1,c^2,1)$. 
We have $\chi_I(u) = -6 = - \chi_I(1)$, 
so $u$ is in the center of $\chi_I$. 
In particular, $\chi_I(gu) = -\chi_I(g)$ for all $g \in G$. 

Let $\pi \in \Sym(G,\chi)$ be any generic symmetry with 
$\pi(1)= 1$. 
By Theorem~\ref{thm:gen_cls_by_char}, 
$\pi$ leaves the left cosets of $\Ker(\chi - \chi_I)$ setwise fixed. 
So if $\pi$ is not the identity,
then there is an element $g \in G$ with $\pi(g) = gu$.
Then, again by Theorem~\ref{thm:gen_cls_by_char}, we have
$\chi_I(g) = \chi_I(\pi(g)) = -\chi_I(g)$, 
so $\chi_I(g) = 0$ 
which means $g = (x,y,z)$ with 
$x \in Q_8 \setminus \Zent(Q_8)$ and $y^2 \neq 1$. 
Let $h = (x,1,1)$. 
Then $0 \neq \chi_I(h) = 2$, 
so $\pi(h) = h$, and $0 \neq \chi_I(h^{-1}g) = 2$. 
But then
$\chi_I(\pi(h)^{-1} \pi(g)) = \chi_I(h^{-1} u g) = -\chi_I(h^{-1}g)$ 
is a contradiction to Theorem~\ref{thm:gen_cls_by_char},
which shows that $\pi$ must be the identity. 
Hence, $G$ is generically closed with respect to $\chi$.
\end{proof}

\begin{lemma} \label{lm:Q8Q8C2_closed}
Let $G = Q_8 \times Q_8 \times C_2^r$ for some $r \geq 0$. Then $G$
is generically closed with respect to some $\reals G$-module.
\end{lemma}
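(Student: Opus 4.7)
The plan is to mirror the strategy of Lemma~\ref{lm:Q8C4C2_closed}, producing a suitable cyclic $\reals G$-module and applying Theorem~\ref{thm:gen_cls_by_char}. Let $\alpha \in \Irr(Q_8)$ denote the faithful irreducible character of degree $2$, and let $\gamma$ be a faithful ideal character of $C_2^r$ (say, the regular character, or the trivial character if $r=0$). I would consider
\[ \chi := \alpha \times \alpha \times \gamma
           + 2\,\alpha \times 1 \times 1
           + 1 \times 2\alpha \times 1\,.
\]
The key representation-theoretic observation driving everything is that although $m_{\reals}(\alpha) = 2$ (since $\FS(\alpha) = -1$), the external tensor product $\alpha \times \alpha$ satisfies $\FS(\alpha \times \alpha) = (-1)(-1) = 1$, so $m_{\reals}(\alpha \times \alpha \times \sigma) = 1$ for every $\sigma \in \Lin(C_2^r)$. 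Together with the fact that $\alpha \times 1$ and $1 \times \alpha$ each appear in $\chi$ with multiplicity $2 = m_{\reals}$, Lemma~\ref{lm:char_kmod} implies $\chi$ is afforded by a left ideal of $\reals G$; the cyclicity check $\langle \chi, \psi \rangle \leq \psi(1)$ is immediate from $\psi(1) \in \{2,4\}$ for the constituents involved.

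Next, I would read off the ideal part as $\chi_I = 2\,\alpha \times 1 \times 1 + 1 \times 2\alpha \times 1$, so $\chi - \chi_I = \alpha \times \alpha \times \gamma$. Using the faithfulness of $\gamma$ and the fact that an element $(x,y,z)$ lies in $\Ker(\alpha \times \alpha \times \sigma)$ only when $x,y \in \Zent(Q_8)$ and $(\alpha(x)/2)(\alpha(y)/2) = \sigma(z)$, a short calculation shows $\Ker(\chi - \chi_I) = \langle u \rangle$ with $u = (-1,-1,1)$. Since $u$ is central and $\alpha(-1) = -\alpha(1)$, we have the crucial identity $\chi_I(gu) = -\chi_I(g)$ for all $g \in G$.

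Finally, to establish generic closedness I would apply Theorem~\ref{thm:gen_cls_by_char} and show that any $\pi \in \Sym(G,\chi)$ with $\pi(1) = 1$ is the identity. Condition~(ii) of that theorem forces $\pi(g) \in \{g, gu\}$ for every $g$. If $\chi_I(g) \neq 0$, then condition~(i) with $h = 1$ combined with $\chi_I(gu) = -\chi_I(g)$ immediately forces $\pi(g) = g$. The main obstacle is to rule out $\pi(g) = gu$ on the vanishing locus of $\chi_I$, which by direct inspection consists of $g = (x,y,z)$ with either $(x,y) \in \{(1,-1),(-1,1)\}$ or $x, y \in Q_8 \setminus \Zent(Q_8)$. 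In each case I would select an auxiliary $h$ with $\chi_I(h) \neq 0$ (whence $\pi(h) = h$) and $\chi_I(h^{-1}g) \neq 0$, then use condition~(i) together with $\chi_I(h^{-1}\,gu) = -\chi_I(h^{-1}g)$ to conclude $\pi(g) = g$. Concretely, for $g = (x,y,z)$ with $x,y \notin \Zent(Q_8)$ one takes $h = (x,1,z)$, giving $\chi_I(h) = \chi_I(h^{-1}g) = 4$ but $\chi_I(h^{-1}(gu)) = -4$; the case $(x,y) \in \{(1,-1),(-1,1)\}$ is handled analogously, e.g.\ with $h = (1,j,1)$ for a non-central $j \in Q_8$. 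This exhausts all cases and completes the argument.
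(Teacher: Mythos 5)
Your proof is correct and follows essentially the same route as the paper: same character $\chi$, same ideal part $\chi_I$, same kernel computation $\Ker(\chi-\chi_I)=\langle u\rangle$ with $u=(-1,-1,1)$, and the same final argument using Theorem~\ref{thm:gen_cls_by_char} with an auxiliary $h$ on which $\chi_I$ does not vanish. The only differences are cosmetic: you spell out the Frobenius--Schur calculation showing $m_{\reals}(\alpha\times\alpha\times\sigma)=1$ (where the paper just references the previous lemma), and you make slightly different but equally valid choices of $h$ in the case analysis.
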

\begin{proof}
Let $\alpha \in \Irr(Q_8)$ be as in the proof of
Lemma~\ref{lm:Q8C4C2_closed},
and $\gamma$ a faithful character of $C_2^r$,
also as above. 
We claim that the character
\[ \chi = \alpha \times \alpha \times \gamma 
          + 2 \alpha \times 1 \times 1 
          + 1 \times 2\alpha \times 1
\] 
is afforded by a left ideal of $\reals G$, 
and that $G$ is generically closed with respect to $\chi$.

The proof follows the same lines as in Lemma~\ref{lm:Q8C4C2_closed}.
For the same reasons as above, 
$\chi$ is afforded by a left ideal of $\reals G$. 
Its ideal component is given by 
\[ \chi_I = 2 \alpha \times 1 \times 1 
           + 1 \times 2 \alpha \times 1\,,
\] 
and we have
$\Ker(\chi - \chi_I) = \Ker(\alpha \times \alpha \times \gamma) 
                     = \langle u \rangle$
with $u = (-1,-1,1)$. 
We have $\chi_I(u)
= -8 = -\chi_I(1)$, so $u$ is in the center of $\chi_I$. 
Let $\pi\in \Sym(G,\chi)$ with $\pi(1) = 1$. 
If $\pi$ is not the identity
then there is an element $g \in G$ with $\pi(g) = g u$, 
so again by Theorem~\ref{thm:gen_cls_by_char}, 
$\chi_I(g) = 0$. 
Therefore, $g = (x,y,z)$ with 
$(x,y) \in \{ (1,-1), (-1,1) \}$ or 
$x$, $y \in Q_8 \setminus \Zent(Q_8)$. 
In the first case, 
set $h = (a,1,1)$, where $a$ is any element in 
$Q_8 \setminus \Zent(Q_8)$; 
in the second case, set $h = (x,1,1)$. 
In both cases, we have $\chi_I(h) \neq 0$, so $\pi(h) = h$, 
and $\chi_I(h^{-1}g) \neq 0$. 
Again, as in the proof above, we obtain a
contradiction to Theorem~\ref{thm:gen_cls_by_char} by
$\chi_I(\pi(h)^{-1} \pi(g)) = \chi_I(h^{-1} u g) = -
\chi_I(h^{-1}g)$. 
So $\pi$ must be the identity, and $G$ is
generically closed with respect to $\chi$.
\end{proof}

The following is Theorem~\ref{ithm:class_aff} 
from the introduction:

\begin{thm} \label{thm:class_aff_sym_grps}
Let $G$ be a finite group.
Then $G$ is isomorphic to the affine symmetry group
of an orbit polytope, 
if and only if none of the following holds:
\begin{enumthm}
\item $G$ is abelian of exponent greater than $2$.
\item $G$ is generalized dicyclic.
\item $G$ is elementary abelian of order $4, 8$ or $16$.
\end{enumthm}
\end{thm}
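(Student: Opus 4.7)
By Lemma~\ref{lm:affsym_genclosed}, the task reduces to determining which finite groups $G$ are generically closed with respect to some cyclic $\reals G$-module. For the non-realization direction, cases (i) and (ii) of the exception list are immediate from Corollary~\ref{co:babaicor}. For case (iii), let $G = C_2^r$ with $r \in \{2,3,4\}$. Every irreducible $\reals$-character of $G$ is linear with Schur index $1$, so by Lemma~\ref{lm:char_kmod} a character of a cyclic $\reals G$-module has the form $\chi_S = \sum_{\psi\in S}\psi$ for some $S\subseteq \hat{G}$, and satisfies $\chi_S = (\chi_S)_I$. Theorem~\ref{thm:gen_cls_by_char} then identifies $\Sym(G,\chi_S)$ with the permutations $\pi$ satisfying $\chi_S(\pi(g)+\pi(h)) = \chi_S(g+h)$ for all $g,h\in G$. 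The Hadamard map $\varphi\colon g\mapsto (\psi(g))_{\psi\in S}$ embeds $G$ into $\{\pm 1\}^S$, and such $\pi$ correspond to orthogonal transformations of $\reals^S$ permuting the image $\varphi(G)$. I would then enumerate the finitely many spanning subsets $S \subseteq \hat{G}$ and verify that the resulting $2^r$-point configuration in $\{\pm 1\}^S$ always admits an orthogonal symmetry outside the diagonal subgroup $\varphi(G) \leq B_{|S|}$, forcing $\lambda(G) \subsetneq \Sym(G,\chi_S)$.

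For the realization direction, by Lemma~\ref{lm:affsym_genclosed} it suffices to produce, for each $G$ outside the exception list, a cyclic $\reals G$-module witnessing generic closure. If $\NKer_{\reals}(G) = \{1\}$, Corollary~\ref{co:trivial_sker} gives this directly. Otherwise, Proposition~\ref{prp:trivial_sker} restricts $G$ to four families: abelian, generalized dicyclic, $Q_8\times C_4\times C_2^r$, or $Q_8\times Q_8\times C_2^r$. Generalized dicyclic groups are in the exception list, and the two quaternionic families are handled by Lemmas~\ref{lm:Q8C4C2_closed} and~\ref{lm:Q8Q8C2_closed}. Among non-excluded abelian groups only $C_2^r$ with $r\in\{0,1\}$ (trivial, realized by a point and a segment) and $r\geq 5$ remain. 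For $r\geq 5$ I would construct an explicit character $\chi_S$ for a carefully chosen $S\subseteq \hat{G}$, and verify via Theorem~\ref{thm:gen_cls_by_char} that only left translations preserve $(g,h)\mapsto \chi_S(g+h)$; the existence of such $S$ parallels the existence of a graphical regular representation of $C_2^r$ in this range.

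The main obstacle is the elementary abelian case. Both directions demand combinatorial input on $C_2^r$ that is not purely representation-theoretic: a finite but intricate enumeration of spanning subsets of $\hat{G}$ for $r\leq 4$, and the construction of a rigid explicit character for $r\geq 5$. All other cases follow cleanly from the character-theoretic machinery of Sections~\ref{sec:gen_points}--\ref{sec:characters} combined with the two preceding lemmas.
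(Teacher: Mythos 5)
Your overall structure matches the paper's: Lemma~\ref{lm:affsym_genclosed} reduces realizability to generic closure, Corollary~\ref{co:babaicor} rules out cases (i) and (ii), Proposition~\ref{prp:trivial_sker} and Corollary~\ref{co:trivial_sker} dispose of all groups with $\NKer_{\reals}(G) = \{1\}$, and Lemmas~\ref{lm:Q8C4C2_closed} and~\ref{lm:Q8Q8C2_closed} handle the two quaternionic families. Your observation that $\chi_S = (\chi_S)_I$ for any character of $C_2^r$, so that condition (ii) of Theorem~\ref{thm:gen_cls_by_char} becomes vacuous and $\Sym(G,\chi_S)$ is exactly the stabilizer of the map $(g,h)\mapsto\chi_S(g+h)$, is also correct and is the right way to frame the elementary abelian problem.

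The gap is precisely where you flag it: the elementary abelian case is left as a plan, not a proof. You propose to enumerate spanning subsets $S\subseteq\widehat{G}$ for $r\in\{2,3,4\}$ and show each always admits a non-translation symmetry, and conversely to construct a rigid $S$ for each $r\geq 5$. Neither of these is carried out, and neither is routine. The paper does not prove this either; it invokes the authors' earlier result \cite[Theorem~9.9]{FrieseLadisch16}, which already establishes that $C_2^r$ is realizable as an affine symmetry group of an orbit polytope if and only if $r\notin\{2,3,4\}$ (and indeed with integer coordinates). Your reformulation via the ``Hadamard'' embedding $g\mapsto(\psi(g))_{\psi\in S}$ into $\{\pm 1\}^S$ is the natural combinatorial translation of that earlier theorem, but it replaces a citation with a substantial combinatorial project about $\pm 1$ point configurations and their orthogonal symmetry groups. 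The analogy you draw with graphical regular representations is suggestive but not a proof: the GRR machinery produces graph automorphism groups, not linear symmetry groups of orbits of a representation, and you would still have to bridge that gap explicitly. As it stands, your argument needs the content of \cite[Theorem~9.9]{FrieseLadisch16} (or an equivalent worked-out argument) to be complete.

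One smaller remark: when you say ``only left translations preserve $(g,h)\mapsto\chi_S(g+h)$'', note that, by Corollary~\ref{co:gen_sym_irred} and Theorem~\ref{thm:gen_cls_by_char}, $\Sym(G,\chi_S)$ always contains the $\pi$ with $\pi(1)=1$ that preserve this function; for an abelian group the inversion $g\mapsto g^{-1}$ is one such map, but for elementary abelian $2$-groups it coincides with the identity. The real obstruction in small rank is that the configuration $\varphi(G)\subseteq\{\pm 1\}^S$ is forced to have extra orthogonal symmetries, and that is what you would need to verify or refute rank by rank.
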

\begin{proof}
It follows from Corollary~\ref{co:babaicor} that groups
which are abelian, but not elementary $2$-abelian, 
are not isomorphic to the affine symmetry group of an orbit polytope,
and the same holds for generalized dicyclic groups.
The case of elementary abelian $2$-groups was dealt with in
our previous paper~\cite[Theorem~9.9]{FrieseLadisch16}:
An elementary abelian $2$-group $G$ is isomorphic to the affine 
symmetry group of an orbit polytope if and only if its order,
$\card{G}$, is not $4$, $8$ or $16$.
The groups 
$Q_8 \times C_4 \times C_2^r$ and 
$Q_8 \times Q_8 \times C_2^r$ 
are affine symmetry groups of orbit polytopes for any 
$r \geq 0$ 
by Lemma~$\ref{lm:Q8C4C2_closed}$ 
and Lemma~\ref{lm:Q8Q8C2_closed}. 
All remaining groups are affine symmetry
groups of orbit polytopes by 
Proposition~\ref{prp:trivial_sker} and
Corollary~\ref{co:trivial_sker}.
\end{proof}

We end this section with a related question.
It is not difficult to show that when $G$ is a nonabelian
group, then the intersection of the kernels of
all \emph{nonlinear} irreducible characters is the trivial 
subgroup \cite[Lemma~3.1]{ladisch16bpre}.
Thus when $V$ is a $\compl G$-module affording the character
$ \gamma = \sum_{\chi} \chi$, 
where the sum runs over the nonlinear irreducible 
characters of $G$,
then $\Sym(G,V)\iso G$.

\begin{ques}
  Which finite abelian groups $G$ are generically closed
  with respect to some representation
  $G\to \GL(d,\compl)$?
  Equivalently, which finite abelian groups are isomorphic
  to the linear symmetry group of some 
  point set in $\compl^d$ for some $d$,
  and act transitively on this set?
\end{ques}

We conjecture that there are only finitely many abelian groups
(up to isomorphism)
that are not generically closed with respect to at least one
representation. 
By Proposition~\ref{prp:abs_simp_gen_sym},
every cyclic group is generically closed with respect
to some faithful linear representation.
Our earlier result~\cite[Theorem~9.9]{FrieseLadisch16},
together with Proposition~\ref{prp:scalar_ext},
answers the above question for elementary abelian $2$-groups.
In particular, the elementary abelian
$2$-groups of orders $4$, $8$ and $16$ are not generically closed
with respect to some representation.
One can check that the elementary abelian $3$-group
$C_3\times C_3$
of order~$9$ is also not generically closed with respect
to any representation. We conjecture that these four groups
are the only such groups.

\section{Classification of affine symmetry groups
        \texorpdfstring{\\}{}
         of rational orbit polytopes}
\label{sec:class_q}
In this section, we classify affine symmetry groups
of polytopes with rational coordinates.
Since every polytope with rational coordinates can be scaled
to a polytope with integer coordinates, this classifies also
affine symmetry groups of lattice orbit polytopes.

By Corollary~\ref{co:trivial_sker},
it follows that when $\NKer_{\QQ}(G) = \{1\}$, then
$G$ is isomorphic to the affine symmetry group of 
an orbit polytope with integer coordinates.
The main result of this section depends on the
classification of the finite groups $G$
with $\NKer_{\QQ}(G)\neq \{1\}$ 
\cite[Theorem~D]{ladisch16bpre}.

The next lemma can often be used to show
 that a certain group is
\emph{not} the affine symmetry group of a rational orbit polytope.
As a consequence of the classification
of the finite groups $G$
with $\NKer_{\QQ}(G)\neq 1$ \cite[Theorem~D]{ladisch16bpre}, 
it turns out that most of these groups satisfy 
the assumptions of the next lemma.

\begin{lemma}\label{lm:concr_aut_gensym}
  Suppose $G$ has a normal subgroup $N$ of prime index
  $\card{G:N}= p$ and an element $z$ of order $p$,
  such that $z\in \erz{g}$ for every $g\in G\setminus N$.
  Fix an epimorphism 
  $\kappa\colon G\to \erz{z}$ with kernel $N$
  and define $\alpha\colon G\to G$ by
  $\alpha(g) = g \kappa(g)$.
  Then $\alpha\in \Sym(G,I)$ for every
  ideal $I$ of $\QQ G$.
  If additionally $z\in \NKer_{\QQ}(G)$,
  then $G$ can not be isomorphic to the affine symmetry group
  of an orbit polytope with rational coordinates.
\end{lemma}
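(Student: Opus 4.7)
The plan is to prove Part~1 first (the statement $\alpha \in \Sym(G,I)$ for every ideal $I$ of $\QQ G$) by reducing to a character identity and invoking Galois theory, and then to deduce Part~2 from Theorem~\ref{thm:gen_cls_by_char}.

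The starting point is that the hypothesis forces $z$ to be central: since $z \in \erz{g}$ for every $g \in G\setminus N$, each such $g$ commutes with $z$, so $\card{\Cent_G(z)} \geq \card{G\setminus N} = (1 - 1/p)\card{G}$, and together with $z \in \Cent_G(z)$ this forces $\Cent_G(z) = G$. Consequently $\kappa(g) \in \erz{z} \leq \Zent(G)$ for every $g$, and a direct calculation using the homomorphism property of $\kappa$ gives
\[
   \alpha(g)^{-1}\alpha(h)
   = \kappa(g)^{-1}g^{-1}h\kappa(h)
   = g^{-1}h\cdot\kappa(g^{-1}h)
   = \alpha(g^{-1}h).
\]
By Proposition~\ref{prp:sym_of_ideal}, verifying $\alpha \in \Sym(G, I)$ for $I$ affording the ideal character $\chi_I$ thus reduces to showing $\chi_I(\alpha(x)) = \chi_I(x)$ for every $x \in G$.

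For $x \in N$ this is trivial since $\alpha(x)=x$. For $x \notin N$, write $\kappa(x) = z^{e(x)}$ with $e(x) \in \{1,\dots,p-1\}$, and use the hypothesis $z \in \erz{x}$ to put $z = x^{(n/p)t}$, where $n$ is the order of $x$ and $t$ is coprime to $p$. Then $\alpha(x) = xz^{e(x)} = x^r$ with $r := 1 + (n/p)te(x)$. The main technical step is the coprimality $\gcd(r,n) = 1$. For any prime $q\mid n$ with $q\neq p$ one has $q \mid n/p$, hence $q \mid r-1$ and $q \nmid r$. For the prime $p$ one uses the structural fact $p^2 \mid n$: since $x \notin N$, the image of $\erz{x}$ in $G/N \cong C_p$ is nontrivial, so $\erz{x} \cap N$ is the unique index-$p$ subgroup of $\erz{x}$, of order $n/p$; because $z \in \erz{x}\cap N$ has order $p$, $p$ must divide $n/p$. (This exploits $z\in N$, the scenario relevant to the intended applications.) Once coprimality is established, the Galois automorphism $\sigma_r$ of $\QQ(\zeta_n)$ sending $\zeta_n \mapsto \zeta_n^r$ satisfies $\chi_I(x^r) = \sigma_r(\chi_I(x))$; since $\chi_I$ takes rational values, $\sigma_r$ fixes them, so $\chi_I(\alpha(x)) = \chi_I(x^r) = \chi_I(x)$. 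This finishes Part~1.

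For Part~2, assume toward contradiction that $G$ is the affine symmetry group of some orbit polytope $P(G,v)\subseteq \QQ^d$. A rational analog of Lemma~\ref{lm:affsym_genclosed}, obtained by applying Corollary~\ref{co:closure} over $\QQ$, shows $G$ is generically closed with respect to the cyclic $\QQ G$-module $V = \QQ G v$. Its character $\chi_V$ is afforded by a left ideal of $\QQ G$, and by Lemma~\ref{lm:char_kmod} any $\psi \in \Irr G$ with $0 < \langle \chi_V, \psi\rangle < \psi(1)$ satisfies $\psi(1) > m_\QQ(\psi)$. Hence $\NKer_\QQ(G) \subseteq \Ker(\chi_V - (\chi_V)_I) =: K$, so the assumption $z \in \NKer_\QQ(G)$ yields $z\in K$. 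Applying Theorem~\ref{thm:gen_cls_by_char}: condition~(i) holds by Part~1 applied to the ideal affording $(\chi_V)_I$, and condition~(ii) $\alpha(gK) = \alpha(1)gK$ holds because $\kappa(g)\in\erz{z}\subseteq K$. Therefore $\alpha \in \Sym(G,V) = \lambda(G)$. Since $\alpha(1)=1$, this would force $\alpha = \lambda_1 = \id$, but $\alpha(x) = xz^{e(x)} \neq x$ for any $x\notin N$, a contradiction. The principal obstacle throughout is the coprimality claim $\gcd(r,n)=1$, resting on the structural observation $p^2\mid n$ for $x\notin N$; all other steps are essentially formal manipulations with central idempotents and characters.
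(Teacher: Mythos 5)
Your proof is correct and, for the first part, takes a genuinely different route from the paper's. The paper reduces to simple twosided ideals, writes the character as $\gamma = \chi(1)\sum_\sigma\chi^\sigma$, and shows that $\gamma$ vanishes identically on $G\setminus N$ by a cyclotomic-sum argument (the Galois orbit of each eigenvalue $\lambda(g)$, being the set of primitive $k$-th roots of unity with $p^2\mid k$, is a union of $\langle\zeta\rangle$-cosets and hence sums to zero). You instead avoid any decomposition and any vanishing claim: you observe that $\alpha$ is a power map on each cyclic subgroup, $\alpha(x)=x^r$ with $\gcd(r,\operatorname{ord}(x))=1$, so that $\gamma(\alpha(x))=\sigma_r(\gamma(x))=\gamma(x)$ is immediate from the rationality of $\gamma$. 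Both arguments hinge on the same structural fact $p^2\mid\operatorname{ord}(x)$ for $x\notin N$; your version makes that dependence transparent and the conclusion follows in one line, whereas the paper's argument proves the stronger fact that $\gamma$ is supported on $N$. Your treatment of the second part matches the paper's (pass through Theorem~\ref{thm:gen_cls_by_char}, verify both conditions, derive $\alpha\in\lambda(G)$ and contradict $\alpha\ne\id$).

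One point worth flagging: you correctly noticed that the lemma as stated omits the hypothesis $z\in N$, and that your coprimality argument (the $p$-part $p^2\mid n$) genuinely relies on it. In fact the paper's own proof also needs it: the asserted inverse $g\mapsto g\kappa(g)^{-1}$ of $\alpha$ is an inverse only when $\kappa\circ\kappa$ is trivial, i.e.\ $\langle z\rangle\subseteq N$; otherwise $\alpha$ need not even be a bijection (take $G=C_p$, $N=1$, $\kappa(z)=z^{-1}$). So this is a small but real gap in the lemma's statement, implicitly closed in all the paper's applications, and you were right to point it out rather than paper over it.
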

\begin{proof}
  First notice that $z\in \Zent(G)$, 
  since $G\setminus N$ centralizes $z$ by assumption.
  This yields that $\alpha$ is a group automorphism
  of $G$, with inverse $g\mapsto g \kappa(g)^{-1}$.

  By Lemma~\ref{lm:gen_sym_sum}, it suffices to assume
  that $I$ is a simple ideal.
  Then the character of $I$
  has the form
  $\gamma = \chi(1) \sum_{\sigma} \chi^{\sigma}$
  for some $\chi\in \Irr(G)$,
  where $\sigma$ runs over the Galois group
  of $\QQ(\chi)/\QQ$.
  If $z\in  \Ker(\chi)$, then $z\in \Ker(\gamma)$ 
  and the result is clear
  by the remarks before Lemma~\ref{lm:gen_sym_sum},
  or by Proposition~\ref{prp:sym_of_ideal}.
  
  So assume that $z\notin \Ker(\chi)$.
  Since $z\in \Zent(G)$,
  we have that $\chi(z) = \chi(1)\zeta$ for some
  primitive $p$-th root of unity,~$\zeta$.
  Let $g\in G\setminus N$ be arbitrary.
  The restriction $\gamma_{|\erz{g}}$ 
  decomposes into a sum of Galois orbits of linear 
  characters.
  Since $\chi(z) = \chi(1)\zeta $,
  we have $\lambda(z) = \zeta\neq 1$ for each 
  linear constituent $\lambda$ of $\gamma$.
  Since $\card{G/N} = \card{\erz{z}} = p$ 
  and $z\in \erz{g}$, we see that 
  $\lambda(g)$ is a primitive $k$-th root of unity where
  $p^2$ divides $k$.

  The Galois orbit of $\lambda(g)$ consists of all
  primitive $k$-th roots of unity.
  Since $p^2$ divides $k$, 
  the Galois orbit of $\lambda(g)$ is a union of
  cosets of $\erz{\zeta}$, and so the sum over the Galois orbit 
  is zero.
  It follows that $\gamma(g) = 0$.
  As $g\in G\setminus N$ was arbitrary, it follows that
  $\gamma_{G\setminus N} \equiv 0$.
  Since $\alpha$ is a group automorphism 
  leaving each element of $N$ fixed,
  we have that
  \[ 
     \gamma(\alpha(g)^{-1}\alpha(h)) 
      = \gamma( \alpha(g^{-1}h)) 
      = \gamma( g^{-1}h )      
  \]
  for all $g$, $ h\in G$.
  By Proposition~\ref{prp:sym_of_ideal}, this shows
  that $\alpha\in \Sym(G,\gamma)=\Sym(G,I)$.
  
  If $z\in \NKer_{\QQ}(G)$, then 
  $z\in \Ker(\gamma -\gamma_{I})$ for every character $\gamma$
  of a cyclic $\QQ G$-module,
  where $\gamma_I$ denotes the ideal part of $\gamma$,
  as before.
  It follows from Theorem~\ref{thm:gen_cls_by_char}
  that $\alpha \in \Sym(G,\gamma)$ for such $\gamma$.
  Since $\alpha(1_G) = 1_G$, but $\alpha \neq \id_G$,
  we have $\card{\Sym(G,\gamma)} > \card{ G }$.
\end{proof}

\begin{thm}\label{thm:class_int_polyt}
  The finite group $G$ is the affine symmetry of 
  an orbit polytope with vertices 
  with integer (rational) coordinates
  if and only if none of the following holds:
    \begin{enumthm}
    \item $G$ is abelian and either 
          $G$ has exponent greater than $2$,
          or $G$ is elementary abelian
          of order $4$, $8$ or $16$.
    \item \label{it:m_nilp}
          $G = S \times A$,
          where $S$ is a generalized dicyclic group of exponent $4$,
          the group $A$ is abelian of odd order,
          and the multiplicative order of\/
          $2$ modulo $\card{A}$ is odd.
    \item \label{it:m_gendic}
          $G$ is generalized dicyclic.
    \item \label{it:m_allab}
          $G = (PQ) \times B$,
          where the subgroups
          $P\in \Syl_p(G)$, $Q\in \Syl_q(G)$ and
          $B$ are abelian, 
          $P = \erz{g, \Cent_P(Q)} $
          and  there is
          some integer $k$ such that, 
          $x^g = x^k$ for all $x\in Q$.
          If $p^c = \card{P/\Cent_P(Q)}$,
          then $p^d = \ord( g^{p^c} )$ is the exponent 
          of $\Cent_P(Q)$,
          and $(q-1)_p$, the $p$-part of $q-1$,
            divides $p^d$.
          Finally, the $p$-part of the multiplicative order
          of $q$ modulo $\card{B}$ divides the multiplicative
          order of $q$ modulo $p^d$.
    \item \label{it:m_dirprod_h}
          $G = Q_8 \times (C_2)^r \times H$,
          where $H$ is as in \ref{it:m_allab} and has odd order,
          and the multiplicative order of\/
          $2$ modulo $\card{H}$ is odd.
    \end{enumthm}  
\end{thm}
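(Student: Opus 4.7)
The plan is to mirror the strategy used for Theorem \ref{thm:class_aff_sym_grps} but work over $\QQ$ instead of $\reals$. Corollary \ref{co:trivial_sker} already handles the easy direction: every finite group $G$ with $\NKer_{\QQ}(G) = \{1\}$ is the affine symmetry group of an orbit polytope with rational (even integer) coordinates. So the whole argument reduces to analyzing those $G$ with $\NKer_{\QQ}(G) \neq \{1\}$, for which I will use the complete classification from \cite[Theorem~D]{ladisch16bpre}. The proof then splits into a negative half (groups in (i)--(v) cannot occur) and a positive half (every group in the $\NKer_{\QQ}$-list but outside (i)--(v) is realizable).

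For the negative half, cases (i) and (iii) are inherited from Theorem \ref{thm:class_aff_sym_grps}/Corollary \ref{co:babaicor}, since those obstructions already exist over $\reals$. For (ii), (iv), (v) the main tool is Lemma \ref{lm:concr_aut_gensym}: in each family I will exhibit a prime $p$, a normal subgroup $N$ of index $p$, and a central element $z$ of order $p$ contained in $\NKer_{\QQ}(G)$ and in every cyclic subgroup generated by an element of $G\setminus N$. In (ii) the obvious candidate is $p=2$, with $N$ the unique index-two abelian subgroup of $S\times A$ and $z$ the central involution of $S$; the odd-order and odd-order-of-$2\bmod\card{A}$ assumptions are precisely what forces $z\in\NKer_{\QQ}(G)$, since they make the nontrivial $\QQ$-irreducible summands of the Galois-closure of $1\otimes\widehat{A}$ all have Schur index equal to their degree over $\QQ$. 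Cases (iv) and (v) are handled the same way, using $p$ (respectively the prime $2$) together with the element of order $p$ in $\erz{g}\cap\Cent_P(Q)$ (respectively in $Q_8$); the arithmetic conditions on $(q-1)_p$, on the order of $q$ modulo $\card{B}$, and on the order of $2$ modulo $\card{H}$, are exactly what one needs to verify $z\in\NKer_{\QQ}(G)$ via Lemma \ref{lm:char_kmod}.

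For the positive half, I will take each remaining family from \cite[Theorem~D]{ladisch16bpre} and construct an explicit cyclic $\QQ G$-module with respect to which $G$ is generically closed, in the spirit of Lemmas \ref{lm:Q8C4C2_closed} and \ref{lm:Q8Q8C2_closed}. The recipe is the same: take $\chi = \alpha\times\beta\times\gamma + \chi_I$, where $\alpha$, $\beta$, $\gamma$ are faithful irreducible (or ideal) characters of the direct factors chosen so that (a)~the Schur index condition in Lemma \ref{lm:char_kmod} is satisfied (so the resulting module is defined over $\QQ$), (b)~$\chi_I$ is a large enough ideal character to pin down the generic symmetry combinatorially via Theorem \ref{thm:gen_cls_by_char}, and (c)~$\Ker(\chi-\chi_I)$ is sufficiently small that the only permutation $\pi$ with $\pi(1)=1$ compatible with both conditions of Theorem \ref{thm:gen_cls_by_char} is the identity. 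The failure of the odd-order / Schur-index hypothesis from (ii), (iv), (v) is precisely what enables step (a) here, so the positive and negative halves match case-by-case.

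The main obstacle will be the bookkeeping needed to match the (potentially lengthy) list of families in \cite[Theorem~D]{ladisch16bpre} with the five items (i)--(v), and then, within the negative half, to verify in families (iv) and (v) that the subtle number-theoretic conditions on multiplicative orders of $q$ and $2$ really do force $z\in\NKer_{\QQ}(G)$. On the positive side, the delicate point is constructing $\chi$ of the right form in the $PQ$-type groups: a direct product decomposition is no longer available, so $\alpha$ must be built from an induced-from-$Q$ character, and one has to check that its $\QQ$-Schur index is $1$ (not just that it is $\reals$-rational), which is where the failure of the arithmetic condition from (iv) is crucial.
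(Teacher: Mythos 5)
Your overall strategy matches the paper's proof closely: use Corollary~\ref{co:trivial_sker} to dispose of all groups with $\NKer_{\QQ}(G)=\{1\}$, invoke the classification of groups with $\NKer_{\QQ}(G)\neq\{1\}$ from \cite[Theorem~D]{ladisch16bpre}, apply Lemma~\ref{lm:concr_aut_gensym} with an appropriate choice of $N$ and $z$ in cases \ref{it:m_nilp}, \ref{it:m_allab}, \ref{it:m_dirprod_h}, inherit cases (i) and \ref{it:m_gendic} from the real case, and construct explicit cyclic $\QQ G$-modules for the exceptional realizable families à la Lemmas~\ref{lm:Q8C4C2_closed} and~\ref{lm:Q8Q8C2_closed}. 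The choices of $z$ in (ii) and (iv) are also the ones the paper makes (in (iv), $z=g^{p^{c+d-1}}$ lies in $\erz{g}\cap\Cent_P(Q)$ and has order $p$).

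The one genuine misstep is in your last paragraph, concerning the positive half. The only groups on the \cite[Theorem~D]{ladisch16bpre} list that are realizable over $\QQ$ are the two \emph{direct product} families $Q_8\times C_4\times(C_2)^r\times A$ and $Q_8\times Q_8\times(C_2)^r\times A$ (with $A$ abelian of odd order and the order of $2$ modulo $\card{A}$ odd). There is no $PQ$-type family in the positive half at all: every group fitting the description of \ref{it:m_allab} that appears on the list is excluded, and a group of the same general shape that does \emph{not} appear on the list already has $\NKer_{\QQ}(G)=\{1\}$, so Corollary~\ref{co:trivial_sker} applies directly with no explicit construction needed. Consequently the ``delicate point'' you flag --- building $\alpha$ from an induced-from-$Q$ character and checking its $\QQ$-Schur index --- never arises; the constructions for the positive half really are just the proofs of Lemmas~\ref{lm:Q8C4C2_closed} and~\ref{lm:Q8Q8C2_closed} with $\gamma$ replaced by a faithful ideal character of $(C_2)^r\times A$ with rational values. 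Also, a minor point: in case \ref{it:m_nilp} the index-two abelian subgroup of $S$ is in general not unique (e.g.\ $Q_8$ has three), but any choice works since every $h\in S\setminus B$ squares to $g^2$; the argument is unaffected.
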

\begin{proof}
  When $G$ is not the affine symmetry group of an orbit polytope
  with lattice points as vertices, then
  $\NKer_{\QQ}(G)\neq 1$.
  The list of such groups consists of the groups in 
  the above list, and the following groups:
  \begin{enumerate}[label= (\alph*)]
  \item $G = Q_8 \times C_4 \times (C_2)^r \times A$,
  \item $G = Q_8 \times Q_8 \times (C_2)^r \times A$,
  \end{enumerate}
  where in each case $A$ is abelian of odd order,
  and the multiplicative order of~$2$ modulo $\card{A}$ is 
  odd~\cite[Theorem~D]{ladisch16bpre}.
  However, these groups can be realized as symmetry groups 
  of integer orbit polytopes. 
  This can be shown as in Lemmas~\ref{lm:Q8C4C2_closed}
  and~\ref{lm:Q8Q8C2_closed}.
  (Replace the character $\gamma$ in these proofs by a 
  faithful ideal character of $(C_2)^r\times A$
  with values in $\QQ$.)
  
  If $G$ is abelian or generalized dicyclic,
  but not an elementary abelian $2$-group, 
  then $G$ is not even the affine symmetry group
  of an orbit polytope.
  For elementary abelian $2$-groups
  of order not $4$, $8$, or $16$,
  we constructed in fact 
  orbit polytopes with 
  integer coordinates~\cite[Theorem~9.9]{FrieseLadisch16}.
  For all other groups on the above list,
  Lemma~\ref{lm:concr_aut_gensym} applies. 
  For example, when $G=(PQ)\times B$ as in 
  \ref{it:m_allab}, then we choose for $N$ the unique 
  subgroup containing $\Cent_P(Q)QB$ of index~$p$,
  and $z = g^{p^{c+d-1}}$.
  Notice that $p^c$ divides $(q-1)_p$
  and $(q-1)_p$ divides $p^d$,
  so $z\in \Zent(G)$.
  For $u\in P\cap N$, we have
  $gu=ug$ and $\ord(u) < p^{c+d}$ by assumption,
  so $(gu)^{p^{c+d-1}}=z$.
  For $x\in Q$, we have $(gx)^{p^c}=g^{p^c}$.
  Since $g$ centralizes $B$, we have
  $z\in \erz{gn}$ for all $n\in N$,
  and also $z\in \erz{h}$ for all
  $h\in G\setminus N$.
  Moreover, $z\in \NKer_{\QQ}(G)$ 
  \cite[Lemma~6.11]{ladisch16bpre} and thus 
  Lemma~\ref{lm:concr_aut_gensym} applies.
  The same argument applies to the groups
  in~\ref{it:m_dirprod_h}
  (with $N$ containing $Q_8\times(C_2)^r$).
  If $G$ is as in ~\ref{it:m_nilp},
  then we choose for $N$ the direct product of $A$
  and the abelian subgroup of $S$ from the definition
  of ``generalized dicyclic''.
  (In fact, Lemma~\ref{lm:concr_aut_gensym} applies also
  to generalized dicyclic groups.)
\end{proof}

Thus there are quite a number of groups which can be realized
as symmetry groups of orbit polytopes, but not as symmetry group
of an orbit polytope with rational or integer coordinates.
As an example, consider the group
$G= Q_8 \times C_7$. 
Then it is known that $\QQ G$ is a direct product 
of division rings~\cite{ladisch16bpre,Sehgal75}.
This is essentially due to the fact that 
$\QQ(\eps)$, where $\eps$ is a primitive $7$-th root of unity,
is not a splitting field of the quaternions over $\QQ$.
Equivalently,
$-1$ is not a sum of two squares in $\QQ(\eps)$.
More generally, for a field $\kk $, we have that
$\kk G$ is a direct product of division rings
if and only if $-1$ is not a sum of two
squares in $\kk (\eps)$ \cite[Theorem~4.2]{ladisch16bpre}.
When $\kk G$ is not a direct product of division rings,
then $\kk G$ contains a simple left ideal which is not a twosided ideal
and on which $G$ acts faithfully.
Thus when $\kk \subseteq \reals $,
then $G$ is isomorphic to the affine symmetry group of an orbit 
polytope with vertex coordinates in $\kk $
if and only if 
$-1$ is a sum of two squares in $\kk (\eps)$.
There are many different such fields, 
for example, $\kk =\QQ(\sqrt{2})$ or $\kk =\QQ(\sqrt{5})$,
and also the following fields:
Choose $\alpha$, $\beta\in \reals$
with $\alpha^2 + \beta^2 = 7$
(note that $\alpha$ can be transcendent).
Then $\QQ(\alpha,\beta,\eps)$ is a splitting field
for the quaternions,
because $-7$ is a square in $\QQ(\eps)$.
Thus $G$ is isomorphic to the symmetry group of an orbit
polytope with coordinates in $\kk =\QQ(\alpha, \beta)$.
When $\alpha$ is transcendent, then one can show that
$\QQ(\alpha,\beta)$ contains no algebraic elements.

%
\printbibliography
%
%
%

\end{document}